\newtheorem{thm}{Theorem}[section]
\newtheorem{lemma}[thm]{Lemma}
\theoremstyle{definition}
\newtheorem{remark}[thm]{Remark}
\def\XXint#1#2#3{{\setbox0=\hbox{$#1{#2#3}{\int}$}
         \vcenter{\hbox{$#2#3$}}\kern-.5\wd0}}
\def\e{\varepsilon}
\def\loc{\text{loc}}
\numberwithin{equation}{section}
\begin{document}

\title{Weighted $L^2$ Estimates for Elliptic Homogenization\\
in Lipschitz Domains}

\author{
Zhongwei Shen\thanks{Supported in part by NSF grant DMS-1856235.}}
\date{}
\maketitle

\begin{abstract}

We develop a new real-variable method for weighted $L^p$ estimates.
 The  method is applied to the study of  weighted $W^{1, 2}$ estimates in Lipschitz domains for
weak solutions of second-order elliptic systems in divergence form with bounded measurable coefficients.
It produces a necessary and sufficient condition, which depends on the weight function,
for the weighted $W^{1,2}$ estimate to hold in a fixed  Lipschitz domain with a  given weight.
Using this condition, for elliptic systems in Lipschitz domains with rapidly oscillating, periodic and VMO  coefficients,
we reduce the problem of weighted estimates  to the case of constant coefficients.

\medskip

\noindent{\it Keywords}: weighted estimate; Lipschitz domain;  homogenization.

\medskip

\noindent {\it MR (2020) Subject Classification}: 35B27; 35J57; 42B37.

\end{abstract}

\section{Introduction}\label{section-1}

In this paper we are interested in weighted $L^2$ estimates for  the Dirichlet problem,
\begin{equation}\label{DP-0}
\left\{
\aligned
-\text{\rm div} \big(A(x/\e)\nabla u_\e \big)  & =\text{\rm div} (f)  & \quad & \text{ in } \Omega,\\
u_\e & =0 & \quad & \text{ on } \partial\Omega,
\endaligned
\right.
\end{equation}
where $\e>0$ and $\Omega$ is a bounded Lipschitz domain in $\mathbb{R}^d$.
Throughout we assume that the matrix (tensor) $A(y)=\big(a^{\alpha\beta} _{ij} (y)\big)$,
with $1\le i, j\le d$ and $1\le \alpha, \beta\le m$, is real-valued and
 satisfies   the ellipticity condition,
\begin{equation}\label{ellipticity}
\mu |\xi|^2 \le a^{\alpha \beta}_{ij}(y) \xi^\alpha_i \xi^\beta_j \quad 
\text{ and }
\quad
\| A\|_\infty \le \mu^{-1},
\end{equation}
(the summation convention is used), 
 for a.e. $y\in \mathbb{R}^d$ and all $\xi=(\xi^\alpha_i) \in \mathbb{R}^{m\times d}$, where $\mu>0$.
 We also assume that $A$ is 1-periodic; i.e.,
 \begin{equation}\label{periodicity}
A(y+z) =A(y) \quad 
\text{ for }  y \in \mathbb{R}^d \text{ and } z\in \mathbb{Z}^d.
\end{equation}
 By the energy estimate, 
 $
 \| \nabla u_\e \|_{L^2(\Omega)}
 \le \mu^{-1}  \|f \|_{L^2(\Omega)}.
 $
 It was shown in \cite{Shen-2008-W, Shen-2017-W}
  that if $A$ is continuous (or in  VMO$(\mathbb{R}^d)$) and $\Omega$ is a bounded  $C^1$ domain, then
 \begin{equation}\label{W-p}
 \|\nabla u_\e \|_{L^p(\Omega)}
 \le C_p\,  \| f\|_{L^p(\Omega)}
 \end{equation}
 for $1<p< \infty$, where $C_p$ depends on $d$,  $m$, $p$, $A$ and $\Omega$ 
 (see \cite{Auscher-2002, Byun-Wang-2004} for the case $\e=1$).
 In fact, given an exponent $p>2$,  an elliptic matrix $A$ and a bounded Lipschitz domain $\Omega$,
 a necessary and sufficient condition for the $W^{1, p}$ estimate 
 $\|\nabla u\|_{L^p(\Omega)} \le C_p \| f\|_p$ for
 the Dirichlet problem (\ref{DP-1})  was found by the present author in \cite{Shen-2005}.
 This  condition is given in terms of a (weak) reverse H\"older inequality for local  solutions
 of $\text{\rm div} (A\nabla u)=0$.
 Consequently, for the scalar case $m=1$,
 it was proved in \cite{Shen-2008-W} that if $A\in \text{VMO}(\mathbb{R}^d)$ and is symmetric and $\Omega$ is Lipschitz, then
 (\ref{W-p}) holds for $(3/2)-\gamma < p< 3+\gamma $ if  $d\ge 3$, and for 
 $(4/3)-\gamma  < p< 4+\gamma $ if $d=2$, where $\gamma >0$ depends on $\Omega$.
 The ranges of $p$'s are known to be sharp for Lipschitz domains, even in the case of the Laplacian \cite{Kenig-J-1995}.
 For $m\ge 2$, partial results may be found in \cite{Geng-Song-Shen-2012}.
 Also see related work in \cite{Geng-2012} for the Neumann problem.
 
 In this paper we investigate the weighted  $L^2$ estimate,
 \begin{equation}\label{w-e}
 \int_{\Omega} |\nabla  u_\e |^2 \, \omega \, dx 
 \le C_\omega  \int_\Omega |f|^2\,  \omega \, dx
 \end{equation}
 for solutions of  (\ref{DP-0}).
 Using results in \cite{Shen-2005, Shen-2017-W},
it is not hard to see that if $A$ is in VMO$(\mathbb{R}^d)$ and $\Omega$ is a bounded $C^1$ domain, then the inequality
 (\ref{w-e}) holds if either $\omega$ or $\omega^{-1}$ is an $A_1$ weight.
 We point out that the weighted $L^2$ estimate is closely related to the $W^{1, p}$ estimate
 (\ref{W-p}).
 In particular,  for a given $A$ and  a fixed $\Omega$,
 if  (\ref{w-e}) holds for all $\omega$ in the $A_1$ class, then
  (\ref{W-p}) holds for all $2<p<\infty$, by a general extrapolation result of Rubio de Francia (see e.g. \cite{D-book}).
 In view of  this  close connection, we shall not be interested in conditions for which the weighted estimate (\ref{w-e})
 holds for all weights in the $A_1$ class.
 In fact, such conditions may be found in \cite{Shen-2005, Auscher-2008}.
  Rather, in this paper,
 we shall address the question: 
 {\it Given an $A_1$  weight  $\omega$, an elliptic matrix $A$ and a bounded Lipschitz domain $\Omega$, find 
 a necessary and sufficient condition, which may depend on $\omega$, $\Omega$ and $A$, for  the weighted norm inequality (\ref{w-e})}.
 
 The following two theorems are   the main results of the paper.
 
 \begin{thm}\label{main-thm-1}
 Let $\omega$ be an $A_1$ weight in $\mathbb{R}^d$ and $\Omega$  a bounded Lipschitz domain.
 Let  $A$ a matrix satisfying the ellipticity condition (\ref{ellipticity}).
 The following are equivalent.
 
 \begin{enumerate}
 
 \item
 
 For any $f\in L^\infty (\Omega)$, the weak solution in $H^1_0(\Omega)$  of the Dirichlet problem,
 \begin{equation}\label{DP-1}
 -\text{\rm div}
 \big( A\nabla u) =\text{\rm div}(f)
 \quad \text{ in }  \Omega \quad
 \text{ and } \quad
 u=0 \quad \text{ on } \partial\Omega,
 \end{equation}
  satisfies the estimate,
  \begin{equation}\label{w-e-1}
  \int_\Omega |\nabla u|^2 \omega\, dx \le C_1 \int_\Omega |f|^2\omega\, dx.
  \end{equation}
 
 \item
 
 Let $B=B(x_0, r)$, where either $4B \subset \Omega$, or
 $x_0 \in \partial\Omega$ and $0< r< c_0\,  \text{\rm diam}(\Omega)$.
 Let $u\in H^1(4B\cap \Omega)$ be a weak solution of
 $\text{\rm div} (A\nabla u)=0$ in $4B\cap \Omega$ with
 $u=0$ on $4B \cap \partial\Omega$ (in the case $x_0 \in \partial\Omega)$.
 Then
 \begin{equation}\label{cond-1}
 \fint_{B\cap\Omega }
 |\nabla u|^2\,  \omega \, dx
 \le C_2 \fint_{2B \cap \Omega}
 |\nabla u|^2\, dx
 \fint_{B}\omega.
 \end{equation}
 
 \end{enumerate}
 \end{thm}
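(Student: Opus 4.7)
The equivalence is established by treating the two implications separately.

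$(2)\Rightarrow(1)$ is the main direction, and I plan to deduce it via the weighted real-variable method developed in the earlier sections of the paper. Given $u\in H^1_0(\Omega)$ solving (\ref{DP-1}), I decompose $u=u_B+v_B$ on each admissible ball $B=B(x_0,r)$ (either $4B\subset\Omega$ or $x_0\in\partial\Omega$), where $u_B\in H^1_0(4B\cap\Omega)$ solves $-\text{\rm div}(A\nabla u_B)=\text{\rm div}(f\chi_{4B\cap\Omega})$ with zero trace on $\partial(4B\cap\Omega)$, and $v_B=u-u_B$ satisfies $\text{\rm div}(A\nabla v_B)=0$ in $4B\cap\Omega$ with $v_B=0$ on $4B\cap\partial\Omega$ in the boundary case. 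The standard energy estimate controls $\fint_{4B\cap\Omega}|\nabla u_B|^2$ by $\fint_{4B\cap\Omega}|f|^2$, while the hypothesis (\ref{cond-1}) applied to $v_B$ yields
\begin{equation*}
\fint_{B\cap\Omega}|\nabla v_B|^2\,\omega\,dx \le C_2 \fint_{2B\cap\Omega}|\nabla v_B|^2\,dx \cdot \fint_B\omega\,dx.
\end{equation*}
These two local pieces of information are exactly the input required by the weighted real-variable lemma, whose conclusion is the global weighted estimate (\ref{w-e-1}).

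For $(1)\Rightarrow(2)$, I test (\ref{w-e-1}) on a configuration built from the local solution $u$. Choose $\eta\in C_c^\infty(3B)$ with $\eta\equiv 1$ on $2B$ and $|\nabla\eta|\le C/r$; take $c=0$ in the boundary case and $c$ a weighted mean of $u$ over $3B$ in the interior case, so that $\eta(u-c)\in H^1_0(\Omega)$. Let $v\in H^1_0(\Omega)$ solve the Dirichlet problem $-\text{\rm div}(A\nabla v)=-\text{\rm div}(A(u-c)\nabla\eta)$. Using $\text{\rm div}(A\nabla u)=0$ in $4B\cap\Omega$, the remainder $w:=v-\eta(u-c)$ lies in $H^1_0(\Omega)$ and satisfies $-\text{\rm div}(A\nabla w)=\text{\rm div}(\eta A\nabla u)$, with $\nabla u=\nabla v-\nabla w$ on $B$. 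Applying (\ref{w-e-1}) to both $v$ and $w$, followed by the weighted Poincar\'e inequality (valid since $\omega\in A_1\subset A_2$), yields the weighted doubling $\int_{B\cap\Omega}|\nabla u|^2\,\omega\,dx\le C\int_{3B\cap\Omega}|\nabla u|^2\,\omega\,dx$. Combined with Meyers' reverse H\"older estimate for $\nabla u$ and the reverse H\"older property of the $A_1$ weight $\omega$, H\"older's inequality with carefully matched exponents converts the weighted right-hand side into the unweighted form $\fint_{2B\cap\Omega}|\nabla u|^2\,dx\cdot\fint_B\omega\,dx$ of (\ref{cond-1}); if the Meyers exponent is insufficient for the $A_1$-reverse-H\"older exponent of $\omega$, a preliminary self-improvement of (\ref{w-e-1}) to an $L^{2+\varepsilon}(\omega)$ bound precedes the H\"older step.

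The main obstacle lies in the direction $(2)\Rightarrow(1)$: the hypothesis (\ref{cond-1}) is \emph{mixed} in character, controlling a weighted $L^2$ average by an unweighted $L^2$ average times the local mass of $\omega$, so the underlying real-variable lemma must accept this hybrid input and still produce a genuine weighted output. The stopping-time decomposition must balance the unweighted gradient norm against the $A_1$ constant of $\omega$ so that losses across dyadic scales do not accumulate -- this is the principal new technical ingredient of the paper.
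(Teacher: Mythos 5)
Your overall architecture for $(2)\Rightarrow(1)$ matches the paper's (decompose $u$ near each ball into a locally forced piece controlled by the energy/Meyers estimate and a locally $A$-harmonic piece controlled by the hypothesis, then invoke the boundary real-variable theorem), but there is a gap at the crucial step: the real-variable theorem requires a \emph{weighted $L^{p_1}$ bound with $p_1>2$ strictly} for the residual part $R_B=|\nabla v_B|$ --- the good-$\lambda$ iteration degenerates at $p_1=p=2$ (in the proof of Theorem \ref{real-thm} one must choose $\theta$ with $C\theta^{p_1/p}N_2^{p_1}<\theta/6$, which is only possible because $p_1>p$). The hypothesis (\ref{cond-1}) as stated gives only the endpoint $p_1=2$, so it is \emph{not} ``exactly the input required''; one must first upgrade (\ref{cond-1}) to a self-improved reverse H\"older inequality for the function $|(\nabla u)\omega^{1/2}|$ with some exponent $p_1>2$ (this is the content of Lemma \ref{lemma-m-1}, via Gehring's lemma and the $A_1$ condition). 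This is fixable, but it is an essential missing step, not a routine detail.

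The direction $(1)\Rightarrow(2)$ as you describe it does not work. Writing both right-hand sides in divergence form and applying (\ref{w-e-1}) to $v$ and $w$ yields, after the weighted Poincar\'e inequality, only $\int_{B\cap\Omega}|\nabla u|^2\omega\le C\int_{3B\cap\Omega}|\nabla u|^2\omega$, which is vacuous ($B\subset 3B$) and carries no self-improving information: the term $\eta A\nabla u$ feeds the full weighted $L^2$ gradient norm back into the right-hand side with no gain in exponent. The subsequent attempt to split $\int_{3B}|\nabla u|^2\omega$ into $\fint_{2B}|\nabla u|^2\cdot\fint_B\omega$ by H\"older, Meyers and the reverse H\"older property of $\omega$ cannot succeed for a general $A_1$ weight: the Meyers exponent is a fixed small improvement over $2$ depending only on $d$, $m$, $\mu$ and $\Omega$, while the reverse H\"older exponent of $\omega$ can be arbitrarily close to $1$; and if this step did go through it would establish (\ref{cond-1}) \emph{without using hypothesis (1) at all}, contradicting the fact that condition (2) genuinely fails for some $A_1$ weights on Lipschitz domains. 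The paper's actual route is different in a way that matters: from (\ref{w-e-1}) one first derives, by duality and the weighted Sobolev inequality, an estimate for non-divergence right-hand sides, $\|\nabla u\|_{L^2(\omega)}\le C\|F\|_{L^{q'}(\omega^{q'/2})}$ with $q'=\frac{2d}{d+2}<2$ (with a modification for $d=2$). Keeping the commutator term $A\nabla u\cdot\nabla\varphi$ in non-divergence form and feeding it into this estimate produces a genuine reverse H\"older inequality for $|(\nabla u)\omega^{1/2}|$ with the strictly smaller exponent $\frac{2d}{d+2}$ on the right; a convexity/absorption argument in the cutoff radii then gives $\bigl(\fint_{B\cap\Omega}|(\nabla u)\omega^{1/2}|^2\bigr)^{1/2}\le C\fint_{2B\cap\Omega}|(\nabla u)\omega^{1/2}|$, and (\ref{cond-1}) follows from this by Cauchy--Schwarz. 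The lowering of the exponent via the weighted Sobolev inequality is the mechanism your proposal is missing.
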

 
 \begin{thm}\label{main-thm-2}
 Let $\omega$ be an $A_1$ weight  and  $\Omega$  a bounded Lipschitz domain.
Let  $A$  a matrix satisfying (\ref{ellipticity}), (\ref{periodicity}) and $A\in \text{\rm VMO} (\mathbb{R}^d)$.
 Suppose that the inequality  (\ref{w-e-1}) holds for weak solutions  in $H^1_0(\Omega)$ of
 $-\text{\rm div}(\overline{A} \nabla u)=\text{\rm div}(f)$ in $\Omega$, where $f\in L^\infty(\Omega)$ and
 the constant matrix $\overline{A}$ is either the homogenized matrix of
 $A$ or obtained from $A$ by averaging over a ball.
 Then the  weighted  inequality (\ref{w-e}) holds, uniformly in $\e>0$, for any weak solution of (\ref{DP-0}).
  \end{thm}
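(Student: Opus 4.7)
\textbf{Proof plan for Theorem \ref{main-thm-2}.} By Theorem \ref{main-thm-1}, the weighted estimate (\ref{w-e}) for $u_\e$ is equivalent to the local inequality (\ref{cond-1}) holding for weak solutions of $\text{\rm div}(A(x/\e)\nabla u) = 0$ in $4B\cap \Omega$ (with vanishing trace on $4B\cap \partial\Omega$ in the boundary case), with a constant independent of $\e$. Conversely, the hypothesis combined with the (already-established) implication $(1)\Rightarrow(2)$ of Theorem \ref{main-thm-1} furnishes (\ref{cond-1}) for weak solutions of the constant-coefficient operators $\text{\rm div}(\overline{A}\nabla v)=0$ (both for $\overline A$ the homogenized matrix and $\overline A$ a ball-average of $A$). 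The entire plan is thus to transfer (\ref{cond-1}) from constant coefficients to $A(x/\e)$ via a comparison argument, split according to the ratio $r/\e$ where $r$ is the radius of $B$.

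\textbf{Large scale: $r\ge \e$.} Let $v$ solve $\text{\rm div}(\overline A \nabla v)=0$ in $2B\cap\Omega$ with the same Dirichlet data as $u$ on $\partial(2B\cap \Omega)$ (vanishing on $2B\cap \partial\Omega$ in the boundary case). Quantitative $H^1$ homogenization, using first-order periodic correctors with a boundary-layer smoothing cutoff, yields
\[
\Big(\fint_{2B\cap\Omega} |\nabla u-\nabla v|^2\, dx\Big)^{1/2}
\le C\, (\e/r)^{\tau}\Big(\fint_{4B\cap\Omega}|\nabla u|^2\, dx\Big)^{1/2}
\]
for some $\tau>0$ independent of $\e$.

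\textbf{Small scale: $r< \e$.} Choose $\widetilde A=\fint_{Q} A$, where $Q$ is a ball in the $y$-variable of radius $\asymp r/\e$ centered at $x_0/\e$, and let $v$ solve $\text{\rm div}(\widetilde A\nabla v)=0$ in $2B\cap \Omega$ with the same boundary data as $u$. By the $\text{\rm VMO}$ hypothesis (together with $1$-periodicity of $A$), $\fint_{Q}|A-\widetilde A|^2\to 0$ as $r/\e\to 0$ uniformly in $x_0/\e$, and a standard energy identity for $u-v$ then produces the same type of smallness estimate $\|\nabla u-\nabla v\|_{L^2(2B\cap\Omega)}\le o(1)\cdot \|\nabla u\|_{L^2_{\text{loc}}(4B\cap\Omega)}$, where $o(1)\to 0$ as $r/\e\to 0$.

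\textbf{Weighted absorption and conclusion.} In either case, split $|\nabla u|^2\le 2|\nabla v|^2+2|\nabla u-\nabla v|^2$ on $B\cap\Omega$ and integrate against $\omega$. The $v$-term is controlled directly by (\ref{cond-1}) for the constant-coefficient operator applied to $v$, giving $\fint_{2B\cap\Omega}|\nabla v|^2\,dx\cdot\fint_B\omega$, which in turn is bounded by the right-hand side of (\ref{cond-1}) for $u$ using $\|\nabla v\|_{L^2}\le \|\nabla u\|_{L^2}+\|\nabla u-\nabla v\|_{L^2}$ and the smallness of the last term. For the error term, since $\omega\in A_1\subset A_\infty$ we have a reverse H\"older inequality $(\fint_B \omega^s)^{1/s}\le C\fint_B\omega$ for some $s>1$, so H\"older yields
\[
\fint_{B\cap\Omega}|\nabla u-\nabla v|^2 \omega\, dx
\le \Big(\fint_{B\cap\Omega}|\nabla u-\nabla v|^{2s'}\Big)^{1/s'}\cdot C\fint_B \omega.
\]
A Meyers-type self-improvement (for $u$ uniformly in $\e$, and for $v$ from classical regularity for the constant-coefficient operator in the Lipschitz domain) upgrades the $L^2$ comparison of the two previous paragraphs to an $L^{2s'}$ comparison with a slightly worse but still positive power $(\e/r)^{\tau'}$ (resp.\ $o(1)$). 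Summing over the trivial range $\e/r\gtrsim 1$ (where $r<\e$) and $\e/r\lesssim 1$ absorbs this factor and produces (\ref{cond-1}) for $u$ uniformly in $\e$.

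\textbf{Main obstacle.} The chief technical difficulty is making the homogenization comparison in the large-scale regime uniform in $\e$ when $B$ is a \emph{boundary} ball in a merely Lipschitz domain: correctors must be introduced with a boundary-layer cutoff whose contribution is controlled by $(\e/r)^{\tau}$ without assuming any smoothness of $\partial\Omega$. Equally delicate is the choice of the self-improvement exponent $2s'$: $s$ depends on the weight $\omega$, and one must ensure $2s'$ lies in a range where both the $\e$-problem (uniformly in $\e$) and the constant-coefficient problem admit local $W^{1,2s'}$ estimates in $\Omega$. The latter is precisely what the hypothesized weighted inequality for $\overline A$, reread through Theorem \ref{main-thm-1}, is designed to supply, so the argument ultimately closes by using the constant-coefficient hypothesis \emph{twice}: once for $v$ at exponent $2$, and once, implicitly, to license the exponent $2s'$.
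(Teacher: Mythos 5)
Your high-level plan (run the hypothesis through Theorem \ref{main-thm-1} to get the weighted reverse H\"older condition (\ref{cond-1}) for the constant-coefficient operators, then transfer it to $A(x/\e)$ by comparison, split at scale $\e$) is the paper's plan. But two steps in your execution fail as written.

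First, the large-scale comparison estimate is false as stated. If $v=u_0$ solves the homogenized equation with the same data, then $\nabla u_\e-\nabla u_0$ converges to $0$ only weakly; its $L^2$ norm is generically of order one, since $\nabla u_\e \approx \nabla u_0+(\nabla\chi)(x/\e)\nabla u_0$ and the corrector gradient does not vanish. The correct quantitative statement (Lemmas \ref{app-lemma-5} and \ref{app-lemma-6} in the paper) bounds $\|\nabla u_\e-\nabla u_0-(\nabla\chi)^\e\eta_\e(\nabla u_0)\|_{L^2}$ by $C(\e/r)^\kappa\|\nabla u_\e\|_{L^2}$. Consequently the ``good'' term you must estimate in weighted $L^{p_1}$ is $|\nabla u_0+(\nabla\chi)^\e\eta_\e(\nabla u_0)|$, not $|\nabla u_0|$, and handling the factor $(\nabla\chi)^\e$ requires the $L^q_{\loc}$ bounds on $\nabla\chi$ (from the VMO hypothesis) together with the weighted reverse H\"older inequality (\ref{eqv-1}) for $\nabla u_0$ at an exponent $p_2$ strictly larger than $p_1$, combined by H\"older. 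Your proposal has no mechanism for this term.

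Second, your absorption step does not close. Writing $\fint_{B}|\nabla u-\nabla v|^2\omega\le(\fint_B|\nabla u-\nabla v|^{2s'})^{1/s'}\,C\fint_B\omega$ requires unweighted $L^{2s'}$ control of the error, where $s>1$ is the reverse H\"older exponent of $\omega$. For a general $A_1$ weight $s$ can be arbitrarily close to $1$, so $2s'$ is arbitrarily large; but uniform $W^{1,q}$ estimates for large $q$ are false in Lipschitz domains (the sharp range is $3+\gamma$ for $d\ge 3$ even for the Laplacian), and the hypothesized \emph{weighted $L^2$} estimate for $\overline A$ cannot ``license'' them. This is exactly the obstruction the paper's real-variable theorem (Theorem \ref{real-thm-b}) is built to avoid: the error term $F_B$ only needs to be small in \emph{unweighted} $L^{p_0}$ with a fixed Meyers exponent $p_0<2$ (this is the small parameter $\eta$), while the weight is carried solely by the good term $R_B$ in $L^{p_1}$; the absorption is then performed by the good-$\lambda$/Calder\'on--Zygmund iteration of Theorem \ref{real-thm-b} (together with the $\e$-truncated maximal function of Remark \ref{remark-L-b} to restrict to balls of radius $\ge\e$), not by a single-scale H\"older splitting. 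Without that machinery, or a substitute for it, the argument does not go through.
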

 
 For any  fixed $A_1$ weight $\omega$, any fixed Lipschitz domain $\Omega$ and any given  elliptic matrix $A$,
 Theorem \ref{main-thm-1} gives  a necessary and sufficient condition for the weighted norm inequality (\ref{w-e-1}).
 To the author's best knowledge, this condition (\ref{cond-1}), which depends on the weight $\omega$,
   is new even for the Laplacian. 
 Theorem \ref{main-thm-2} reduces the weighted estimate for the elliptic operator 
$\mathcal{L}_\e =-\text{\rm div}(A(x/\e)\nabla )$ with rapidly oscillating coefficients 
to the same estimate for elliptic operators with constant coefficients.
By combining these two theorems we see that, to establish the weighted $L^2$ estimate (\ref{w-e}) for
the operator $\mathcal{L}_\e$,
it suffices to verify the condition (\ref{cond-1}) for local solutions of
$\text{\rm div}(\overline{A}\nabla u)=0$, where $\overline{A}$ is either the homogenized matrix of $A$ or
obtained from $A$ by averaging over a ball.

 One of our motivations for studying (\ref{w-e})  lies in a  special case,
 \begin{equation}\label{sw}
 \omega=\omega_\sigma (x)  =[\text{\rm dist}  (x, \partial\Omega) ]^\sigma.
 \end{equation}
 Note that $\omega_\sigma \in A_1(\mathbb{R}^d)$ if  $-1< \sigma\le 0$ and $\Omega$ is Lipschitz.
 The weighted inequality (\ref{w-e}) for this  special case arises in the study of the quantitative  homogenization theory  
 and provides  useful estimates for boundary layers   \cite{KLS-2012} as well
 as control  of solutions at infinite for unbounded domains \cite{Shen-Zhuge-2018}.
As a corollary of Theorem \ref{main-thm-2}, we obtain the following.

\begin{thm}\label{main-thm-3}
Let $\Omega$ be a bounded Lipschitz domain in $\mathbb{R}^d$.
Suppose  that $A$  satisfies conditions (\ref{ellipticity})-(\ref{periodicity}) and that
$A\in \text{\rm VMO}(\mathbb{R}^d)$.
In the case $m\ge 2$ we also assume that $A^*=A$, i.e., $a_{ij}^{\alpha\beta}
=a_{ji}^{\beta\alpha}$.
Let $u_\e$ be a weak solution of (\ref{DP-0}).
Then for any $-1 < \sigma< 1$, 
\begin{equation}\label{main-3}
\int_\Omega 
|\nabla u_\e |^2 \big[ \text{\rm dist} (x, \partial\Omega) \big]^\sigma \, dx
\le C_\sigma
\int_\Omega
| f|^2 \big[ \text{\rm dist} (x, \partial\Omega) \big]^\sigma \, dx,
\end{equation}
where  
$C_\sigma$ depends only on $d$, $m$, $A$,  $\sigma$ and the Lipschitz character of $\Omega$.
\end{thm}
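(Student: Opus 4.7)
The idea is to combine Theorems~\ref{main-thm-2} and~\ref{main-thm-1} to reduce to a local reverse-H\"older-type condition for a constant coefficient operator in the Lipschitz domain $\Omega$, handle the subrange $-1<\sigma\le 0$ in which $\omega_\sigma$ is an $A_1$ weight, and extend to $0\le\sigma<1$ by a duality argument. First, by Theorem~\ref{main-thm-2} applied with $\omega=\omega_\sigma$, it suffices to establish the weighted estimate
\[
\int_\Omega|\nabla v|^2\omega_\sigma\,dx \le C\int_\Omega|f|^2\omega_\sigma\,dx
\]
for weak $H^1_0$-solutions of $-\mathrm{div}(\bar A\nabla v)=\mathrm{div}(f)$, where $\bar A$ is a constant matrix equal either to the homogenized matrix of $A$ or to an average of $A$ over a ball. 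Both constructions preserve ellipticity and, in the system case where $A^*=A$, the symmetry $\bar A^*=\bar A$.

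For $-1<\sigma\le 0$, the weight $\omega_\sigma$ belongs to $A_1(\R^d)$ for any Lipschitz $\Omega$. Theorem~\ref{main-thm-1} then reduces the weighted estimate for $\bar A$ to the local condition (\ref{cond-1}) for weak solutions $u$ of $\mathrm{div}(\bar A\nabla u)=0$ in $4B\cap\Omega$, with $u=0$ on $4B\cap\partial\Omega$ in the boundary case. For interior balls $4B\subset\Omega$, the weight $\omega_\sigma$ is comparable to a constant on $2B$ (the ratio of the maximum to the minimum of $\mathrm{dist}(\cdot,\partial\Omega)$ on $2B$ is bounded by the Whitney inequality), so (\ref{cond-1}) reduces to the trivial bound $\fint_B|\nabla u|^2\le C\fint_{2B}|\nabla u|^2$. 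For boundary balls, I would combine the boundary reverse H\"older inequality for $\nabla u$, a consequence of the sharp $W^{1,q}$-theory for constant coefficient (symmetric) elliptic operators in Lipschitz domains from \cite{Shen-2005,Shen-2008-W}, with H\"older's inequality and the elementary reverse H\"older class of $\omega_\sigma$ on boundary balls, namely $\fint_B\omega_\sigma^s\approx r^{s\sigma}$ for any $s$ with $s\sigma>-1$. When the sharp boundary Meyers exponent is not large enough to reach $\sigma$ close to $-1$ by the naive H\"older step, I would refine using a Whitney decomposition of $B\cap\Omega$: on each Whitney cube $\omega_\sigma$ is essentially constant, interior regularity of $\nabla u$ is available of arbitrary order, and boundary H\"older continuity of $u$ allows the weighted sum to converge.

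For $0\le \sigma<1$, one has $\omega_\sigma^{-1}=\omega_{-\sigma}\in A_1$, so I would invoke duality. The solution operator $T_{\bar A}\colon L^2(\Omega;\R^d)\to L^2(\Omega;\R^d)$, $f\mapsto \nabla v$, has adjoint $(T_{\bar A})^*=T_{\bar A^T}$ with respect to the unweighted $L^2$-pairing, by integration by parts in both Dirichlet problems. In the system case $\bar A^*=\bar A$ makes $T_{\bar A}$ self-adjoint; in the scalar case $\bar A^T$ satisfies the same hypotheses as $\bar A$ and the previous paragraph applies to it. Applying the previous paragraph to $\bar A^T$ with weight $\omega_{-\sigma}\in A_1$ gives boundedness of $T_{\bar A^T}$ on $L^2(\omega_{-\sigma})$, and the duality $L^2(\omega_\sigma)^*=L^2(\omega_{-\sigma})$ via the unweighted pairing transfers this to boundedness of $T_{\bar A}$ on $L^2(\omega_\sigma)$, as required.

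The main obstacle is the verification of (\ref{cond-1}) for boundary balls as $\sigma\to -1$. The naive H\"older interpolation between the boundary $W^{1,q}$-exponent in \cite{Shen-2008-W} (bounded by $3+\gamma$) and the reverse H\"older class of $\omega_\sigma$ would require the Meyers exponent to exceed $2/(1-|\sigma|)$, which is not guaranteed. The Whitney-cube refinement together with boundary H\"older estimates for $u$ is where the bookkeeping must be performed carefully to recover the full range $\sigma\in(-1,1)$ stated in the theorem.
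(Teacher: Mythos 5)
Your reduction scheme is the same as the paper's: apply Theorem~\ref{main-thm-2} to pass to a constant matrix $\overline{A}$, restrict to $-1<\sigma\le 0$ where $\omega_\sigma\in A_1$, verify condition (\ref{cond-1}) via Theorem~\ref{main-thm-1}, and recover $0\le\sigma<1$ by duality. The gap is precisely at the point you flag as ``the main obstacle'': the verification of (\ref{cond-1}) for boundary balls when $\sigma$ is close to $-1$. Your proposed fix --- a Whitney decomposition of $B\cap\Omega$ combined with boundary H\"older continuity of $u$ --- does not work in the stated generality. Boundary H\"older continuity gives $|u(x)|\le C[\mathrm{dist}(x,\partial\Omega)]^\alpha$ and hence, via interior estimates on Whitney cubes, $|\nabla u(x)|\lesssim [\mathrm{dist}(x,\partial\Omega)]^{\alpha-1}$; summing $|\nabla u|^2\omega_\sigma$ over the Whitney cubes then converges only when $2\alpha-2+\sigma>-1$, i.e.\ $\sigma>1-2\alpha$. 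In a general Lipschitz domain the exponent $\alpha$ can be arbitrarily small, so this pointwise route cannot reach the full range $\sigma\in(-1,0)$; it suffers from the same kind of loss as the naive Meyers interpolation you correctly reject.

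The ingredient you are missing is the $L^2$ Rellich (regularity) estimate for constant-coefficient symmetric systems in Lipschitz domains, phrased through the nontangential maximal function: for $u$ with $\mathrm{div}(\overline{A}\nabla u)=0$ in $4B\cap\Omega$ and $u=0$ on $4B\cap\partial\Omega$,
\begin{equation*}
\int_{B\cap\partial\Omega}\bigl|(\nabla u)^*_{cr}\bigr|^2\,d\sigma\le \frac{C}{r}\int_{2B\cap\Omega}|\nabla u|^2\,dx .
\end{equation*}
This is an averaged $L^2$ statement on the boundary, not a pointwise decay statement, and it is exactly where the hypothesis $A^*=A$ for $m\ge 2$ enters (your proposal never uses the symmetry in the core estimate, which is a sign something is off). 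With it, one splits $B\cap\Omega$ into the collar of width $cr$ and the rest: in the collar one dominates $|\nabla u|$ by the nontangential maximal function and integrates $\int_0^{cr}s^\sigma\,ds\approx r^{1+\sigma}$ in the normal direction, which converges for every $\sigma>-1$; away from the collar $\omega_\sigma\lesssim r^\sigma$. Both terms are then bounded by $Cr^\sigma\fint_{2B\cap\Omega}|\nabla u|^2\approx \fint_{2B\cap\Omega}|\nabla u|^2\cdot\fint_B\omega_\sigma$, which is (\ref{cond-1}). The rest of your argument (interior balls, preservation of ellipticity and symmetry under averaging, duality between $\omega_\sigma$ and $\omega_{-\sigma}$) is fine and agrees with the paper.
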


\begin{remark}\label{remark-0}
Consider the scalar case $m=1$.
It follows from \cite{Shen-2005, Shen-2008-W} 
 that if $A$ is in VMO$(\mathbb{R}^d)$ and $\Omega$ is Lipschitz,
then (\ref{w-e}) holds for $\omega=( \widetilde{\omega}) ^\sigma$, where $\widetilde{\omega} \in A_1(\mathbb{R}^d)$ and
$|\sigma|<\frac{1}{3} +\gamma $ for $d\ge 3$, and $|\sigma| < \frac12 +\gamma  $ for $d=2$,
where $\gamma>0$ depends on $\Omega$.
The ranges of $\sigma$  are sharp for Lipschitz domains.
This result would only yield  (\ref{main-3}) for $|\sigma |< \frac13 + \gamma$ if $d\ge 3$, and
for $|\sigma|< \frac12 +\gamma$ if $d=2$.
Thus, even though the weighted inequality (\ref{w-e}) may not be true for all weights in the $A_1$ class,
the inequality (\ref{main-3})  still holds in Lipschitz domains for the full range of  possible $\sigma$'s.
We also note that without any smoothness and periodicity  conditions on $A$,
 (\ref{main-3})  holds for $|\sigma|<  \kappa$, where
 $\kappa>0$ depends on $d$, $m$, $\mu$ and the Lipschitz character of  $\Omega$.
 See Theorem \ref{g-thm}.

\end{remark}

Our approach to Theorems \ref{main-thm-1} and \ref{main-thm-2} is based on a new real-variable 
method for establishing weighted $L^2$ estimates.
In \cite{Shen-2005, Shen-2006, Shen-2007-L} we developed a real-variable method for establishing $L^p$
estimates (also see related work in \cite{Auscher-2007}).
The method, which is originated in \cite{CP-1998} (also see \cite{Wang-2003, Byun-Wang-2004}),
is particularly effective in the non-smooth settings, where the $L^p$ estimates are  expected  only for $p$
in some limited ranges.
The basic idea  is that  to prove the $L^p$ estimate  for a function $F$, where $p>2$,
for each small ball $B$, one decomposes $F$ into two parts,  $F_B$ and $R_B$, both depending on $B$.
For $F_B$, one establishes an $L^2$ estimate with a small parameter $\eta$ for the term involving $F$.
For $R_B$  one proves an $L^q$ estimate for some $q>p$ .
We point out that the proof is based on a good-$\lambda$ inequality.
As such, there is a direct extension of this method to the weighted setting, which has been exploited in \cite{Shen-2005, Auscher-2008}.
The main novelty of this paper is that instead of requiring an $L^q$ estimate with $q>p$ for the function $R_B$,
we require $R_B$ to satisfy a weighted estimate in $L^{p_1}$ for some $p_1>2$.
See Theorems \ref{real-thm} and \ref{real-thm-b}.
As a result, instead of conditions for weighted estimates for a whole class of weights,
our conditions for weighted estimates are for each individual weight $\omega$.
In particular, we remark that the  result in Theorem \ref{main-thm-3} does  not  seem to be accessible by the 
methods used in \cite{Shen-2005, Auscher-2008}.
We expect this insight to be useful in other related problems.
 
 The paper is organized as follows.
 In Section \ref{section-real} we present a general real-variable method, described above,
 for weighted $L^p$  estimates, where $0<p<\infty$.
 See Theorem \ref{real-thm}. 
  In Section \ref{section-real-R} we apply the real-variable method in Section \ref{section-real}
 to sublinear operators in $\mathbb{R}^d$, including linear operators of Calder\'on-Zygmund type.
In Section \ref{section-main-1} we use a boundary version of Theorem \ref{real-thm} for  a Lipschitz domain 
to prove  Theorem \ref{main-thm-1}.
Sections \ref{section-small} and \ref{section-large}
contain  the proof of Theorem \ref{main-thm-2}.
We point out that the small parameter $\eta$ in Theorems \ref{real-thm} and \ref{real-thm-b} is particularly useful for
perturbation arguments.
For local estimates ($\e=1$), as in the study  of $W^{1, p}$ estimates,
 a  perturbation argument reduces the case of VMO coefficients to the case
of constant coefficients.
See Section \ref{section-small}.
For large-scale estimates in homogenization, a similar perturbation with the use of the parameter $\eta$ as well as
convergence rates   allows us to reduce the problem to the same estimates for  the homogenized operator.
See Section \ref{section-large}.
Finally, Theorem \ref{main-thm-3} is proved in Section \ref{section-sp}.
We remark that the periodic structure of $A$  is not essential for Theorems \ref{main-thm-2} and
\ref{main-thm-3}, as long as $|\nabla \chi|\in L^q_{\loc}$ uniformly for any $q>2$,
where $\chi$ denotes the corrector.

We will use $C$ and $c$ to denote constants that may depend on $d$, $m$, $\mu$, and the Lipschitz 
character of $\Omega$.
If a constant also depends on other parameters, it will be stated explicitly.
We use $\fint_E u$ to denote the average of $u$, with respect to the Lebesgue measure,  over the set $E$; i.e. 
$$
\fint_E u =\frac{1}{|E|} \int_E u.
$$
  
%%%%%%%%%%%%%%%%%%%%%%%%%%%%

\section{A real-variable method for weighted  estimates}\label{section-real}

We begin with a brief review of $A_p$ weights and refer the reader to \cite{D-book} for a detailed presentation.
For $1<p<\infty$,
a nonnegative  function  $\omega\in L^1_{\loc} (\mathbb{R}^d)$ is called an $A_p$ weight, 
denoted by $\omega\in A_p (\mathbb{R}^d)$,  if
there exists a constant  $C_\omega \ge 1$ such that
\begin{equation}\label{A-p}
\fint_B \omega
\cdot \left(\fint_B \omega^{-\frac{1}{p-1}} \right)^{p-1}
\le C_\omega \quad \text{ for any ball  } B \subset\mathbb{R}^d.
\end{equation}
In the case $p=1$, the condition (\ref{A-p}) is replaced by
\begin{equation}\label{A-1}
\fint_{B} \omega \le C_\omega  \inf_B \omega \quad \text{ for any ball } B \subset \mathbb{R}^d.
\end{equation}
We will refer to the smallest $C_\omega$ for which  (\ref{A-p}) (or  (\ref{A-1}) for $p=1$) holds as the $A_p$ constant of $\omega$.
It follows by  H\"older's inequality that
$A_p(\mathbb{R}^d)\subset A_q(\mathbb{R}^d)$ if $p<q$.
It is also known that if $\omega\in A_q(\mathbb{R}^d)$ for some $q>1$,
then $\omega\in A_p(\mathbb{R}^d) $ for some $p<q$ ($p$ depends on $\omega$).
A function $\omega$ is called an $A_\infty$ weight if it is an $A_p$ weight for some
$p\ge 1$.
An $A_\infty$ weight satisfies  the doubling condition,  
\begin{equation}\label{doubling}
\omega(2B) \le C\,  \omega (B) \quad \text{  for any ball }
B\subset \mathbb{R}^d,
\end{equation}
 where we have used the notation 
 $\omega(E)= \int_E \omega$.
Moreover,  if   $\omega$ is an $A_\infty$ weight, then there exist $\sigma \in (0, 1)$ and $C>0$ such that
\begin{equation}\label{A-infty}
\frac{\omega (E)}{\omega(B)}
\le C \left(\frac{|E|}{|B|}\right)^\sigma,
\end{equation}
where $E\subset B$ is measurable and $B$ is a ball, and that
\begin{equation}\label{RH}
\left(\fint_B \omega^{1+\sigma} \right)^{\frac{1}{1+\sigma}}
\le C \fint_B \omega.
\end{equation}
Using the doubling condition, it is not hard to see that
balls $B$ in  (\ref{A-p}), \ref{A-1}), (\ref{doubling}), (\ref{A-infty}) and (\ref{RH}) 
may be replaced by cubes $Q$.

The goal of this section is to prove the following theorem.

\begin{thm}\label{real-thm}
Let $0< p_0<p<p_1<\infty$ and $\omega$ be an $A_{p/p_0}  $ weight in $\mathbb{R}^d$.
Let $F\in L^p(4B_0)$ and $f\in L^p(4B_0)$,
where $B_0$ is a ball in $\mathbb{R}^d$.
Suppose that for each ball $B\subset 2B_0$ with $|B|\le c_1 |B_0|$,
there exist two functions $F_B$ and $R_B$,  defined on $2B$, such that
$|F|\le |F_B| + |R_B|$ on $2B$, and that
\begin{align}
\left(\fint_{2B} |F_B|^{p_0}\right)^{1/p_0}
 & \le N_1 \sup _{4B_0\supset B^\prime \supset B}
\left(\fint_{B^\prime} |f|^{p_0}  \right)^{1/p_0}
+ \eta \left(\fint_{100B} |F|^{p_0} \right)^{1/p_0}, \label{real-1}\\
\left(\fint_{2B}
|R_{B}|^{p_1} \omega\, dx \right)^{1/p_1}
 & \le N_2
\left\{ 
\left(\fint_{100B} |F|^{p_0}\right)^{1/p_0}
+ \sup_{4B_0 \supset B^\prime\supset B}
\left(\fint_{B^\prime}
|f|^{p_0}\right)^{1/p_0} \right\}
\left(\fint_{B}  \omega \right)^{1/p_1},
\label{real-2}
\end{align}
where  $N_1, N_2>1$, $0<c_1<1$,
and $\eta\ge 0$.
Then there exists $\eta_0>0$, depending
only on $d$, $p_0$, $p_1$, $p$,  $c_1$,  $N_2$, and the $A_{p/p_0}$ constant of $\omega$, with the property that if
$0\le \eta\le  \eta_0$, then
\begin{equation}\label{real-3}
\left( 
\fint_{B_0} |F|^p \omega\, dx \right)^{1/p}
\le C \left(\fint_{4B_0} |f|^p \omega\right)^{1/p}
+  C \left(\fint_{4B_0} |F|^{p_0} \right)^{1/p_0}
\left( \fint_{B_0} \omega \right)^{1/p},
\end{equation}
where $C$ depends only on $d$, $p_0$, $p_1$,  $p$, $c_1$, $N_1$, $N_2$, and the $A_{p/p_0}$ constant of $\omega$.
\end{thm}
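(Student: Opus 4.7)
The plan is to prove (\ref{real-3}) by a weighted good-$\lambda$ inequality driven by a Calder\'on--Zygmund stopping-time decomposition on $B_0$, together with the two-function splitting furnished by the hypotheses. Set $G=|F|^{p_0}$, $g=|f|^{p_0}$, and $q=p/p_0>1$; then the assumption becomes $\omega\in A_q(\R^d)$ and the conclusion is a weighted $L^q(\omega)$ bound for $G$ on $B_0$. I will draw on two weighted tools: Muckenhoupt's theorem (the Hardy--Littlewood maximal operator $\mathcal{M}$ is bounded on $L^q(\omega)$ since $\omega\in A_q$) and the $A_\infty$ self-improvement (\ref{A-infty}), which furnishes some $\sigma\in(0,1)$ with $\omega(E)/\omega(B)\le C(|E|/|B|)^\sigma$ for $E\subset B$.

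First I would fix a large constant $\alpha_0=\alpha_0(d,c_1)$ and, for each level $\lambda>\lambda_0:=\alpha_0\fint_{4B_0}G$, perform a Calder\'on--Zygmund / Whitney decomposition relative to $B_0$. This produces essentially disjoint balls $\{B_j\}$ contained in $2B_0$ with $\lambda<\fint_{B_j}G\le C_d\lambda$, $|B_j|\le c_1|B_0|$, the Whitney-type average bound $\fint_{100B_j}G\le C\lambda$, and $\bigcup_j B_j$ covering $\{G>\lambda\}\cap B_0$ up to a null set. The smallness $|B_j|\le c_1|B_0|$, forced by choosing $\alpha_0$ large, is precisely what allows hypotheses (\ref{real-1})--(\ref{real-2}) to be applied with $B=B_j$.

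The core step is a good-$\lambda$ bound. Fix $K$ large and $\delta>0$ small (to be chosen). Call $B_j$ \emph{bad} if $\mathcal{M}g>\delta\lambda$ throughout $B_j$, in which case $B_j\subset\{\mathcal{M}g>\delta\lambda\}$, and otherwise \emph{good}, in which case there exists $y_0\in B_j$ with $\mathcal{M}g(y_0)\le\delta\lambda$. Since every $B'\subset 4B_0$ containing $B_j$ also contains $y_0$, the supremum in (\ref{real-1})--(\ref{real-2}) is bounded by $(\delta\lambda)^{1/p_0}$. Using the pointwise splitting $G\le C(|F_{B_j}|^{p_0}+|R_{B_j}|^{p_0})$ on $2B_j$, Chebyshev's inequality applied to (\ref{real-1}) together with $\fint_{100B_j}G\le C\lambda$ yields
\begin{equation*}
\bigl|\{x\in 2B_j:|F_{B_j}|^{p_0}>K\lambda/C\}\bigr|\le \frac{C\bigl(N_1^{p_0}\delta+\eta^{p_0}\bigr)}{K}\,|B_j|,
\end{equation*}
which I would upgrade to an $\omega$-measure estimate with factor $[(N_1^{p_0}\delta+\eta^{p_0})/K]^\sigma$ via (\ref{A-infty}); weighted Chebyshev applied directly to (\ref{real-2}) gives
\begin{equation*}
\omega\bigl(\{x\in 2B_j:|R_{B_j}|^{p_0}>K\lambda/C\}\bigr)\le \frac{C N_2^{p_1}}{K^{p_1/p_0}}\,\omega(B_j).
\end{equation*}
Summing over the good $B_j$ and dumping the bad ones into $\{\mathcal{M}g>\delta\lambda\}$ produces a good-$\lambda$ inequality of the form
\begin{equation*}
\omega\bigl(\{G>K\lambda\}\cap B_0\bigr)\le \gamma\,\omega\bigl(\{G^{*}>\lambda\}\cap B_0\bigr)+\omega\bigl(\{\mathcal{M}g>\delta\lambda\}\cap B_0\bigr),\qquad \lambda>\lambda_0,
\end{equation*}
where $G^{*}$ is a localized maximal function of $G$ and $\gamma=C\bigl[(N_1^{p_0}\delta+\eta^{p_0})^\sigma K^{-\sigma}+N_2^{p_1}K^{-p_1/p_0}\bigr]$.

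To close, I would multiply by $q\lambda^{q-1}$ and integrate over $(\lambda_0,\infty)$. Because $p_1/p_0>q$, choosing $K$ sufficiently large (depending on $N_2$, $p_1/p_0$, $q$, and the $A_q$ constant of $\omega$) makes $K^q\cdot K^{-p_1/p_0}$ as small as desired; then choosing $\delta$ small (depending on $N_1$) and finally $\eta_0$ small (independent of $N_1$) enforces $\gamma K^q C_\omega^q<1/2$, where $C_\omega$ is the operator norm of $\mathcal{M}$ on $L^q(\omega)$. After truncating $G$ at a large height to guarantee finiteness of the left-hand side, the substitution $\lambda\mapsto K\lambda$ combined with Muckenhoupt's bound $\|G^{*}\|_{L^q(\omega)}\le C_\omega\|G\|_{L^q(\omega)}$ absorbs the $\gamma$-term. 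The leftover $\int\lambda^{q-1}\omega(\{\mathcal{M}g>\delta\lambda\})d\lambda\sim\|\mathcal{M}g\|_{L^q(\omega)}^q$ is bounded by $C\|g\|_{L^q(\omega)}^q$, again by Muckenhoupt, and the trivial range $\lambda\le\lambda_0$ contributes $\lambda_0^q\omega(B_0)$, producing the second term on the right of (\ref{real-3}). The main obstacle is the bookkeeping in the good-$\lambda$ step--in particular the use of $A_\infty$ self-improvement to trade the unweighted bound on $F_{B_j}$ for an $\omega$-weighted one whose $\eta$-factor still survives the $K^q$ dilation at the end, together with arranging the order of choices $K\to\delta\to\eta_0$ and removing the truncation so that the final constants depend only on the stated parameters.
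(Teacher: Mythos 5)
Your overall strategy coincides with the paper's: a Calder\'on--Zygmund stopping-time decomposition feeding a weighted good-$\lambda$ inequality, with the unweighted hypothesis (\ref{real-1}) upgraded to an $\omega$-measure bound via the $A_\infty$ property (\ref{A-infty}), the weighted hypothesis (\ref{real-2}) handled by Chebyshev in $L^{p_1/p_0}(\omega)$, the ``bad'' cubes dumped into $\{\mathcal{M}(|f|^{p_0})>\delta\lambda\}$, and the choices made in the order $K\to\delta\to\eta_0$ using $p_1/p_0>p/p_0$ to beat $K^{p/p_0}$. All of these local estimates are correct and match the paper's (\ref{real-20})--(\ref{real-21}).

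There is, however, a genuine gap in your closing absorption step. Your good-$\lambda$ inequality has $\{G>K\lambda\}\cap B_0$ on the left but $\{G^{*}>\lambda\}\cap B_0$ on the right, where $G^{*}$ is a maximal function of $G=|F|^{p_0}$ over $4B_0$ (it must be, since the Whitney balls at level $\lambda$ have averages of $G$ over balls that need not lie in $B_0$). Integrating against $q\lambda^{q-1}\,d\lambda$ and invoking Muckenhoupt then yields, schematically, $K^{-q}\int_{B_0}G^{q}\omega\le \lambda_0^{q}\,\omega(B_0)+\gamma\, C_\omega^{q}\int_{4B_0}G^{q}\omega+C\delta^{-q}\int_{4B_0}g^{q}\omega$. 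The middle term lives on the strictly larger set $4B_0$: it is not known a priori to be finite (the hypothesis is only $F\in L^p(4B_0,dx)$, not $F\in L^p(4B_0,\omega\,dx)$ --- establishing the latter is the whole point), and even if finite it cannot be absorbed into $\int_{B_0}G^{q}\omega$ no matter how small $\gamma$ is. Truncating $G$ in height restores finiteness but not the domain match. The paper's remedy is to phrase the good-$\lambda$ inequality for the \emph{same} quantity on both sides: it sets $E(t)=\{x\in Q_0:\mathcal{M}_{4B_0}(|F|^{p_0})(x)>t\}$ for a fixed cube $Q_0$ with $2Q_0\subset 2B_0$, proves $\omega(E(\theta^{-p_0/p}t))\le(\theta/2)\,\omega(E(t))+\omega\{\mathcal{M}_{4B_0}(|f|^{p_0})>\gamma t\}$ for $t\ge t_0$, and integrates over $t\in(t_0,T)$; since both integrals involve $\omega(E(t))$, the absorption is exact, the leftover range $t\le\theta^{-p/p_0}t_0$ contributes $Ct_0^{p/p_0}\omega(Q_0)$ with $t_0\approx\fint_{4B_0}|F|^{p_0}$ (the second term of (\ref{real-3})), and $\mathcal{M}_{4B_0}(|F|^{p_0})\ge|F|^{p_0}$ a.e.\ on $Q_0$ recovers the left side of (\ref{real-3}). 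You should restructure your final step along these lines (run the good-$\lambda$ for the maximal function itself, on a slightly shrunk ball or cube), after which the rest of your argument goes through.
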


For the most part,
the argument for Theorem \ref{real-thm} is parallel  to that for Theorem 3.1 in \cite{Shen-2005}.
It starts with a Calder\'on-Zgymund decomposition given in the next lemma.

\begin{lemma}\label{CZ-lemma}
Let $Q$ be a cube in $\mathbb{R} ^d$.
Suppose that $E\subset Q$ is open and $|E|< 2^{-d} |Q|$.
Then there exists a sequence $\{ Q_k\}$
of disjoint dyadic subcubes of $Q$ such that,

\begin{enumerate}

\item

$Q_k\subset E$;

\item

the dyadic parent of $Q_k$ in $Q$ is not contained in $E$;

\item

$ |E\setminus \cup_k Q_k| =0$.

\end{enumerate}
\end{lemma}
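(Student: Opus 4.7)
The plan is to construct $\{Q_k\}$ by a stopping-time selection inside the dyadic hierarchy of $Q$. Set up the hierarchy by letting the generation-$0$ cube be $Q$ itself, and by partitioning each generation-$j$ cube into $2^d$ congruent generation-$(j{+}1)$ dyadic children with disjoint interiors; every generation-$j$ cube with $j\ge 1$ has a unique dyadic parent.

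I would then run the following selection procedure. A dyadic subcube $Q'$ of generation $\ge 1$ is \emph{selected} if $Q'\subset E$ but its dyadic parent is not contained in $E$; if $Q'$ is neither selected nor contained in $E$, one passes to its $2^d$ children and repeats. The hypothesis $|E|<2^{-d}|Q|<|Q|$ ensures $Q\not\subset E$, which makes the children of $Q$ eligible for selection and guarantees the procedure is nontrivial. Denote by $\{Q_k\}$ the resulting (at most countable) collection of selected cubes. Properties (1) and (2) are immediate from the selection rule. Disjointness holds because the selection is a stopping time: descendants of a selected cube are never selected (they would fail the parent condition), and two selected cubes of different generations lie in disjoint dyadic ancestors at some common generation (otherwise one would be a descendant of the other).

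For property (3), I would argue as follows. The union of all dyadic grid boundaries in $Q$ has Lebesgue measure zero, so it suffices to show that every $x\in E$ lying in the interior of every dyadic cube it belongs to is contained in some $Q_k$. Fix such an $x$. Because $E$ is open, there exists $r>0$ with $B(x,r)\subset E$, so every sufficiently small dyadic cube containing $x$ is a subset of $E$. Going up the dyadic chain from such a small cube, there is a largest dyadic ancestor $Q_x\subsetneq Q$ with $Q_x\subset E$; its parent is either $Q$ itself or a strictly larger dyadic subcube, and by maximality it is not contained in $E$. (The possibility $Q_x=Q$ is excluded by $|E|<|Q|$.) Hence $Q_x$ meets the selection criterion, so $x\in Q_x\subset \bigcup_k Q_k$, which gives $|E\setminus\bigcup_k Q_k|=0$.

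The only delicate point is the standard bookkeeping for boundaries of dyadic cubes, which is immediately disposed of by the null-set observation above; beyond that the construction is a textbook stopping-time argument and I do not anticipate a real obstacle.
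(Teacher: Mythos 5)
Your proof is correct. The paper itself gives no argument for this lemma (it simply cites \cite[p.75]{Shen-book}), and your stopping-time selection of maximal dyadic subcubes contained in $E$ is exactly the standard construction that reference would supply; the handling of the dyadic boundaries as a null set and the maximality argument for property (3) are both sound.
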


\begin{proof}
See e.g. \cite[p.75]{Shen-book}.
\end{proof}

For $f\in L^1_{\loc} (\mathbb{R}^d)$ and a ball $B\subset \mathbb{R}^d$,
define
\begin{equation}\label{maximal}
\mathcal{M}_B (f) (x)=
\sup \left\{
\fint_{B^\prime} |f|: \ x\in B^\prime \text{ and } B^\prime \subset B \right\}
\end{equation}
for $x\in B$.

\begin{lemma}\label{w-lemma}
Let $\omega$ be an $A_1$ weight.
Then 
\begin{equation}\label{maximal-1}
\omega \big\{ x\in B: \  \mathcal{M}_B (f)(x)>\lambda \big\}
  \le \frac{C}{\lambda} \int_{B} |f|\omega\, dx
  \end{equation}
  for any $\lambda>0$,
  where $C$ depends only on $d$ and $C_\omega$ in  (\ref{A-1}). 
   If $1<p<\infty$ and $\omega\in A_p(\mathbb{R}^d)$, then 
  \begin{equation}\label{maximal-2}
\int_{B } |\mathcal{M}_{B} (f)  |^p \omega \, dx 
\le C \int_{B} |f|^p \omega\, dx,
 \end{equation}
 where $C$ depends at most on $d$, $p$ and  $C_\omega $ in (\ref{A-p}).
 \end{lemma}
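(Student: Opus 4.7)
The statement is the standard weak-type $(1,1)$ and strong-type $(p,p)$ bound for the Hardy--Littlewood maximal operator in the weighted setting, localized to sub-balls of a fixed ball $B$. Because the supremum in (\ref{maximal}) is restricted to balls $B'\subset B$, the localization causes no difficulty: every step of the classical global argument goes through verbatim, and one only needs the $A_1$ (respectively $A_p$) inequality on balls contained in $B$. My plan is therefore to follow the standard Muckenhoupt-type proof, doing the weak-type estimate by a Vitali covering argument combined with the $A_1$ condition, and then obtaining the strong $(p,p)$ estimate by the self-improvement of $A_p$ together with Marcinkiewicz interpolation.

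For (\ref{maximal-1}), let $E_\lambda=\{x\in B:\mathcal{M}_B(f)(x)>\lambda\}$. For each $x\in E_\lambda$ choose a ball $B_x\subset B$ with $x\in B_x$ and $\fint_{B_x}|f|>\lambda$. By the Vitali $5r$-covering lemma extract a disjoint subfamily $\{B_j\}$ with $E_\lambda\subset\bigcup_j 5B_j$. Using the doubling property of $\omega$ (which follows from $\omega\in A_1\subset A_\infty$) and the $A_1$ condition (\ref{A-1}) one has
\begin{equation*}
\omega(5B_j)\le C\,\omega(B_j)\le C|B_j|\inf_{B_j}\omega
\le \frac{C}{\lambda}\Bigl(\inf_{B_j}\omega\Bigr)\int_{B_j}|f|\,dx
\le \frac{C}{\lambda}\int_{B_j}|f|\,\omega\,dx,
\end{equation*}
and summing over $j$ (using disjointness of $\{B_j\}$ and $B_j\subset B$) produces (\ref{maximal-1}).

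For (\ref{maximal-2}), I would invoke the self-improvement property of $A_p$: since $\omega\in A_p(\mathbb{R}^d)$ with $1<p<\infty$, there exists $p_1<p$ such that $\omega\in A_{p_1}(\mathbb{R}^d)$. An argument entirely analogous to the one above, but now Hölderizing the average $\fint_{B_x}|f|$ against the weight $\omega^{-1/(p_1-1)}$ via the $A_{p_1}$ condition, yields the weak-type estimate
\begin{equation*}
\omega\{x\in B:\mathcal{M}_B(f)(x)>\lambda\}\le\frac{C}{\lambda^{p_1}}\int_B|f|^{p_1}\omega\,dx.
\end{equation*}
Since $\mathcal{M}_B$ is trivially bounded on $L^\infty$, Marcinkiewicz interpolation between this weak $(p_1,p_1)$ bound and the $L^\infty$ bound, with respect to the measure $\omega\,dx$, gives the strong $(p,p)$ estimate (\ref{maximal-2}) with a constant depending only on $d$, $p$, and the $A_p$ constant of $\omega$.

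There is no real obstacle here; the only point requiring mild care is to ensure every ball $B'$ produced by the Vitali selection is still contained in $B$ (or at least that $5B_j$ need not be) so that the integrals on the right-hand sides remain supported in $B$. This is handled by replacing $5B_j$ with $5B_j\cap B$ in the final summation and noting $\omega(5B_j\cap B)\le\omega(5B_j)$, which does not affect the stated constants.
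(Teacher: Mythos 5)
Your argument is correct, but it takes a different (much longer) route than the paper. The paper's entire proof is a one-line reduction: since every competitor ball $B'$ in the definition of $\mathcal{M}_B$ satisfies $B'\subset B$, one has $\mathcal{M}_B(f)\le \mathcal{M}(f\chi_B)$ pointwise on $B$, where $\mathcal{M}$ is the global Hardy--Littlewood maximal operator, and then (\ref{maximal-1}) and (\ref{maximal-2}) follow immediately from Muckenhoupt's classical weighted bounds for $\mathcal{M}$, which are simply cited. You instead reprove those classical bounds from scratch in the localized setting: Vitali covering plus the $A_1$ condition for the weak type $(1,1)$; the openness of the $A_p$ classes to get $\omega\in A_{p_1}$ with $p_1<p$, a Hölder argument against $\omega^{-1/(p_1-1)}$ for the weak type $(p_1,p_1)$, and Marcinkiewicz interpolation against the trivial $L^\infty$ bound for the strong type $(p,p)$. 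Every step is sound (and the constants do depend only on $d$, $p$ and the $A_p$ constant, since the self-improvement exponent $p_1$ is quantitatively controlled by $C_\omega$), so what you have written is essentially a self-contained proof of the weighted maximal theorem restricted to a ball. What the paper's reduction buys is brevity and the freedom to quote the literature; what your version buys is that the reader sees explicitly that the localization $B'\subset B$ creates no boundary issues -- though that point is also settled, more cheaply, by the observation $\mathcal{M}_B(f)\le\mathcal{M}(f\chi_B)$.
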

 
 \begin{proof}
 Observe that $\mathcal{M}_B (f) \le \mathcal{M}(f\chi_B)$ on $B$, where
 $\mathcal{M}$ is the  Hardy-Littlewood maximal operator in $\mathbb{R}^d$.
  As a result, (\ref{maximal-1}) and (\ref{maximal-2}) follow
 from the standard weighted norm inequalities for the operator $\mathcal{M}$
 with $A_p$ weights.
 See e.g. \cite{D-book}.
 \end{proof}
 
\begin{proof}[\bf Proof of Theorem \ref{real-thm}]

Let $Q_0$ be a cube such that $2Q_0\subset 2B_0$ and $|Q_0|\approx |B_0|$.
We shall  show that
\begin{equation}\label{real-4}
\left( 
\fint_{Q_0} |F|^p \omega\, dx \right)^{1/p}
\le C \left(\fint_{4B_0} |f|^p \omega\right)^{1/p}
+ C \left(\fint_{4B_0} |F|^{p_0}\right)^{1/p_0}
\left( \fint_{B_0} \omega \right)^{1/p}.
\end{equation}
The inequality (\ref{real-3}) follows from (\ref{real-4}) by a simple covering argument.

For $t>0$, let  
\begin{equation}\label{E-t}
E(t)= \big\{ x\in Q_0: \ \mathcal{M}_{4B_0} (|F|^{p_0})(x)  > t \big\}.
\end{equation}
We claim that  if $0\le \eta\le  \eta_0$ and $\eta_0>0$ is sufficiently small,
then it is possible to choose three constants $\theta, \gamma \in (0, 1)$ and $C_0>0$, such that
\begin{equation}\label{real-5}
\omega (E(\theta^{-p_0/p} t))
\le (\theta/2)  \omega(E(t))
+ \omega \big\{ x\in Q_0: \mathcal{M}_{4B_0} (|f|^{p_0} ) (x) >\gamma t \big \}
\end{equation}
for all $t\ge t_0$, where
\begin{equation}\label{real-6}
t_0= C_0 \fint_{4B_0} |F|^{p_0} .
\end{equation}
Moreover,  the constants $\eta_0$,  $\theta$ and $C_0$
depend at most on $d$, $p_0$, $p_1$, $p$,  $c_1$, $N_2$, and
the $A_{p/p_0}$ constant of $\omega$. The constant $\gamma$  also depends on $N_1$.

Assume (\ref{real-5}) for a moment.
Then
$$
\aligned
 & \int_{t_0}^T  t^{\frac{p}{p_0}-1}
\omega(E(\theta^{-p_0/p} t))\, dt\\
&\quad
 \le \frac{\theta}{2}
\int_{t_0}^T t^{\frac{p}{p_0}-1} \omega (E(t))\, dt
+ \int_{t_0}^T
t^{\frac{p}{p_0}-1}
\omega \big\{ x\in Q_0:
\mathcal{M}_{4B_0} (|f|^{p_0} )>\gamma t\big\}\, dt
\endaligned
$$
for any $T>t_0$. By a change of variables we obtain 
$$
\theta
\int_{\theta^{-\frac{p}{p_0}} t_0}^{\theta^{-\frac{p}{p_0}}T}
t^{\frac{p}{p_0} -1} 
 \omega(E( t))\, dt
\le \frac{\theta}{2}
\int_{t_0}^T t^{\frac{p}{p_0}-1}  \omega (E(t))\, dt
+  C \int_{Q_0}
 \big(\mathcal{M}_{4B_0} (|f|^{p_0}) \big)^{p/p_0}
 \omega\, dx.
$$
It follows that
$$
\frac{\theta}{2}
\int_{\theta^{-\frac{p}{p_0}} t_0}^{\theta^{-\frac{p}{p_0}}T}
t^{\frac{p}{p_0}-1} 
\omega (E(t))\, dt
\le \frac{\theta}{2}
\int_0^{\theta^{-\frac{p}{p_0}} t_0}
t^{\frac{p}{p_0}-1}
\omega (E(t))\, dt 
+ C \int_{Q_0}
\big( \mathcal{M}_{4B_0} (|f|^{p_0}) \big)^{p/p_0} \omega\, dx .
$$
This leads to
$$
\int_0^{\theta^{-\frac{p}{p_0} }T}
t^{\frac{p}{p_0}-1}
\omega (E(t))\, dt
\le C t^{\frac{p}{p_0}}_0 \omega(Q_0)
+ C\int_{Q_0}
\big( \mathcal{M}_{4B_0} (|f|^{p_0}) \big)^{p/p_0} \omega\, dx.
$$
By letting  $T\to \infty$ we see that
\begin{equation}\label{real-7}
\aligned
\int_{Q_0}
\big( \mathcal{M}_{4B_0} (|F|^{p_0} )\big)^{p/p_0}  \omega\, dx
 & \le 
C t^{\frac{p}{p_0}}_0 \omega(Q_0)
+ C \int_{Q_0}
\left( \mathcal{M}_{4B_0} (|f|^{p_0} )\right)^{p/p_0} \omega\, dx\\
&\le C\left(  \fint_{4B_0} |F|^{p_0} \right)^{p/p_0} \omega(B_0)
+ C \int_{4B_0} |f|^p \, \omega \, dx,
\endaligned
\end{equation}
where we have used (\ref{real-6}), the assumption $\omega\in A_{p/p_0}(\mathbb{R}^d)$ 
and   (\ref{maximal-2}) with $p=\frac{p}{p_0}>1$ for the last inequality.
The inequality (\ref{real-4})  now follows readily from (\ref{real-7}).

To see (\ref{real-5}), we first note that 
$$
|E(t)|\le \frac{C_d}{t}\int_{4B_0} |F|^{p_0} \, dx
< \theta |Q_0|
$$
if $ t> t_0$, where $t_0$ is given by (\ref{real-6}) with
$$
 C_0=2\theta^{-1} C_d |4B_0|/|Q_0|
 $$
  and $\theta\in (0, 2^{-d} )$ is a small constant to be determined.
 Fix $t>t_0$.
 Since $E(t)$ is open in $Q_0$, by Lemma \ref{CZ-lemma}, there exists a sequence  $\{ Q_k\}$
 of non-overlapping maximal  dyadic subcubes of $Q_0$ such that
 $$
 \cup_k Q_k \subset E(t)  \quad \text{ and } \quad 
 |E(t)\setminus \cup_k Q_k |=0.
 $$
 By choosing $\theta$ small, we may assume that $|Q_k|\le c_1 |Q_0|$.
 We shall show that if $0\le \eta\le \eta_0$ and $\eta_0>0$ is sufficiently small, then
 one may choose $\theta, \gamma \in (0, 2^{-d})$ such that 
 \begin{equation}\label{real-9}
 \omega (E(\theta^{-\frac{p_0}{p}} t)\cap Q_k )
 \le (\theta/2) \omega (Q_k),
 \end{equation}
 whenever
 \begin{equation}\label{real-10}
 Q_k \cap \big\{ x\in Q_0: \mathcal{M}_{4B_0} (|f|^{p_0}) (x) \le \gamma t  \big\}
 \neq \emptyset.
 \end{equation}
 It is not hard to see that this yields (\ref{real-5}).
 
 Finally, to prove (\ref{real-9}),
 we use the observation  that for any $x\in Q_k$,
 \begin{equation}
 \mathcal{M}_{4B_0} (|F|^{p_0}) (x) 
 \le \max \left (\mathcal{M}_{2B_k} (|F|^{p_0} ) (x) , C_d t \right),
 \end{equation} 
 where $B_k$ is the smallest ball containing $Q_k$ and $C_d$ depends only on $d$.
 It follows that if $\theta^{-\frac{p_0}{p}} > C_d$, 
 $$
 \aligned
 \omega (E(\theta^{-\frac{p_0}{p}} t)\cap Q_k)
 & \le \omega
\left \{ x\in Q_k:\  \mathcal{M}_{2B_k} (|F|^{p_0} ) (x) >\theta^{-\frac{p_0}{p}}  t\right\}\\
 &\le \omega \left\{ x\in Q_k: \  \mathcal{M}_{2B_k}
 (|F_{B_k} |^{p_0}) (x)>\frac{\theta^{-\frac{p_0}{p}} t }{2^{p_0+1}} \right\}\\
& \qquad
 + \omega \left\{ x\in Q_k:  \  \mathcal{M}_{2B_k}
 (|R _{B_k} |^{p_0}) (x)>\frac{\theta^{-\frac{p_0}{p}}
 t }{2^{p_0+1}} \right\}\\
 &=I_1 +I_2.
 \endaligned
 $$
 To bound $I_1$,
 we let
 $$
 E_k =\left\{ x\in Q_k:  \ \mathcal{M}_{2B_k}
 (|F_{B_k} |^{p_0} ) (x)>\frac{\theta^{-\frac{p_0}{p}} t }{2^{p_0+1}} \right\}.
 $$
 By  using (\ref{maximal-1}) with $\omega=1$, the assumption (\ref{real-1}) and (\ref{real-10}), wee see that
 $$
 \aligned
 |E_k|
 &  \le \frac{C\theta^{\frac{p_0}{p}} |Q_k|}{t}
 \fint_{2B_k}  |F_{B_k} |^{p_0}\, dx\\
 &\le C \theta ^{\frac{p_0}{p}}( N^{p_0}_1 \gamma  +\eta^{p_0} )  |Q_k|,
 \endaligned
 $$
where $C$ depends only on $d$ and $p_0$.
It follows  from (\ref{A-infty}) that
$$
\aligned
\frac{\omega(E_k)}{\omega (Q_k)}
 & \le C\left(\frac{|E_k|}{|Q_k|}\right)^\sigma\\
 &\le C \theta^{\frac{\sigma p_0}{p}}
 N_1^{p_0 \sigma} \gamma^\sigma 
 + C \eta^{p_0 \sigma} \theta^{\frac{\sigma p_0}{p}},
 \endaligned
$$
where $\sigma \in (0, 1)$ and $C>0$ depend only on $d$, $p_0$ and the $A_{p/p_0}$ constant of $\omega$.
Hence,
\begin{equation}\label{real-20}
I_1\le 
\left\{ 
 C \theta^{\frac{\sigma p_0}{p}}
 N_1^{p_0 \sigma} \gamma^{ \sigma}
 + C \eta^{p_0 \sigma} \theta^{\frac{\sigma p_0}{p}}\right\}
  \omega (Q_k).
 \end{equation}
 
To estimate $I_2$, we use (\ref{maximal-2}) with $p=\frac{p_1}{p_0}>1$  and the assumption (\ref{real-2})  to obtain
$$
\aligned
I_2 &\le  C
\bigg(
\frac{\theta^{\frac{p_0}{p}}}{t}
\bigg)^{\frac{p_1}{p_0}}
\fint_{2B_k}
|R_{B_k}|^{p_1}\, \omega\, dx\,  |Q_k|\\
&\le 
 C
 \bigg(
\frac{\theta^{\frac{p_0}{p}}}{t}
\bigg)^{\frac{p_1}{p_0}}
\left( N_2^{p_1}  t^{p_1/p_0} + \gamma^{p_1/p_0}  t^{p_1/p_0} \right)
\omega (Q_k)\\
&\le C \theta^{\frac{p_1}{p}}
N^{p_1}_2 \omega(Q_k),
\endaligned
$$
where $C$ depends only on $d$, $p_0$, $p_1$ and the $A_{p/p_0}$ constant of $\omega$.
This, together with (\ref{real-20}), gives
\begin{equation}\label{real-21}
 \omega (E(\theta^{-\frac{p_0}{p}} t)\cap Q_k)
 \le 
\left( C \theta^{\frac{\sigma p_0}{p}}
 N_1^{p_0 \sigma} \gamma^{\sigma}
 + C \eta^{p_0\sigma} \theta^{\frac{p_0 \sigma}{p}}
 +C \theta^{\frac{p_1}{p}}
N_2^{p_1}
 \right) \omega (Q_k),
\end{equation}
where $C>0$ and $\sigma>0$ depend at most  on $d$, $p_0$, $p_1$, $p$ and the $A_{p/p_0}$ constant $C_\omega$ in (\ref{A-p}).
To conclude  the proof,
we choose $\theta\in (0, 2^{-d})$ so small that 
$$
C\theta^{\frac{p_1}{p}}N_2^{p_1} < (1/6) \theta.
$$
This is possible since $p_1>p$.
With $\theta$ chosen, we choose $\gamma>0$ so small that 
$$
C \theta^{\frac{\sigma p_0}{p}}  N_1^{p_0\sigma} \gamma^{\sigma}  \le (1/6) \theta.
$$
Finally, we choose $\eta_0>0$ so that
$$
C\eta_0^{p_0 \sigma} \theta^{\frac{p_0\sigma}{p}} \le (1/6) \theta.
$$
It follows that if $0\le \eta\le \eta_0$, then (\ref{real-9} holds.
We note  that the small positive constants $\theta$ and $\eta_0$ depend 
at most on $d$,  $c_1$, $p_0$, $p_1$, $p$,   $ N_2$  and the $A_{p/p_0}$ constant for the weight $\omega$,
and that $\gamma$   also depends on $N_1$.
This completes the proof.
\end{proof}

\begin{remark}\label{remark-L}

Let $\e>0$.
Define
\begin{equation}\label{maximal-e-1}
{ \mathcal{M}}^{\e}(f) (x)=
\sup \left\{
\fint_{B^\prime} |f|: \ x\in B^\prime=B(y, r) \text{ and } r\ge \e,   \right\},
\end{equation}
and for $x\in B$.
\begin{equation}\label{maximal-e}
{ \mathcal{M}}^{\e}_B (f) (x)=
\sup \left\{
\fint_{B^\prime} |f|: \ x\in B^\prime=B(y, r)\subset B \text{ and }  r\ge \e  \right\}.
\end{equation}
Let $E_\e (t)$ be defined as in the proof of Theorem \ref{real-thm}, but $\mathcal{M}_{4B_0}$ replaced
by $\mathcal{M}^\e_{4B_0}$; i.e.,
\begin{equation}\label{E-t-e}
E_\e (t)= \left\{ x\in Q_0: \ \mathcal{M}^\e_{4B_0} (|F|^{p_0})(x)  > t \right\}.
\end{equation}
Observe that if $Q$ is a cube and $Q\subset E_\e(t)$, then 
$|Q|\ge c_d \e^d$, where $c_d$ depends only on $d$.
It follows that the sequence $\{Q_k\}$ in the proof of Theorem \ref{real-thm}
satisfies $|Q_k |\ge c_d \e^d$. Since $Q_k \subset B_k$,
we also obtain $|B_k|\ge c_d \e^d$.
Consequently, if the conditions (\ref{real-1}) and (\ref{real-2}) are only satisfied  by 
balls $B$ with radius $r\ge \e$, then 
\begin{equation}\label{real-3-e}
\aligned
& \left(\fint_{B_0}
\big\{ \mathcal{M}_{4B_0}^\e (|F|^{p_0})  \big\}^{\frac{p}{p_0}}  \omega \right)^{1/p}\\
 & \le C \left(\fint_{4B_0} |f|^p \omega \right )^{1/p}
+C \left(\fint_{4B_0} |F|^{p_0}\right)^{1/p_0}
\left(\fint_{B_0} \omega \right)^{1/p},
\endaligned
\end{equation}
where $C$ is independent of $\e$.
This inequality will be used for large-scale estimates in homogenization in Section \ref{section-large}.
We mention that in the case of $L^p$ estimates, a similar  observation was made in \cite{Armstrong-L-p}.
\end{remark}

%%%%%%%%%%%%%%%%%%%%%%%%%%%%%%%%

\section{Weighted $L^p$ estimates in $\mathbb{R}^d$}\label{section-real-R}

In this section we use Theorem \ref{real-thm} to establish weighted norm inequalities 
for operators in $\mathbb{R}^d$.
An operator $T$ is called sublinear if
there exists a constant $K\ge 1$ such that
\begin{equation}\label{sublinear}
|T(f+g)|\le K \big\{ |T(f)| + |T(g)| \big\}.
\end{equation}
We use $L^\infty_c(\mathbb{R}^d)$ to denote the space of bounded measurable functions with compact support in $\mathbb{R}^d$.

The following theorem may be regarded as the weighted version of Theorem 3.1 in \cite{Shen-2005}.
We emphasize that the condition (\ref{real-2-1}) and hence the conclusion (\ref{real-2-2})   are for
an individual weight.

\begin{thm}\label{real-thm-2}
Let $T$ be a sublinear operator in $\mathbb{R}^d$.
Let $0< p_0< p< p_1<\infty$ and $\omega$ be an $A_{p/p_0}$ weight.
Suppose that
\begin{enumerate}

\item

$T$ is bounded on $L^{p_0}(\mathbb{R}^d)$  with
$
\| T\|_{L^{p_0} \to L^{p_0}} \le C_0$;

\item 

for any ball $B\subset\mathbb{R}^d$ and any $g\in L_c^\infty (\mathbb{R}^d)$ with {\rm supp}$(g)\subset \mathbb{R}^d\setminus 4B$,
one has
\begin{equation}\label{real-2-1}
\left(\fint_{B} |T(g)|^{p_1} \omega \right)^{1/p_1}
\le N 
\left\{ 
\left(\fint_{3B} |T(g)|^{p_0} \right)^{1/p_0}
+\sup_{B^\prime\supset B}
\left(\fint_{B^\prime} |g|^{p_0} \right)^{1/p_0} \right\}
\left(\fint_{B} \omega\right)^{1/p_1}.
\end{equation}

\end{enumerate}
Then for any $f\in L_c^\infty (\mathbb{R}^d)$,
\begin{equation}\label{real-2-2}
\int_{\mathbb{R}^d}
|T(f)|^p \omega \, dx
\le C \int_{\mathbb{R}^d} |f|^p \omega \, dx,
\end{equation}
where $C$ depends only on $d$,  $K$, $p_0$, $p_1$, $p$, $C_0$, $N$, and the $A_{p/p_0}$
constant of $\omega$.
\end{thm}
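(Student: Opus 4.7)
The plan is to apply Theorem \ref{real-thm} with $F := T(f)$ and the same $f$ on the right-hand side. Fix $f\in L^\infty_c(\mathbb{R}^d)$ and a ball $B_0$ chosen so large that $4B_0\supset \mathrm{supp}(f)$. For a ball $B\subset 2B_0$ with $|B|\le c_1|B_0|$ (with $c_1$ small enough that $8B\subset 4B_0$), split $f = f\chi_{8B} + f\chi_{\mathbb{R}^d\setminus 8B}$ and set
$$F_B := K\,T(f\chi_{8B}), \qquad R_B := K\,T(f\chi_{\mathbb{R}^d\setminus 8B}),$$
where $K$ is the sublinearity constant in (\ref{sublinear}). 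Then $|F|\le |F_B|+|R_B|$ on $2B$.

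The bound on $F_B$ is a direct consequence of hypothesis~(1): by $L^{p_0}$-boundedness,
$$\Bigl(\fint_{2B}|F_B|^{p_0}\Bigr)^{1/p_0}\le C(K,C_0,d,p_0)\Bigl(\fint_{8B}|f|^{p_0}\Bigr)^{1/p_0},$$
which is dominated by $N_1\sup_{4B_0\supset B'\supset B}(\fint_{B'}|f|^{p_0})^{1/p_0}$. This gives (\ref{real-1}) with $\eta=0$. For $R_B$, apply hypothesis~(\ref{real-2-1}) to $g = f\chi_{\mathbb{R}^d\setminus 8B}$ with the ball $2B$ (whose $4$-fold dilate is $8B$, so the support condition holds), then use doubling of $\omega\in A_\infty$ to pass the weight average from $2B$ to $B$. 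The subtle term $(\fint_{6B}|T(g)|^{p_0})^{1/p_0}$ on the right is handled by sublinearity, $|T(g)|\le K|F|+|F_B|$, and then bounding the $|F_B|$-piece on $6B$ by an $|f|$-average via the $L^{p_0}$-estimate again. This produces
$$\Bigl(\fint_{2B}|R_B|^{p_1}\omega\Bigr)^{1/p_1}\le N_2\Bigl\{\Bigl(\fint_{100B}|F|^{p_0}\Bigr)^{1/p_0} + \sup_{4B_0\supset B'\supset B}\Bigl(\fint_{B'}|f|^{p_0}\Bigr)^{1/p_0}\Bigr\}\Bigl(\fint_B\omega\Bigr)^{1/p_1},$$
which is (\ref{real-2}), since $6B\subset 100B$.

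With both conditions verified (and $\eta=0$, so no smallness is needed), Theorem \ref{real-thm} gives
$$\Bigl(\fint_{B_0}|F|^p\omega\Bigr)^{1/p}\le C\Bigl(\fint_{4B_0}|f|^p\omega\Bigr)^{1/p} + C\Bigl(\fint_{4B_0}|F|^{p_0}\Bigr)^{1/p_0}\Bigl(\fint_{B_0}\omega\Bigr)^{1/p}.$$
Taking $B_0 = B(0,R)$, multiplying through by $|B_0|$, and applying monotone convergence on the left, the first term is dominated by $C\int_{\mathbb{R}^d}|f|^p\omega$, while the second becomes $C\|F\|_{L^{p_0}(\mathbb{R}^d)}^p\cdot\omega(B_0)/|B_0|^{p/p_0}$. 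Now $F\in L^{p_0}(\mathbb{R}^d)$ by hypothesis~(1), and for $q := p/p_0$ the $A_q$-condition may be rewritten as $\omega(B_0)\le C_\omega|B_0|^q(\int_{B_0}\omega^{-1/(q-1)})^{-(q-1)}$; since $\omega^{-1/(q-1)}$ is positive a.e.\ and locally integrable, the denominator tends to $\infty$, so $\omega(B_0)/|B_0|^{p/p_0}\to 0$. This gives (\ref{real-2-2}).

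The main technical obstacle is fitting the off-support bound on $R_B$ into the template of (\ref{real-2}): after invoking hypothesis~(\ref{real-2-1}) on $2B$, sublinearity produces a cross-term involving $|F_B|$ that must be re-absorbed into an $|f|$-average through the $L^{p_0}$-boundedness, and the weight average must be transferred from $2B$ to $B$ via $A_\infty$-doubling. The remaining subtlety is the vanishing of $\omega(B_0)/|B_0|^{p/p_0}$ as $R\to\infty$ for $A_{p/p_0}$-weights, needed to discard the correction term in the conclusion of Theorem \ref{real-thm}.
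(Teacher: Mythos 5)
Your proposal follows the paper's proof essentially verbatim: same choice $F=T(f)$, same decomposition into a near part $K\,T(f\chi_{8B})$ and a far part handled by hypothesis (\ref{real-2-1}) on the ball $2B$ (the paper uses a smooth cutoff $\varphi_B$ supported in $9B$ rather than the sharp cutoff $\chi_{8B}$, which is immaterial), the same re-absorption of $|T(g)|$ via sublinearity and $L^{p_0}$-boundedness, and the same passage $R\to\infty$ to kill the correction term.

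The one place you deviate is the justification of $\omega(B_0)/|B_0|^{p/p_0}\to 0$. Your stated reason --- that $\omega^{-1/(q-1)}$ is positive a.e.\ and locally integrable, hence $\int_{B(0,R)}\omega^{-1/(q-1)}\to\infty$ --- is not valid as a general implication (e.g.\ $e^{-|x|^2}$ is positive and locally integrable with finite total mass). The conclusion is nevertheless true here, but it needs the weight structure: either note that $\omega^{-1/(q-1)}$ is itself an $A_{q'}$ weight, and every $A_\infty$ weight has infinite total mass by (\ref{A-infty}) applied with $E=B(0,1)$ and $B=B(0,R)$; or do what the paper does, namely use the openness of the $A_q$ classes to get $\omega\in A_{q_1}$ for some $1<q_1<p/p_0$, whence $\fint_{B_0}\omega\le C_\omega|B_0|^{q_1-1}$ and the second term is $O(|B_0|^{q_1-p/p_0})\to 0$. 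With that repair the argument is complete.
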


\begin{proof}
Let $f\in L_c^\infty(\mathbb{R}^d)$. 
Fix $B_0=B(0, R)$, where $R>1$ is so large that supp$(f) \subset B(0, R/4)$.
We apply Theorem \ref{real-thm} with $f$ and $F=T(f)$.
For each ball $B\subset 2B_0$ with $|B|\le c_d |B_0|$, we define 
$$
F_B= K T(f\varphi_B) \quad 
\text{ and } 
\quad
R_B =K T (f(1-\varphi_B)),
$$
where $\varphi_B \in C_0^\infty(9B)$ satisfies  $0\le \varphi \le 1$ and $\varphi_B =1$ on $8B$.
Note that by (\ref{sublinear}), $|F|\le |F_B| + |R_B|$ on $\mathbb{R}^d$, and that 
$$
\aligned
\left(\fint_{2B}
|F_B|^{p_0} \right)^{1/p_0}
&=K \left(\frac{1}{|2B|}
\int_{\mathbb{R}^d}
|T(f\varphi_B)|^{p_0}\, dx \right)^{1/p_0}\\
& \le C \left(\fint_{9B} |f|^{p_0} \right)^{1/p_0},
\endaligned
$$
where we have used the boundedness  of $T$ on $L^{p_0}(\mathbb{R}^d) $ for the last inequality.
Next, since $f(1-\varphi_B)=0$ in $8B$,
it follows from the assumption (\ref{real-2-1}) that
$$
\aligned
\left(\fint_{2B} |R_B|^{p_1} \omega \right)^{1/p_1}
&\le 
C \left\{
\left(\fint_{6B} |R_B|^{p_0} \right)^{1/p_0}
+ \sup_{4B_0\supset B^\prime\supset B}
\left(\fint_{B^\prime}
|f|^{p_0} \right)^{1/p_0} 
\right\}
\left(\fint_{2B} \omega \right)^{1/p_1}\\
& 
\le 
C \left\{
\left(\fint_{6B} |F|^{p_0} \right)^{1/p_0}
+ \sup_{4B_0\supset B^\prime\supset B}
\left(\fint_{B^\prime}
|f|^{p_0} \right)^{1/p_0} 
\right\}
\left(\fint_B \omega \right)^{1/p_1},
\endaligned
$$
where we have used the fact $|R_B|\le K (|F| +|F_B|)$ and the boundedness of $T$ on $L^{p_0}(\mathbb{R}^d)$.
Thus we have verified the conditions in Theorem \ref{real-thm} with $\eta=0$, which yields 
$$
\left(\fint_{B_0}
|T(f)|^p \omega \right)^{1/p}
\le C \left(\fint_{4B_0}
|f|^p \omega \right)^{1/p}
+ C
\left(\fint_{4B_0} |T(f)|^{p_0} \right)^{1/p_0}
\left(\fint_{B_0} \omega\right)^{1/p}.
$$
Hence,
\begin{equation}\label{real-2-4}
\int_{B_0}
|T(f)|^p \omega\, dx
\le C \int_{\mathbb{R}^d}  |f|^p \omega\, dx
+ C |B_0|^{1-\frac{p}{p_0}}
\|T(f)\|_{L^{p_0}(\mathbb{R}^d)}^p
\fint_{B_0} \omega.
\end{equation}
Finally, we note that the condition  $\omega\in A_{p/p_0}(\mathbb{R}^d)$ implies 
$\omega\in A_q(\mathbb{R}^d)$ for some $1<q< \frac{p}{p_0}$.
It follows that
$$
\fint_{B_0} \omega
\le {C |B_0|^{q-1}}{ \left( \int_{B_0} \omega^{-\frac{1}{q-1}}\right)^{1-q}  }
\le  C_\omega  |B_0|^{q-1},
$$
where $C_\omega$ depends on $\omega$.
Therefore, by letting $R\to \infty$ in (\ref{real-2-4}),
we obtain (\ref{real-2-2}).
\end{proof}

\begin{remark}
If $T$ is a Calder\'on-Zygmund operator,
then it satisfies the condition (\ref{real-2-1}).
Indeed, suppose 
$$
T(f)(x) =\int_{\mathbb{R}^d}
K(x, y) f(y)\, dy
$$
for $x\notin \text{supp}(f)$, where $K(x, y)$ satisfies the condition
$$
| K(x+h, y)-K(x, y)| \le \frac{C |h|^\sigma}{|x-y|^{d+\sigma}}
$$
for any $x, y, h \in \mathbb{R}^d$ with $|h|< (1/2)|x-y|$, where $\sigma\in (0, 1]$.
Let $g\in L_c^\infty(\mathbb{R}^d)$ with supp$(g) \subset \mathbb{R}^d\setminus 4B$.
Then 
$$
\| T(g)\|_{L^\infty(B)}
\le \fint_{B} |T(g)|
+ C \sup_{B^\prime\supset B} \fint_{B^\prime} |g|,
$$
which shows that (\ref{real-2-2}) holds for any $1\le p_0<p_1<\infty$.
Thus, if $T$ is bounded on $L^{p_0}(\mathbb{R}^d)$ for some $1< p_0<\infty$,
it is bounded on $L^p(\mathbb{R}^d, \omega dx )$ for any $p_0<p<\infty$ and
$\omega\in A_{p/p_0} (\mathbb{R}^d)$.
\end{remark}

Consider the elliptic system
\begin{equation}\label{ES}
-\text{\rm div} (A\nabla u) =\text{\rm div} (f) \quad \text{ in } \mathbb{R}^d,
\end{equation}
where $A=(a_{ij}^{\alpha\beta}(x) )$
satisfies the ellipticity condition (\ref{ellipticity}).
Given $f\in L_c^\infty(\mathbb{R}^d)$, there exists $u\in H^1_{\loc}(\mathbb{R}^d)$ 
that satisfies (\ref{ES}) and $|\nabla u|\in L^2(\mathbb{R}^d)$.
Moreover, the solution $u$ is unique up to a constant and
$\|\nabla u\|_{L^2(\mathbb{R}^d)}
\le C \| f\|_{L^2(\mathbb{R}^d)}$.
The next theorem gives a sufficient (and necessary) condition for the weighted $L^2$ inequality,
\begin{equation}\label{ES-1}
\int_{\mathbb{R}^d} |\nabla u|^2 \omega \, dx 
\le C \int_{\mathbb{R}^d} |f|^2 \omega\, dx.
\end{equation}

\begin{thm}\label{ES-thm}
Let $A$ be a matrix satisfying (\ref{ellipticity}).
Let $ \omega$ be an $A_1$ weight.
Suppose that for any ball $ B \subset \mathbb{R}^d$,
\begin{equation}\label{ES-0}
\fint_{B} |\nabla u|^{2} \omega 
\le N  \fint_{2B} |\nabla u|^2
\cdot  
\fint_{B} \omega,
\end{equation}
whenever  $u\in H^1(4B)$ is a weak solution of $\text{\rm div}(A\nabla u)=0$ in $4B$.
Then, for any $f\in L_c^\infty(\mathbb{R}^d)$, weak solutions of (\ref{ES}) with $|\nabla u|\in L^2(\mathbb{R}^d)$
satisfy the estimate (\ref{ES-1}), 
where $C$ depends only on $d$, $m$, $\mu$, $N$ and the $A_1$ constant of $\omega$.
\end{thm}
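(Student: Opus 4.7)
The plan is to apply Theorem~\ref{real-thm-2} to the sublinear operator $T\colon f\mapsto|\nabla u|$, where $u$ is the (unique-up-to-constant) $H^1_{\loc}$-solution of $-\text{\rm div}(A\nabla u)=\text{\rm div}(f)$ with $|\nabla u|\in L^2(\mathbb{R}^d)$. I shall take $p=2$, some $p_0\in(1,2)$ close to $2$, and $p_1=2+\delta$ with a small $\delta>0$ to be produced. The operator $T$ is sublinear with $K=1$, and by the classical Meyers/Gehring self-improvement of the energy estimate for elliptic systems on $\mathbb{R}^d$, $T$ is bounded on $L^{p_0}(\mathbb{R}^d)$ for every $p_0$ in an interval $(p_*,2]$ with $p_*<2$ depending only on $d$, $m$, $\mu$. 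Since $\omega\in A_1\subset A_{2/p_0}$, the weight condition of Theorem~\ref{real-thm-2} is automatic.

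The main task is to verify the weighted inequality~(\ref{real-2-1}) for some $p_1>2$. For $g\in L^\infty_c(\mathbb{R}^d)$ supported outside $4B$, the associated solution $v$ satisfies $T(g)=|\nabla v|$ and $\text{\rm div}(A\nabla v)=0$ in $4B$. Applying hypothesis~(\ref{ES-0}) on any sub-ball $B'$ with $4B'\subset 4B$ and combining with the interior Meyers reverse H\"older estimate $(\fint_{2B'}|\nabla v|^2)^{1/2}\leq C(\fint_{3B'}|\nabla v|^{p_0})^{1/p_0}$ gives
\[
\left(\fint_{B'}|\nabla v|^2\omega\right)^{1/2}
\leq C\left(\fint_{3B'}|\nabla v|^{p_0}\right)^{1/p_0}\left(\fint_{B'}\omega\right)^{1/2}.
\]
The $A_1$ condition $\omega\geq c\fint_{3B'}\omega$ a.e.\ on $3B'$ allows me to replace the unweighted $L^{p_0}$-average on the right by its weighted counterpart, and dividing by $(\fint_{B'}\omega)^{1/2}$ reinterprets the inequality as a standard reverse H\"older estimate in the doubling measure $\omega\,dx$ for the function $h=|\nabla v|$:
\[
\left(\omega(B')^{-1}\int_{B'} h^2\,\omega\right)^{1/2} \leq C \left(\omega(3B')^{-1}\int_{3B'} h^{p_0}\,\omega\right)^{1/p_0}.
\]
Since this holds uniformly over such sub-balls $B'$, Gehring's self-improvement lemma (which extends without modification to any doubling measure) supplies $\delta>0$ with
\[
\left(\omega(B)^{-1}\int_{B} h^{2+\delta}\,\omega\right)^{1/(2+\delta)} \leq C\left(\omega(cB)^{-1}\int_{cB} h^{p_0}\,\omega\right)^{1/p_0}
\]
for an absolute $c>1$.

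To match~(\ref{real-2-1}) I multiply through by $(\fint_B\omega)^{1/(2+\delta)}$ on the left (so that the left side becomes $(\fint_B h^{2+\delta}\omega)^{1/(2+\delta)}$), and on the right I replace the weighted $L^{p_0}$-average by an unweighted one on a slightly larger ball via H\"older's inequality (using the reverse H\"older property of $\omega\in A_\infty$) followed by one more application of the self-improved Meyers estimate for $|\nabla v|$. This produces~(\ref{real-2-1}) with $p_1=2+\delta$, and Theorem~\ref{real-thm-2} then delivers~(\ref{ES-1}). The main technical obstacle is the final conversion step: the combined H\"older--Meyers estimate introduces a product exponent for $|\nabla v|$ that must remain inside the Meyers admissible range. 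This is always achievable by choosing $p_0$ sufficiently close to, but strictly below, $2$ and invoking the self-improvement of both reverse H\"older inequalities involved.
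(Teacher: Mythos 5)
Your architecture is the same as the paper's: reduce to Theorem \ref{real-thm-2} with $p=2$, obtain $L^{p_0}$-boundedness of $T$ from the Meyers estimate, and verify (\ref{real-2-1}) by self-improving a reverse H\"older inequality manufactured from the hypothesis (\ref{ES-0}). Your steps 1--4 are sound; running Gehring for $|\nabla v|$ in the doubling measure $\omega\,dx$ is an equivalent reformulation of the paper's device of running it for $|(\nabla v)\omega^{1/2}|$ in Lebesgue measure.

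The final conversion step, however, fails as written. After Gehring you must dominate $\bigl(\omega(cB)^{-1}\int_{cB}|\nabla v|^{p_0}\omega\bigr)^{1/p_0}$ by an unweighted average of $|\nabla v|$. Your route is H\"older against the reverse H\"older inequality (\ref{RH}) for $\omega$: with $s=1+\sigma$ the reverse H\"older exponent of $\omega$ and $s'=s/(s-1)$, this produces $\bigl(\fint_{cB}|\nabla v|^{p_0 s'}\bigr)^{1/(p_0 s')}$, and you then need a Meyers estimate to bring the exponent $p_0 s'$ back down to $p_0$. But $p_0 s'=p_0(1+\sigma)/\sigma$, and $\sigma$ is dictated by the $A_1$ constant of $\omega$ and may be arbitrarily close to $0$; then $p_0 s'\ge s'$ lies far above the Meyers threshold $2+\epsilon(d,m,\mu)$, and no admissible choice of $p_0>p_*$ rescues this --- indeed your proposed remedy of taking $p_0$ close to $2$ pushes $p_0 s'$ up, not down, and neither self-improvement (of the Meyers range or of $s$) is quantitative enough to close the gap for a general $A_1$ weight. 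The correct move, and the one the paper makes, is not to touch (\ref{RH}) here at all: first use H\"older in the measure $\omega\,dx$ to raise the right-hand exponent of the Gehring conclusion from $p_0$ to $2$, and then apply the hypothesis (\ref{ES-0}) one more time, which gives $\omega(cB)^{-1}\int_{cB}|\nabla v|^{2}\omega \le N\,|cB|\,\omega(cB)^{-1}\fint_{cB}\omega\cdot\fint_{2cB}|\nabla v|^{2}=N\fint_{2cB}|\nabla v|^{2}$; the interior reverse H\"older inequality then takes you down to the unweighted $L^{1}$ (hence $L^{p_0}$) average, exactly as in the paper's display (\ref{SI}). With this substitution your proof closes.
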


\begin{proof}
For $f\in L_c^\infty(\mathbb{R}^d)$,
let $T(f)=\nabla u$, where $u$ is a weak solution of (\ref{ES}) with $|\nabla u|\in L^2(\mathbb{R}^d)$.
To prove (\ref{ES-1}), we shall apply Theorem \ref{real-thm-2} with $p=2$.
First, by the Myers estimates,   there exists $1< p_0<2$, depending only on $d$, $m$ and $\mu$, such that
$T$ is bounded on $L^{p_0} (\mathbb{R}^d)$.
Next, let $g\in L_c^\infty(\mathbb{R}^d)$ with supp$(g)\subset \mathbb{R}^d\setminus 4B$ and $v=T(g)$.
Then $\text{\rm div}(A\nabla v) =0$ in $4B$.
By the reverse H\"older estimates,
$$
\left(\fint_{B^\prime }|\nabla v|^2 \right)^{1/2}
\le C 
\fint_{(3/2)B^\prime} |\nabla v|,
$$
where $B^\prime$ is a ball with $2B^\prime\subset 4B$.
This, together with the condition (\ref{ES-0}), gives
$$
\aligned
\left(\fint_{B^\prime} |(\nabla v) \omega^{1/2}|^2 \right)^{1/2}
&\le C \fint_{3B^\prime} |\nabla v| \cdot \left(\fint_{B^\prime} \omega\right)^{1/2}\\
&\le C \fint_{3B^\prime}
|(\nabla v) \omega^{1/2} |,
\endaligned
$$
where we have used the $A_1$ condition (\ref{A-1}) for the last  inequality.
By the self-improving property of the reverse H\"older inequality
for the function $|(\nabla v) \omega^{1/2}|$,
there exists $p_1>2$, depending only on $d$, $m$,  $\mu$, $N$ and the $A_1$ constant of $\omega$,
such that
$$
\aligned
\left(\fint_{B}
|(\nabla v)\omega^{1/2}|^{p_1} \right)^{1/p_1}
 & \le  C
\left(\fint_{(5/4)B}
|(\nabla v)\omega^{1/2}|^{2} \right)^{1/2}\\
&\le C \left(\fint_{(5/2)B}
|\nabla v|^2\right)^{1/2}
\cdot \left(\fint_{B} \omega\right)^{1/2},
\endaligned
$$
where we have used the condition (\ref{ES-0}) for the last inequality.
Using the $A_1$ condition (\ref{A-1}) again, we obtain 
\begin{equation}\label{SI}
\left(\fint_{B} |\nabla v|^{p_1} \omega \right)^{1/p_1}
\le C  \fint_{3B} |\nabla v| \cdot 
\left(\fint_{B} \omega\right)^{1/p_1}.
\end{equation}
As a result, we have proved that $T$ satisfies the conditions in Theorem \ref{real-thm-2}, from which 
(\ref{ES-1}) follows.
\end{proof}

\begin{remark}

The condition (\ref{ES-0}) in Theorem \ref{ES-thm} is also necessary.
We provide a proof for the case of bounded Lipschitz domains in the next section.
The same argument works equally well for the case of  $\mathbb{R}^d$.

\end{remark}

%%%%%%%%%%%%%%%%%%%%%%%%%%%%%%%%%%%

\section{Boundary weighted estimates}\label{section-main-1}

In this section we present a boundary version of Theorem \ref{real-thm} for a Lipschitz domain
and give the proof of Theorem \ref{main-thm-1}.

\begin{thm}\label{real-thm-b}
Let $0<  p_0<p<p_1<\infty$ and $\omega$ be an $A_{p/p_0}  $ weight.
Let $\Omega$ be a bounded Lipschitz domain and $B_0=B(x_0, r_0)$ where $x_0\in \partial\Omega$ and
$0<r_0< c_0\, \text{\rm diam}(\Omega)$.
Let $F\in L^p(4B_0 \cap \Omega )$ and $f\in L^p(4B_0\cap \Omega)$.
Suppose that for each ball $B=B(y_0, r)$ with the properties that  $|B|\le c_1 r_0^d$ and
either $y_0\in  2B_0\cap \partial \Omega$ or $4B\subset  2B_0\cap \Omega $,
there exist two functions $F_B$ and $R_B$,  defined on $2B\cap \Omega$, such that
$|F|\le |F_B| + |R_B|$ on $2B\cap \Omega $, and
\begin{align}
\left(\fint_{ 2B\cap \Omega } |F_B|^{p_0}\right)^{1/p_0}
 & \le N_1 \sup _{ 4B_0\supset B^\prime \supset B}
\left(\fint_{ B^\prime\cap \Omega} |f|^{p_0}  \right)^{1/p_0}
+ \eta \left(\fint_{8B \cap \Omega} |F|^{p_0} \right)^{1/p_0}, \label{real-1b}\\
\left(\fint_{2B \cap \Omega }
|R_{B}|^{p_1} \omega \right)^{1/p_1}
 & \le N_2
\left\{ 
\left(\fint_{8B\cap \Omega} |F|^{p_0}\right)^{1/p_0}
+ \sup_{ 4B_0\supset B^\prime\supset B}
\left(\fint_{B^\prime\cap \Omega}
|f|^{p_0}\right)^{1/p_0} \right\}
\left(\fint_{B}  \omega \right)^{1/p_1},
\label{real-2b}
\end{align}
where  $N_1, N_2>1$, $0<c_1<1$,
and $\eta\ge 0$.
Then there exists $\eta_0>0$, depending
only on $d$, $p_0$, $p_1$,  $p$, $c_1$,  $N_2$,  the $A_{p/p_0}$ constant of $\omega$ and the Lipschitz character of $\Omega$, with the property that if
$0\le \eta\le  \eta_0$, then
\begin{equation}\label{real-3b}
  \left( 
\fint_{B_0\cap \Omega } |F|^p \omega \right)^{1/p}\\
 \le C \left(\fint_{4B_0\cap \Omega  } |f|^p \omega\right)^{1/p}
+  C \left(\fint_{4B_0\cap \Omega } |F|^{p_0} \right)^{1/p_0}
\left( \fint_{ B_0 } \omega \right)^{1/p},
\end{equation}
where $C$ depends only on $d$, $p_0$, $p_1$, $p$, $c_1$, $N_1$, $N_2$,  the $A_{p/p_0}$ constant of $\omega$
and the Lipschitz character of $\Omega$.
\end{thm}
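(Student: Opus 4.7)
The argument closely parallels the proof of Theorem~\ref{real-thm}; the only genuinely new issue is to match the dyadic Calder\'on--Zygmund decomposition inside a cube based at $\partial\Omega$ to the two admissible ball types permitted by the hypotheses (interior balls with $4B\subset 2B_0\cap\Omega$, or boundary balls centered at $2B_0\cap\partial\Omega$). First I would fix a cube $Q_0$ with $Q_0\subset 2B_0$, $|Q_0|\asymp|B_0|$, and $|Q_0\cap\Omega|\asymp|Q_0|$ (possible since $x_0\in\partial\Omega$ and $\Omega$ is Lipschitz), and by a finite covering of $B_0\cap\Omega$ reduce (\ref{real-3b}) to
\[
\Big(\fint_{Q_0\cap\Omega}|F|^p\omega\Big)^{1/p}
\le C\Big(\fint_{4B_0\cap\Omega}|f|^p\omega\Big)^{1/p}
+ C\Big(\fint_{4B_0\cap\Omega}|F|^{p_0}\Big)^{1/p_0}\Big(\fint_{B_0}\omega\Big)^{1/p}.
\]

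Following the interior proof, I would work with the level sets
\[
E(t)=\big\{x\in Q_0\cap\Omega:\,\mathcal{M}(|F|^{p_0}\chi_{4B_0\cap\Omega})(x)>t\big\},
\]
where $\mathcal{M}$ is the Hardy--Littlewood maximal operator on $\mathbb{R}^d$, and apply Lemma~\ref{CZ-lemma} to obtain stopping dyadic subcubes $\{Q_k\}$ of $Q_0$ once $t\ge t_0=C_0\fint_{4B_0\cap\Omega}|F|^{p_0}$ (with $\theta$ small enough that $|Q_k|\le c_1|Q_0|$). To each $Q_k$ of side length $r_k$ I would assign an admissible ball $B_k$ of radius $\asymp r_k$: if $\mathrm{dist}(Q_k,\partial\Omega)\ge C^* r_k$ for a sufficiently large constant $C^*$, take $B_k$ an enclosing ball with $4B_k\subset 2B_0\cap\Omega$; otherwise pick $y_k\in\partial\Omega$ with $|y_k-x_k|\lesssim r_k$ (the Lipschitz character puts $y_k\in 2B_0\cap\partial\Omega$) and set $B_k=B(y_k,Cr_k)$ with $C$ large enough that $Q_k\subset B_k$. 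In either case $|Q_k|\asymp|B_k|\asymp|B_k\cap\Omega|$.

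With the assignment in place, for $x\in Q_k$ the maximality of $Q_k$ gives
\[
\mathcal{M}(|F|^{p_0}\chi_{4B_0\cap\Omega})(x)\le\max\!\big(\mathcal{M}(|F|^{p_0}\chi_{CB_k\cap\Omega})(x),\,C_d\,t\big),
\]
exactly as in the interior case. Splitting $|F|\le|F_{B_k}|+|R_{B_k}|$ on $2B_k\cap\Omega$ via (\ref{real-1b})--(\ref{real-2b}), I would apply the weak-$(1,1)$ part of Lemma~\ref{w-lemma} to $F_{B_k}$ and the strong $L^{p_1/p_0}$ weighted bound (valid because $\omega\in A_{p/p_0}\subset A_{p_1/p_0}$) to $R_{B_k}$, and then use (\ref{A-infty}) to transfer the Lebesgue estimate for $F_{B_k}$ to an $\omega$-measure estimate. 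Choosing $\theta\in(0,2^{-d})$ so that $C\theta^{p_1/p}N_2^{p_1}<\theta/6$ (possible since $p_1>p$), then $\gamma$ so that $C\theta^{\sigma p_0/p}N_1^{p_0\sigma}\gamma^\sigma\le\theta/6$, and finally $\eta_0$ so that $C\eta_0^{p_0\sigma}\theta^{p_0\sigma/p}\le\theta/6$, produces the good-$\lambda$ inequality
\[
\omega\!\left(E(\theta^{-p_0/p} t)\cap Q_k\right)\le(\theta/2)\,\omega(Q_k)
\]
whenever $Q_k$ meets $\{x:\mathcal{M}(|f|^{p_0}\chi_{4B_0\cap\Omega})(x)\le\gamma t\}$. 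Summing in $k$ and integrating in $t$ as in (\ref{real-7}), together with the weighted maximal inequality (\ref{maximal-2}) at exponent $p/p_0>1$, yields the reduced estimate.

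\textbf{Main obstacle.} The delicate point is the ball assignment together with the comparability $\omega(B_k)\asymp\omega(Q_k)$: the first relies on the Lipschitz cone condition (producing a nearby boundary point $y_k\in 2B_0\cap\partial\Omega$ together with the two-sided volume bound $|B(y_k,Cr_k)\cap\Omega|\asymp r_k^d$), and the second follows from this volume comparison combined with the doubling property of $\omega\in A_\infty$. Beyond this geometric bookkeeping, the rest is a cosmetic modification of the proof of Theorem~\ref{real-thm}, with $B$ and $2B$ replaced by $B\cap\Omega$ and $2B\cap\Omega$ throughout and with ``$100B$'' in (\ref{real-1})--(\ref{real-2}) replaced by the ``$8B\cap\Omega$'' enlargement that naturally covers each stopping cube.
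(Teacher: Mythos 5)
Your proposal is workable, but it takes a noticeably longer route than the paper. You re-run the entire Calder\'on--Zygmund/good-$\lambda$ machinery inside the domain: choose a cube $Q_0$ near $x_0$, perform the stopping-time decomposition, and assign to each stopping cube $Q_k$ an admissible ball (interior or boundary-centered) on which the hypotheses (\ref{real-1b})--(\ref{real-2b}) can be invoked. The paper instead reduces Theorem \ref{real-thm-b} to the already-proven interior Theorem \ref{real-thm} by extension by zero: it applies Theorem \ref{real-thm} to $\widetilde F=F\chi_{4B_0\cap\Omega}$ and $\widetilde f=f\chi_{4B_0\cap\Omega}$, and for each ball $B\subset 2B_0$ builds $\widetilde F_B,\widetilde R_B$ by a three-case analysis ($4B\cap\Omega=\emptyset$: take both zero; $4B\subset\Omega$: use the given $F_B,R_B$; $4B\cap\partial\Omega\ne\emptyset$: pick $y_0\in 4B\cap\partial\Omega$, set $\widetilde B=B(y_0,4r)$ and use $F_{\widetilde B}\chi_{4B_0\cap\Omega}$, $R_{\widetilde B}\chi_{4B_0\cap\Omega}$, noting $2B\subset 2\widetilde B\subset 12B$). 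This reduction is shorter and automatically disposes of the bookkeeping your sketch leaves implicit: in your version, Lemma \ref{CZ-lemma} must be applied to an open subset of the cube $Q_0$, not of $Q_0\cap\Omega$ as you wrote, so some stopping cubes $Q_k$ may fail to meet $\Omega$ or may lie at distance much larger than $\mathrm{diam}(Q_k)$ from $\partial\Omega$; for those no admissible ball of comparable size exists, and you must observe separately that $\mathcal{M}_{2B_k}(|F|^{p_0}\chi_{4B_0\cap\Omega})\equiv 0$ there, so such cubes contribute nothing to $E(\theta^{-p_0/p}t)$. With that point and the comparability $|B_k\cap\Omega|\asymp|B_k|$ (which your Lipschitz cone argument supplies) made explicit, your derivation closes, so the difference is one of economy rather than substance: the paper's extension trick buys a two-paragraph proof, while your direct re-derivation would be self-contained but essentially duplicates the proof of Theorem \ref{real-thm} with domain-adapted constants.
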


\begin{proof}
We shall apply Theorem \ref{real-thm} to the functions 
$$
\widetilde{F}=F\chi_{ 4B_0\cap \Omega}
\quad \text{  and }
\quad
\widetilde{f}=f\chi_{ 4B_0 \cap \Omega},
$$
 with $B_0=B(x_0, r_0)$.
Let $B$ be a ball such that $2B\subset 2B_0$ with $|B|\le \widetilde{c}_1 |B_0|$.
We need to construct two functions
$\widetilde{F}_B$ and $\widetilde{R}_B$, which satisfy 
$|\widetilde{F}| \le \widetilde{F}_B +\widetilde{R}_B$ on $2B$ and
the conditions (\ref{real-1}) and (\ref{real-2}).

We consider three cases: (1) $4B\cap \Omega =\emptyset$;
(2) $ 4B\subset \Omega $; and (3) $4B\cap \partial\Omega\neq \emptyset$.
 In the first case we simply define $F_B$ and $R_B$ to be zero.
 In the second case we let $\widetilde{F}_B=F_B$ and $\widetilde{R}_B=R_B$.
 To treat the third case, we assume $ B=B(z, r)$ and $y_0\in 4B\cap \partial\Omega$.
 Let $\widetilde{B}=B(y_0, 4r)$, 
 $$
 \widetilde{F}_B= F_{\widetilde{B}}\chi_{ 4B_0 \cap \Omega}
\quad
\text{ and }
\quad
\widetilde{R}_B= R_{\widetilde{B}}\chi_{ 4B_0 \cap \Omega }.
$$ 
 Since $2B\subset 2\widetilde{B}$ and $2\widetilde{B}\subset 12B$,
 it is not hard to verify the conditions in Theorem \ref{real-thm}. 
 As a result, the inequality \ref{real-3b}) follows from (\ref{real-3}).
\end{proof}

\begin{remark}\label{remark-L-b}
Let $0<\e< r_0$.
Suppose that conditions (\ref{real-1b}) and (\ref{real-2b}) hold only for balls $B$
with radius $r\ge \e$.
Then
\begin{equation}\label{real-3b-e}
\aligned
 &  \left( 
\fint_{ B_0\cap \Omega  } \big\{ \mathcal{M}_{4B_0}^\e (|F|^{p_0} \chi_{ 4B_0\cap \Omega} )  \big\}^{\frac{p}{p_0}} \omega \right)^{1/p}\\
 &  \le C \left(\fint_{ 4B_0\cap \Omega } |f|^p \omega\right)^{1/p}
+  C \left(\fint_{ 4B_0\cap \Omega } |F|^{p_0} \right)^{1/p_0}
\left( \fint_{ B_0 } \omega \right)^{1/p},
\endaligned
\end{equation}
where the operator $\mathcal{M}_{4B_0}^\e$ is defined by
(\ref{maximal-e}) and  $C$ is independent of $\e$.
See Remark \ref{remark-L}
\end{remark}

The rest of this section is devoted to the proof of Theorem \ref{main-thm-1}.
We begin by  giving  a condition equivalent to  (2) in Theorem \ref{main-thm-1}.

\begin{lemma}\label{lemma-m-1}
Let $\omega$ be an $A_1$ weight and  $\Omega$ a bounded Lipschitz domain.
Let $A$ a matrix satisfying (\ref{ellipticity}).
The following reverse H\"older condition is equivalent to (2) in Theorem \ref{main-thm-1}:  There exist
$p>2$ and $C_3>0$ such that
\begin{equation}\label{eqv-1}
\left(\fint_{B(x_0, r)\cap \Omega} 
| ( \nabla u) \omega^{1/2} |^{p} \right)^{1/p}
\le C_3  \fint_{B(x_0, 2r)\cap \Omega} | (\nabla u) \omega^{1/2}|,
\end{equation}
where  $u$ and $B(x_0, r)$ are  the same as in Theorem \ref{main-thm-1}.
\end{lemma}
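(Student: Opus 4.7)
The plan is to work with $w := |\nabla u|\,\omega^{1/2}$, so that (\ref{cond-1}) reads $\fint_{B\cap\Omega} w^{2}\le C_{2}\fint_{2B\cap\Omega}|\nabla u|^{2}\cdot\fint_{B}\omega$ and (\ref{eqv-1}) is the reverse H\"older inequality $\bigl(\fint_{B\cap\Omega}w^{p}\bigr)^{1/p}\le C_{3}\fint_{2B\cap\Omega}w$. The direction (\ref{eqv-1})$\Rightarrow$(\ref{cond-1}) is immediate: by Jensen (since $p>2$), the hypothesis (\ref{eqv-1}), and Cauchy--Schwarz,
\[
\fint_{B\cap\Omega} w^{2}\le\Bigl(\fint_{B\cap\Omega}w^{p}\Bigr)^{2/p}\le C\Bigl(\fint_{2B\cap\Omega}w\Bigr)^{2}\le C\fint_{2B\cap\Omega}|\nabla u|^{2}\cdot\fint_{2B\cap\Omega}\omega,
\]
after which the $A_{1}$-doubling of $\omega$ together with $|2B\cap\Omega|\gtrsim|2B|$ (from the Lipschitz character of $\Omega$) converts $\fint_{2B\cap\Omega}\omega$ into $\fint_{B}\omega$ up to a constant.

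The converse (\ref{cond-1})$\Rightarrow$(\ref{eqv-1}) is the substantial direction. My plan is to first establish a weak reverse H\"older inequality for $w$,
\[
\Bigl(\fint_{B\cap\Omega}w^{2}\Bigr)^{1/2}\le C\fint_{\sigma B\cap\Omega}w,
\]
for some $\sigma>1$, and then invoke Gehring's self-improving lemma (in its standard boundary version for Lipschitz domains) to upgrade the left-hand exponent to some $p>2$ and contract $\sigma$ back to $2$. To derive the weak inequality I would take square roots in (\ref{cond-1}), bound $(\fint_{2B\cap\Omega}|\nabla u|^{2})^{1/2}$ by $\fint_{\sigma B\cap\Omega}|\nabla u|$ through the classical $(L^{2},L^{1})$ reverse H\"older inequality for weak solutions of $\text{\rm div}(A\nabla u)=0$ (valid on interior balls and, with the zero-trace hypothesis, on boundary balls, since $u$ solves the PDE on the larger set $4B\cap\Omega$), and then absorb the factor $(\fint_{B}\omega)^{1/2}$ using the $A_{1}$ pointwise estimate $\omega^{1/2}\ge c(\fint_{B}\omega)^{1/2}$ a.e.\ on $\sigma B$, giving $(\fint_{B}\omega)^{1/2}\fint_{\sigma B\cap\Omega}|\nabla u|\le C\fint_{\sigma B\cap\Omega}w$.

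The main technical obstacle is the $(L^{2},L^{1})$ reverse H\"older inequality for $|\nabla u|$ itself. A single application of Caccioppoli's inequality together with Poincar\'e--Sobolev produces only $(\fint|\nabla u|^{2})^{1/2}\le C(\fint|\nabla u|^{2d/(d+2)})^{(d+2)/(2d)}$, whose right-hand exponent exceeds $1$ when $d\ge 3$. I would overcome this through the standard bootstrap for weak solutions: Meyers' theorem gives $|\nabla u|\in L^{p_{0}}_{\mathrm{loc}}$ for some $p_{0}>2$, after which H\"older interpolation between $L^{1}$ and $L^{p_{0}}$, Meyers to return the $L^{p_{0}}$ factor to $L^{2}$, and a Young-inequality absorption along a chain of concentric balls yield the desired $(L^{2},L^{1})$ estimate. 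The boundary version is obtained by the same techniques adapted to functions vanishing on the portion of $\partial\Omega$ inside the ball, which is compatible with the Lipschitz structure of $\partial\Omega$; Gehring's lemma then supplies the final exponent $p>2$.
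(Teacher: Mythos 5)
Your proposal is correct and follows essentially the same route as the paper: the easy direction is H\"older/Cauchy--Schwarz plus doubling, and the converse combines (\ref{cond-1}) with the classical $(L^2,L^1)$ reverse H\"older inequality for $\nabla u$ and the $A_1$ pointwise bound to get a weak $(L^2,L^1)$ reverse H\"older inequality for $|(\nabla u)\omega^{1/2}|$ on all sub-balls, which Gehring's self-improvement upgrades to (\ref{eqv-1}). The only difference is that you spell out a (valid, standard) Meyers-based proof of the $(L^2,L^1)$ reverse H\"older estimate for solutions, which the paper simply cites.
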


\begin{proof}
We first note that (\ref{cond-1})  follows readily from (\ref{eqv-1}) by using H\"older's inequality.
To see that  (\ref{cond-1}) implies (\ref{eqv-1}),
we consider the case $x_0\in \partial\Omega$.
The interior  case $B(x_0, 4r)\subset \Omega$ may be  handled similarly. 
Let  $B^\prime=B(y_0, t)$ be a ball such that $y_0\in B(x_0, r)\cap \Omega$ and $0<t<cr$.
Then
$$
\aligned
\left(\fint_{B^\prime\cap \Omega}
|(\nabla u) \omega^{1/2} |^2 \right)^{1/2}
&\le C \left(\fint_{8B^\prime\cap \Omega} |\nabla u|^2 \right)^{1/2}
\left(\fint_{B^\prime} \omega \right)^{1/2}\\
&\le C \fint_{9B^\prime\cap \Omega} |\nabla u| 
\left(\fint_{B^\prime} \omega \right)^{1/2}\\
&\le C  \fint_{9B^\prime\cap \Omega}
|(\nabla u)\omega^{1/2}|,
\endaligned
$$
where we have used (\ref{A-1}) for the last inequality, and $C$ depends only on $d$, $m$, $\mu$, $C_2$ ,
the $A_1$ constant of $\omega$ and the Lipschitz
character of $\Omega$.
This is a reverse H\"older inequality for the function $|(\nabla u)\omega^{1/2}|$.
By the self-improving property of such inequalities,
there exist $p>2$ and $C_3>0$, depending at most on 
$d$, $m$, $\mu$, $C_2$, the $A_1$ constant of $\omega$ and the Lipschitz character of $\Omega$, such that (\ref{eqv-1}) holds.
\end{proof}

\begin{lemma}\label{lemma-m-2}
Condition (2) implies Condition (1) in Theorem \ref{main-thm-1}.
\end{lemma}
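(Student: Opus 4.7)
\emph{Plan.} The strategy is to invoke the boundary real-variable estimate (Theorem \ref{real-thm-b}) with $p = 2$, $F = \nabla u$, source $f$, and the given weight $\omega$. Since $\omega \in A_1 \subset A_{2/p_0}$ for any $p_0 \in (1,2)$, the weight hypothesis is automatic. I would take $p_0 \in (1,2)$ from the lower Meyers exponent for the Dirichlet problem on $\Omega$ and on its scaled cutoff subdomains $8B \cap \Omega$, so that the $L^{p_0}$ well-posedness of $-\text{div}(A\nabla \cdot) = \text{div}(\cdot)$ holds uniformly in $B$. The exponent $p_1 > 2$ comes from the self-improved weighted reverse H\"older inequality (\ref{eqv-1}) of Lemma \ref{lemma-m-1}.

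For each admissible ball $B$, I would decompose $\nabla u = F_B + R_B$ as follows. Let $v \in H^1_0(8B \cap \Omega)$ solve $-\text{div}(A \nabla v) = \text{div}(f \chi_{8B \cap \Omega})$ with zero boundary data on $\partial(8B \cap \Omega)$, and set $F_B = \nabla v$, $R_B = \nabla u - F_B$. Meyers' estimate for $v$ gives
\[
\left( \fint_{2B \cap \Omega} |F_B|^{p_0} \right)^{1/p_0} \le C \left( \fint_{8B \cap \Omega} |f|^{p_0} \right)^{1/p_0},
\]
which verifies (\ref{real-1b}) with $\eta = 0$. The function $w = u - v$ is a weak solution of $\text{div}(A \nabla w) = 0$ in $8B \cap \Omega$ vanishing on $8B \cap \partial\Omega$ (for boundary balls), so the weighted reverse H\"older inequality (\ref{eqv-1}) applies to $w$. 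Combined with the $A_1$ property of $\omega$ (to pass between $|\nabla w|\omega^{1/2}$ and $|\nabla w|\, (\fint_B \omega)^{1/2}$), it yields
\[
\left( \fint_{2B \cap \Omega} |R_B|^{p_1} \omega \right)^{1/p_1} \le C \fint_{4B \cap \Omega} |R_B| \cdot \left( \fint_B \omega \right)^{1/p_1}.
\]
Using $|R_B| \le |\nabla u| + |F_B|$ together with the $F_B$ bound, this gives (\ref{real-2b}).

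Theorem \ref{real-thm-b} then produces a local weighted estimate on each boundary ball $B_0$; an analogous interior version from Theorem \ref{real-thm} handles balls deep in $\Omega$. Covering $\Omega$ by finitely many such balls, summing, and applying the global Meyers bound $\|\nabla u\|_{L^{p_0}(\Omega)} \le C \|f\|_{L^{p_0}(\Omega)}$ together with a weighted H\"older inequality (exploiting that $\omega^{-p_0/(2-p_0)}$ is integrable on the bounded domain $\Omega$, which follows from the $A_{2/p_0}$ condition on $\omega$) converts the tail term into a constant multiple of $(\int_\Omega |f|^2 \omega)^{1/2}$, giving (\ref{w-e-1}).

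The main obstacle I expect is the final absorption step: controlling the unweighted $L^{p_0}$ tail of $\nabla u$ in terms of the weighted $L^2$ norm of $f$. The needed ingredients --- Meyers' lower integrability with constants uniform across scaled cutoff subdomains $8B \cap \Omega$, and the integrability of $\omega^{-p_0/(2-p_0)}$ on $\Omega$ --- are standard, but careful bookkeeping is required so that the final constant $C_1$ depends only on $d$, $m$, $\mu$, the Lipschitz character of $\Omega$, the $A_1$ constant of $\omega$, and the constant $C_2$ in (\ref{cond-1}).
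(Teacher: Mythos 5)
Your proposal is correct and follows essentially the same route as the paper: apply Theorem \ref{real-thm-b} with $p=2$, $F=\nabla u$, split off a local Dirichlet solution driven by the truncated data to get the $L^{p_0}$ bound (\ref{real-1b}) with $\eta=0$ via Meyers, and control the remainder---a local solution vanishing on the boundary portion---through the self-improved reverse H\"older inequality of Lemma \ref{lemma-m-1}. The only deviations are cosmetic: the paper poses the auxiliary problem on all of $\Omega$ with right-hand side $f\varphi$ (which sidesteps your need for Meyers constants uniform over the subdomains $8B\cap\Omega$), and it absorbs the tail term more directly via the energy estimate together with the $A_1$ condition $\fint_{B_0}\omega\le C\inf_{B_0}\omega$ rather than your H\"older/$A_{2/p_0}$ duality argument.
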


\begin{proof}

Let $u\in H_0^1(\Omega)$ be a weak solution of (\ref{DP-1}) with $f\in L^\infty (\Omega)$.
To show (\ref{w-e-1}), we fix $x_0\in \partial \Omega$, $0<r_0=c_0\,  \text{\rm diam}(\Omega)$, and
apply Theorem \ref{real-thm-b} with  $p=2$, $f$ and $F=|\nabla u|$.
Let $B=B(y_0, r) $ be a ball with the properties  that $|B|\le c_1 r_0^d$ and either
$y_0\in B(x_0, 2r_0) \cap \partial \Omega$ or $4B\subset  B(x_0, 2r_0)\cap \Omega$.
Let $v\in H^1_0(\Omega)$ be the weak solution of
$-\text{\rm div}(A\nabla v)=\text{\rm div}( f\varphi)$ in $\Omega$, where
$\varphi\in C_0^\infty(5B)$ is a cut-off function such that
$0\le \varphi\le 1$ and $\varphi=1$ on $4B$.
Let
$$
F_B= |\nabla v| \quad \text{ and } \quad 
R_B = |\nabla (u-v)|.
$$
By the Myers estimate, there exists $1<p_0<2$, depending only on $d$, $m$, $\mu$ and the Lipschitz character of $\Omega$, such that
\begin{equation}\label{myers-5}
\| \nabla v\|_{L^{p_0}(\Omega)}
\le C \| \varphi f\|_{L^{p_0}(\Omega)}
\le C \|f\|_{L^{p_0}(5B\cap \Omega)},
\end{equation}
which yields (\ref{real-1b}) with $\eta=0$.
To see (\ref{real-2b}), we observe that
$\text{\rm div}(A\nabla (u-v))=0$ in $4B\cap \Omega$
and $u-v=0$ on $\partial\Omega$.
It follows from Lemma \ref{lemma-m-1} that 
$$
\aligned
\left(\fint_{ 2B\cap \Omega}
|(\nabla (u-v) ) \omega^{1/2} )|^{p_1} \right)^{1/p_1}
&
\le C \fint_{4B\cap \Omega}
|(\nabla (u-v))\omega^{1/2}|\\
 & \le C \left(\fint_{4B\cap \Omega}
|\nabla (u-v)|^{p_0} \right)^{1/p_0}
\left(\fint_{B} \omega\right)^{1/2},
\endaligned
$$
where $p_1>2$ and we have used (\ref{RH}) for the last inequality.
Hence,
$$
\aligned
\left(\fint_{ 2B\cap \Omega}
|R_B|^{p_1} \omega\right)^{1/p_1}
&\le C \left(\fint_{ {4B\cap \Omega}}
|\nabla (u-v)|^{p_0}  \right)^{1/p_0} 
\left(\fint_B \omega\right)^{1/p_1}\\
&\le 
C \left\{
\left(\fint_{ 4B\cap \Omega}
|F|^{p_0}
\right)^{1/p_0}
+
\left(\fint_{4B\cap \Omega } 
|\nabla v|^{p_0} \right)^{1/p_0}
\right\}
\left(\fint_B \omega\right)^{1/p_1}\\
&
\le 
C \left\{
\left(\fint_{ 4B\cap \Omega}
|F|^{p_0}
\right)^{1/p_0}
+
\left(\fint_{5B\cap \Omega } 
|f|^{p_0} \right)^{1/p_0}
\right\}
\left(\fint_B \omega\right)^{1/p_1},
\endaligned
$$
where we have used (\ref{myers-5}) for the last inequality.
Thus, by Theorem \ref{real-thm-b}, we obtain
$$
\aligned
\int_{ B(x_0, r_0)\cap \Omega}
|\nabla u|^2 \omega\, dx
& \le C\int_{ B(x_0, 4r_0)\cap \Omega} 
|f|^2 \omega\, dx
+ \int_{\Omega} |\nabla u|^2\, dx \fint_{B(x_0, r_0)} \omega\\
&\le C \int_\Omega |f|^2 \omega\, dx,
\endaligned
$$
where we have used the energy estimate and (\ref{A-1}) for the last inequality.
A similar argument shows that if $B(x_0, 4r_0)\subset \Omega$, where
$r_0=c_0\, \text{diam}(\Omega)$, then
$$
\int_{B(x_0, r_0)} |\nabla u|^2\omega\, dx
\le C \int_\Omega |f|^2\omega\, dx.
$$
By a simple covering argument we obtain (\ref{w-e-1}),
where $C_1$ depends only on $d$, $m$, $\mu$, $C_2$, the $A_1$ constant of $\omega$ and
the Lipschitz character of $\Omega$.
\end{proof}

We are now in a position to give the proof of Theorem \ref{main-thm-1}.

\begin{proof}[\bf Proof of Theorem \ref{main-thm-1}]

In view of Lemma \ref{lemma-m-2},  it remains to show that Condition (1) implies Condition (2).
Suppose $d\ge 3$.
Let $A^*$ denote the adjoint of $A$ and $\omega$ be an $A_1$ weight.
It follows by a duality argument from  (\ref{w-e-1}) that
 weak solutions in $H^1_0(\Omega)$  of $-\text{\rm div}(A^*\nabla v)=\text{\rm div} (g)$ in $\Omega$
satisfy
\begin{equation}\label{d-estimate}
\int_{\Omega}
|\nabla v|^2  \, \frac{dx}{\omega}
\le C \int_{\Omega} |g|^2\,  \frac{dx}{\omega},
\end{equation}
where $g\in L^\infty(\Omega) $.
This, together with  the weighted Sobolev inequality,
\begin{equation}\label{w-s}
\left(\int_{\Omega}
|v|^q\frac{dx}{\omega^{\frac{q}{2}}}
\right)^{1/q}
\le C 
\left(\int_{\Omega}
|\nabla v|^2
\frac{dx}{w }\right)^{1/2},
\end{equation}
where  $v\in H^1_0(\Omega)$ and $q=\frac{2d}{d-2}$, yields
$$
\left(\int_{\Omega}
|v|^q\frac{dx}{\omega^{\frac{q}{2}}}
\right)^{1/q}
\le C 
\left(\int_{\Omega}
|g|^2
\frac{dx}{w }\right)^{1/2}.
$$
We remark that the inequality (\ref{w-s}) holds for any $A_p$ weight with $p=2-\frac{2}{d}$ (see \cite{Sawyer-1992} and
earlier work in \cite{Kenig-1982}) . 
It follows by duality that weak solutions in $H^1_0 (\Omega)$  of $-\text{\rm div}(A\nabla u) =F$ in $\Omega$ satisfy
\begin{equation}\label{ES-00}
\left( \int_{\Omega} |\nabla u|^2 \omega\, dx \right)^{1/2}
\le C \left( \int_{\Omega} |F|^{q^\prime}  \omega^{\frac{q^\prime }{2}} \, dx \right)^{1/q^\prime},
\end{equation}
where $F\in L^\infty(\Omega) $  and $q^\prime =\frac{2d}{d+2}$.
Thus, if $u\in H^1_0(\Omega)$ is a weak solution of
\begin{equation}\label{DF}
-\text{\rm div}(A\nabla u) =\text{\rm div} (f) +F \quad \text{ in } \Omega,
\end{equation}
where $F, f \in L^\infty(\Omega)$,  then
\begin{equation}\label{w-e-r-0}
\left(\int_{\Omega}
|\nabla u|^2\omega\, dx \right)^{1/2}
\le C \left(\int_{\Omega}
|f|^2\omega\, dx \right)^{1/2}
+
C \left(\int_{\Omega} 
|F|^{q^\prime} \omega^{\frac{q^\prime}{2}}\, dx \right)^{1/q^\prime}.
\end{equation}
Let $f, F\in L^2(\Omega)$ such that the right-hand side of (\ref{w-e-r-0}) is finite.
By a density argument one may show that (\ref{w-e-r-0})  continues to hold for 
the weak solution $u$ in $H^1_0(\Omega)$  of (\ref{DF}).

In the case $d=2$ we replace the inequality (\ref{w-s}) by
\begin{equation}\label{w-s-2}
\left(\int_{ B(x_0, r)\cap \Omega}
 |v|^q \frac{dx}{\omega^{\frac{q}{2}} }\right)^{1/q}
\le C r^{\frac{2}{q} } \left(\int_\Omega |\nabla v|^2\frac{dx}{\omega} \right)^{1/2}
\end{equation}
for $v\in C_0^1(\Omega)$ and $x_0\in \overline{\Omega}$,
which holds for any $2<q<\infty$ and  $\omega \in A_1(\mathbb{R}^2)$.
It follows by duality  that if supp$(F)\subset B(x_0,r)$, the weak solution of
$-\text{\rm div}(A\nabla u)=F$ in $\Omega$ with $u=0$ on $\partial\Omega$
satisfies 
\begin{equation}\label{ES-00-2}
\left( \int_{\Omega} |\nabla u|^2 \omega\, dx \right)^{1/2}
\le C r^{2-\frac{2}{q^\prime}}
 \left( \int_{ B(x_0, r)\cap \Omega}
  |F|^{q^\prime}  \omega^{\frac{q^\prime }{2}} \, dx \right)^{1/q^\prime}.
\end{equation}
As a result, the estimate (\ref{w-e-r-0}) is replaced by
\begin{equation}\label{w-e-r-2}
\left(\int_{\Omega}
|\nabla u|^2\omega\, dx \right)^{1/2}
\le C \left(\int_{\Omega}
|f|^2\omega\, dx \right)^{1/2}
+
Cr^{2-\frac{2}{q^\prime}}
 \left(\int_{ B(x_0, r)\cap \Omega}
|F|^{q^\prime} \omega^{\frac{q^\prime}{2}}\, dx \right)^{1/q^\prime}.
\end{equation}

Finally, to prove (\ref{cond-1}),
 we let   $u\in H^1(4B\cap \Omega)$ be a weak solution of $-\text{\rm div} (A\nabla u)=0$ in $4B\cap\Omega$,
 with $u=0$ on $4B\cap \partial\Omega$, where $B=B(x_0, r)$, $0<r<c_0\,  \text{diam}(\Omega)$, and 
 either $x_0\in \partial\Omega$ or $4B\subset \Omega$.
For $1<s<t<2$,
let $\varphi\in C_0^\infty(tB)$ be a cut-off function such that
$0\le \varphi\le 1$, $\varphi=1$ on $sB$, and $|\nabla \varphi|\le C ((t-s)r) ^{-1}$.
Note that
$$
-\text{\rm div}(A\nabla ((u-k)  \varphi))
=-\text{\rm div}(A(\nabla \varphi) (u-k) )
-A\nabla u\cdot \nabla \varphi,
$$
where $k\in \mathbb{R}$.
It follows from (\ref{w-e-r-0})  that if $d\ge 3$,
$$
\aligned
& \left(\int_{sB\cap \Omega}
|\nabla u|^2\omega\, dx \right)^{1/2}\\
 & \le \frac{C}{(t-s) r}
 \left\{ 
\left(\int_{tB\cap \Omega} |u-k|^2\omega\, dx \right)^{1/2}
+
\left(\int_{tB\cap \Omega}
|\nabla u|^{\frac{2d}{d+2}}
\omega^{\frac{d}{d+2}} \, dx \right)^{\frac{d+2}{2d}}\right\} \\
& \le \frac{C}{(t-s) r}
\left(\int_{tB\cap \Omega}
|\nabla u|^{\frac{2d}{d+2}}
\omega^{\frac{d}{d+2}} \, dx \right)^{\frac{d+2}{2d}},
\endaligned
$$
where we have let  $k=\fint_{tB} u$ and used  the weighted Sobolev inequality (see \cite{Kenig-1982, Sawyer-1992}),
\begin{equation}\label{w-s-l}
\left(\int_{tB\cap \Omega} |u-k|^2\omega\, dx \right)^{1/2}
\le C 
\left(\int_{tB\cap \Omega}
|\nabla u|^{\frac{2d}{d+2}}
\omega^{\frac{d}{d+2}} \, dx \right)^{\frac{d+2}{2d}}
\end{equation}
for $\omega\in A_1(\mathbb{R}^d)$.
As a result, we obtain 
\begin{equation}\label{nec-1}
\left(
\fint_{sB\cap\Omega}
|(\nabla u) \omega^{1/2}|^2 \right)^{1/2}
\le 
\frac{C}{t-s}
\left(\fint_{tB\cap \Omega}
|(\nabla u) \omega^{1/2}|^\frac{2d}{d+2} \right)^{\frac{d+2}{2d}}
\end{equation}
for $1<s<t<2$.
By a convexity argument this implies that
\begin{equation}\label{nec-2}
\left(
\fint_{B\cap\Omega}
|(\nabla u) \omega^{1/2}|^2 \right)^{1/2}
\le C
\fint_{2B\cap \Omega}
|(\nabla u) \omega^{1/2}|.
\end{equation}
The inequality (\ref{cond-1}) follows from (\ref{nec-2}) by the Cauchy inequality.

In the case $d=2$ we use (\ref{w-e-r-2}) in the place of (\ref{w-e-r-0}) , and 
\begin{equation}\label{w-s-l-2}
\left(\int_{tB\cap \Omega} |u-k|^2\omega\, dx \right)^{1/2}
\le C r^{1-\frac{2}{q^\prime}}
\left(\int_{tB\cap \Omega}
|\nabla u|^{q^\prime}
\omega^{\frac{q^\prime}{2}} \, dx \right)^{1/q^\prime}
\end{equation}
in the place of (\ref{w-s-l}), where $1<q^\prime<2$.
This gives
\begin{equation}\label{nec-1-1}
\left(
\fint_{sB\cap\Omega}
|(\nabla u) \omega^{1/2}|^2 \right)^{1/2}
\le 
\frac{C}{t-s}
\left(\fint_{tB\cap \Omega}
|(\nabla u) \omega^{1/2}|^{q^\prime} \right)^{1/q^\prime}
\end{equation}
for any $1<q^\prime<2$, which leads to (\ref{nec-2}), as in the case $d\ge 3$.
\end{proof}

%%%%%%%%%%%%%%%%%%%%%%%%%%%%%%%%%%%%

\section{Weighted $L^2$ estimates at the small scale}\label{section-small}

In this section we give the proof of Theorem \ref{main-thm-2} for the case $\e=1$.
The periodicity condition (\ref{periodicity})  is not needed.
Recall that a  function $h\in L^1_{\loc}(\mathbb{R}^d)$ is said to belong to VMO$(\mathbb{R}^d)$ if 
$\rho(r; h)\to 0$ as $r\to 0$, where
\begin{equation}\label{VMO}
\rho(r; h)=
\sup_{
\substack{ 
{x\in\mathbb{R}^d}
\\
0<t\le r}}
\fint_{B(x, t)}
\Big |h -\fint_{B(x, t)}
h \Big|.
\end{equation}

\begin{thm}\label{thm-local}
Let $\omega$ be an $A_1$ weight and  
$\Omega$ a bounded Lipschitz domain with $\text{\rm diam}(\Omega)\le 1$.
Let $A$ be a matrix satisfying (\ref{ellipticity}) and $A\in \text{\rm VMO}(\mathbb{R}^d)$.
Suppose that Condition (1) in Theorem \ref{main-thm-1} holds for all constant matrices  $\overline{A}$ 
obtained from $A$ by averaging it over a ball.
Then the condition holds for the matrix $A$.
\end{thm}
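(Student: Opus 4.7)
The plan is to apply the boundary real-variable method (Theorem \ref{real-thm-b}) to $F=|\nabla u|$ and the data $f$, with $p=2$, exponent $p_0\in(1,2)$ close to $2$ (furnished by Meyers' higher-integrability for $A$), and $p_1>2$ (furnished by Lemma \ref{lemma-m-1}). The hypothesis that Condition (1) of Theorem \ref{main-thm-1} holds for every $\overline{A}_B=\fint_B A$, combined with the equivalence of Conditions (1) and (2) in Theorem \ref{main-thm-1} and Lemma \ref{lemma-m-1}, supplies a single $p_1>2$ and constant $C_3$, uniform over the family $\{\overline{A}_B\}$ because each $\overline{A}_B$ inherits the ellipticity constants of $A$, so that any $\tilde w$ solving $-\text{\rm div}(\overline{A}_B\nabla\tilde w)=0$ in $8B\cap\Omega$ with $\tilde w=0$ on $8B\cap\partial\Omega$ satisfies the weighted reverse H\"older inequality on $2B\cap\Omega$. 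The VMO hypothesis enters only through the interpolation bound $(\fint_B|A-\overline{A}_B|^q)^{1/q}\le C_q(\rho(r;A))^{1/q}$, valid for any $q<\infty$ since $A\in L^\infty$.

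For each admissible ball $B=B(y_0,r)$ with $r$ below a threshold $r_\ast$ to be fixed, I would decompose $u$ as follows. Pick a cutoff $\varphi\in C_0^\infty(9B)$ with $\varphi\equiv 1$ on $8B$, and let $v\in H^1_0(\Omega)$ be the weak solution of $-\text{\rm div}(A\nabla v)=\text{\rm div}(f\varphi)$. Set $w=u-v$, so that $-\text{\rm div}(A\nabla w)=0$ in $8B\cap\Omega$ with $w=0$ on $8B\cap\partial\Omega$. Freeze the coefficients: let $\tilde w\in H^1(8B\cap\Omega)$ solve $-\text{\rm div}(\overline{A}_B\nabla\tilde w)=0$ with $\tilde w-w\in H^1_0(8B\cap\Omega)$. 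Define
\begin{equation*}
F_B=|\nabla v|+|\nabla(w-\tilde w)|,\qquad R_B=|\nabla\tilde w|
\end{equation*}
on $2B\cap\Omega$, so that $|F|\le F_B+R_B$. Meyers' estimate gives $\|\nabla v\|_{L^{p_0}(\Omega)}\le C\|f\|_{L^{p_0}(9B\cap\Omega)}$, contributing the $\|f\|$-piece of (\ref{real-1b}). The reverse H\"older inequality for $\tilde w$ recalled above, combined with the $A_1$ property of $\omega$ (used to replace $\omega^{p_1/2}$ by $\omega$ and extract the factor $(\fint_B\omega)^{1/p_1}$) and a Caccioppoli-type estimate bounding $\|\nabla\tilde w\|_{L^{p_0}(4B\cap\Omega)}$ by $\|\nabla w\|_{L^{p_0}(8B\cap\Omega)}$, yields (\ref{real-2b}) for $R_B$ after writing $|\nabla w|\le|F|+|\nabla v|$ and invoking Meyers once more for $\nabla v$.

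The heart of the matter is the perturbation estimate for $\nabla(w-\tilde w)$, which is where the small $\eta$ in (\ref{real-1b}) is produced. Since $-\text{\rm div}(\overline{A}_B\nabla(w-\tilde w))=\text{\rm div}((A-\overline{A}_B)\nabla w)$ in $8B\cap\Omega$ with zero Dirichlet data, the energy inequality gives
\begin{equation*}
\bigl\|\nabla(w-\tilde w)\bigr\|_{L^2(8B\cap\Omega)}\le C\bigl\|(A-\overline{A}_B)\nabla w\bigr\|_{L^2(8B\cap\Omega)}.
\end{equation*}
H\"older with a Meyers self-improvement exponent $2q>2$ for $\nabla w$ on $4B\cap\Omega$, together with the VMO bound on $A-\overline{A}_B$ in $L^{2q'}$ and the reverse H\"older/Gehring improvement from $L^{p_0}$ to $L^{2q}$ for $\nabla w$ on $4B\cap\Omega$ (available because $w$ solves the homogeneous equation in the larger ball $8B\cap\Omega$ with zero Dirichlet data on $8B\cap\partial\Omega$), produces a bound of the form $C\,\rho(r;A)^\alpha(\fint_{8B\cap\Omega}|\nabla w|^{p_0})^{1/p_0}$ for some $\alpha>0$. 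Splitting $|\nabla w|\le|F|+|\nabla v|$ and absorbing $\nabla v$ via Meyers against $f$ then delivers the $\eta\cdot(\fint_{8B\cap\Omega}|F|^{p_0})^{1/p_0}$ term of (\ref{real-1b}) together with additional $\|f\|$-terms. Taking $r_\ast$ so small that $C\,\rho(r_\ast;A)^\alpha\le\eta_0$ (possible by VMO) secures $\eta\le\eta_0$, and a finite cover of $\Omega$ by admissible balls of radius $\sim r_\ast$, Theorem \ref{real-thm-b} on each, together with absorption of the $(\fint_{4B_0}|F|^{p_0})^{1/p_0}$-tail in (\ref{real-3b}) via the global energy estimate, completes the proof.

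The main obstacle, as in earlier VMO arguments for $W^{1,p}$ theory, is coordinating two reverse-H\"older self-improvements simultaneously: the unweighted Meyers/Gehring gain above $2$ for $\nabla w$ --- needed so that the VMO smallness of $A-\overline{A}_B$ in $L^{2q'}$ dominates the $L^{2q}$ norm of $\nabla w$ --- and the weighted gain above $2$ for $|\nabla\tilde w|\omega^{1/2}$ from Lemma \ref{lemma-m-1}, whose exponent $p_1$ and constant depend on the $A_1$ constant of $\omega$. Both gains are uniform over $\{\overline{A}_B\}$ by the shared ellipticity constants, but $p_0$ must be chosen close enough to $2$ for the $A_1$-based conversion between $\omega$ and $\omega^{p_1/2}$ in the $R_B$-estimate to be valid.
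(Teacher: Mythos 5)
Your proposal is correct in substance and runs on the same machinery as the paper's proof: Theorem \ref{real-thm-b} with $p=2$ and a Meyers exponent $p_0<2$, coefficient freezing $\overline{A}_B=\fint A$, VMO smallness (via the John--Nirenberg/interpolation bound) to produce the parameter $\eta$, and Lemma \ref{lemma-m-1} applied to the frozen constant-coefficient operator to obtain the weighted $L^{p_1}$ bound for $R_B$. The structural difference is that you prove Condition (1) directly, in one pass, with a three-piece decomposition $|\nabla u|\le |\nabla v|+|\nabla(w-\tilde w)|+|\nabla\tilde w|$ applied to the inhomogeneous Dirichlet problem, whereas the paper verifies Condition (2) for $A$ --- so that $f=0$ throughout, $F_B=|\nabla(u-v)|$ and $R_B=|\nabla v|$ with $v$ the frozen-coefficient solution matching $u$ on $\partial(3B\cap\Omega)$ --- and then concludes by the already-established implication (2)$\Rightarrow$(1) (Lemma \ref{lemma-m-2}). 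The paper's route is slightly cleaner: the data $f$ never enters the perturbation step, and the smallness estimate is carried out in $L^{p_0}$ directly from the Meyers estimate for the divergence-form boundary value problem, so only one reverse H\"older self-improvement (for $\nabla u$, from $L^2$ down to $L^{p_0}$ on a larger ball) is needed rather than your additional Gehring gain above $2$.

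Two points in your writeup need repair. First, the energy estimate $\|\nabla(w-\tilde w)\|_{L^2(8B\cap\Omega)}\le C\|(A-\overline{A}_B)\nabla w\|_{L^2(8B\cap\Omega)}$ forces you to control $\nabla w$ in $L^{2q}$ on the \emph{same} region $8B\cap\Omega$, but the Gehring improvement is available only on strictly smaller sets; you must freeze the coefficients on a smaller region (say $4B\cap\Omega$, as the paper freezes on $3B\cap\Omega$) so that the higher integrability inherited from $8B\cap\Omega$ is actually usable where H\"older is applied. Second, your claim that $p_1$ and $C_3$ are uniform over the family $\{\overline{A}_B\}$ ``because each $\overline{A}_B$ inherits the ellipticity constants'' cannot be right as stated: if ellipticity alone implied the weighted reverse H\"older inequality for a given $A_1$ weight, the hypothesis of the theorem would be vacuous. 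The constants in Lemma \ref{lemma-m-1} depend on the constant $C_1$ in Condition (1) for each individual $\overline{A}_B$, so the hypothesis must be understood as holding with constants uniform over the (bounded) family of averaged matrices; the paper leaves this same point implicit, but your explicit justification for it is incorrect.
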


\begin{proof}

By Theorem \ref{main-thm-1} it suffices to prove Condition (2).
Let $B_0=B(x_0, r_0)$ be a ball with $x_0\in \overline{\Omega}$ and
$0<r_0< c_0$.
Let $u\in H^1(4B_0\cap \Omega)$ be a weak solution of 
$\text{\rm div}(A\nabla u)=0$ in $4B_0\cap \Omega$ with $u_\e=0$ on $4B_0\cap \partial\Omega$.
We will prove the inequality (\ref{cond-1}).

We consider the case $x_0\in \partial\Omega$ and use Theorem \ref{real-thm-b} with
$
F=|\nabla u|   \text{ and }  f=0
$
on $4B_0\cap \Omega$.
Let $B=B(y_0, r)$ be a ball such that  $|B|\le c_1 |B_0|$ and either $y_0\in 2B_0\cap \Omega$ or
$4B\subset 2B_0\cap \Omega$.
Again, we consider the case $y_0\in 2B_0\cap\partial  \Omega$ (the interior case is similar).
To construct $F_B$ and $R_B$, we let $v\in H^1(3B\cap \Omega)$ be the weak solution
of 
$$
\text{\rm div}(\overline{A}\nabla v)=0 \quad \text{ in } 3B \cap \Omega
\quad 
\text{ and } \quad
v=u \quad 
\text{ on } \partial(3B\cap \Omega),
$$
where
\begin{equation}\label{A-bar}
\overline{A}=\fint_{3B} A.
\end{equation}
Clearly, the constant matrix $\overline{A}$ satisfies the condition  (\ref{ellipticity}).
Define
$$
F_B= |\nabla (u-v) | \quad \text{ and } \quad 
R_B=|\nabla v|.
$$
Note that $u-v=0$ on $\partial(3B\cap \Omega)$ and 
$$
-\text{\rm div} (\overline{A}\nabla (u-v)) 
=\text{\rm div} ((A-\overline{A}) \nabla u) \quad \text{ in } 3B\cap \Omega.
$$
By the Myers estimates, there exist $1<p_0<2$, and $C>0$, depending only on $d$, $m$, $\mu$ and
the Lipschitz character of $\Omega$, such that
$$
\aligned
\fint_{3B\cap \Omega}
|\nabla (u-v) |^{p_0}\, dx
 & \le C \fint_{3B\cap\Omega} |A-\overline{A}|^{p_0}  |\nabla u|^{p_0}\, dx\\
 &\le 
 C \left(\fint_{3B} |A-\overline{A}|^{\frac{2p_0}{2-p_0}}\right)^{\frac{2-p_0}{2}}
 \left(\fint_{3B\cap \Omega}
 |\nabla u|^2 \right)^{\frac{p_0}{2}},
 \endaligned
$$
where we have used H\"older's inequality for the last inequality.
It follows that
\begin{equation}\label{local-10}
\left(\fint_{2B\cap \Omega}
|F_B|^{p_0}\right)^{1/p_0}
\le C\widetilde{\rho} (c_0)
\left(\fint_{4B\cap \Omega}
|F|^{p_0}\right)^{1/p_0},
\end{equation}
where
\begin{equation}\label{rho-t}
\widetilde{\rho}(c_0)=
\sup_{
\substack{ 
{x\in\mathbb{R}^d}
\\
0<t\le c_0}}
\left( \fint_{B(x, t)}
\Big |A -\fint_{B(x, t)} 
A \Big|^{\frac{2p_0}{2-p_0}}
\right)^{\frac{2-p_0}{2p_0}}.
\end{equation}
By the John-Nirenberg  inequality, 
we see that $\widetilde{\rho}(c_0)\to 0$ as $c_0\to 0$.
This implies that the function $F_B$ satisfies the condition (\ref{real-1b}) in Theorem \ref{real-thm-b}
if $c_0>0$ is sufficiently small.

Finally, we note that by the assumption,
Condition (2) in Theorem \ref{main-thm-1} holds for the matrix $\overline{A}$ given by (\ref{A-bar}).
In view of Lemma \ref{lemma-m-1},
there exist $p_1>2$ and $C_3>0$ such that
\begin{equation}\label{local-11}
\left(
\fint_{2B\cap \Omega}
| (\nabla v) \omega^{1/2}|^{p_1} \right)^{1/p_1}
\le  C_3
\fint_{4B\cap \Omega} |(\nabla v) \omega^{1/2}|,
\end{equation}
from which the condition (\ref{real-2b}) in Theorem \ref{real-thm-b} follows readily.
Hence, by Theorem \ref{real-thm-b}, we obtain 
$$
\left(\fint_{B_0\cap \Omega}
|\nabla u|^2\omega \right)^{1/2}
\le C \left(\fint_{4B_0\cap \Omega} 
|\nabla u|^2 \right)^{1/2}
\left(\fint_{B_0} \omega \right)^{1/2},
$$
which is equivalent to (\ref{cond-1}) by s simple covering  argument.
\end{proof}

\begin{remark}\label{remark-D}

Theorem \ref{thm-local} continues to hold if diam$(\Omega)>1$.
However, in this case,
 the constants $C$ will depend on diam$(\Omega)$.

\end{remark}

\begin{remark}\label{remark-C-1}

Let $\Omega$ be a bounded $C^1$ domain.
Then Condition (2) in Theorem \ref{main-thm-1} holds for any $A_1$ weight $\omega$ and
for any matrix $A$ satisfying (\ref{ellipticity}) and $A\in \text{VMO}(\mathbb{R}^d)$.
Indeed, let $B=B(x_0, r)$, where  $x_0\in \partial\Omega$
and $0<r< c_0 \, \text{\rm diam}(\Omega)$. 
Suppose that  $\text{\rm div}(A\nabla u)=0$ in $4B\cap \Omega$ and 
$u=0$ on $4B \cap \partial\Omega$. Then
$$
\left(\fint_{B\cap \Omega}
|\nabla u|^p \right)^{1/p}
\le C_p \left(\fint_{2B\cap \Omega}
|\nabla u|^2 \right)^{1/2}
$$
for any $p>2$.
By using H\"older's inequality and (\ref{RH}) we obtain  (\ref{cond-1}).
Consequently, by  Theorem \ref{main-thm-1}, the weighted  inequality (\ref{w-e}) holds
if $\Omega$ is $C^1$, $\omega\in A_1(\mathbb{R}^d)$ and
$A\in \text{VMO}(\mathbb{R}^d)$  satisfies (\ref{ellipticity}).
This result is not new and was already proved in \cite{Shen-2005}.

\end{remark}

%%%%%%%%%%%%%%%%%%%%%%%%%%%%%%%%%%%%%%%%%%%%%%%%%%%

\section{Large-scale weighted estimates in homogenization }\label{section-large}

In this section we give the proof of Theorem \ref{main-thm-2}.
We begin by introducing some notations as well as some  approximation results  from  the homogenization theory.
We should point out that Lemmas \ref{app-lemma-1}, \ref{app-lemma-3}, \ref{app-lemma-5} and \ref{app-lemma-6}
below are not new.
They have become more or less  standard in the quantitative homogenization theory (see e.g. \cite{ Otto-Fisher, Shen-book, Armstrong-book}). 
Throughout this section, unless otherwise indicated,
we assume  $A$ is a matrix  satisfying  the ellipticity  conditions (\ref{ellipticity}) and
the periodicity condition  (\ref{periodicity}).
Let  $\chi (y) =(\chi_j^{\alpha\beta} (y) )\in H^1_{\loc}(\mathbb{R}^d)$ denote  the correctors for the operator,
\begin{equation}\label{op}
\mathcal{L}_\e =-\text{\rm div} (A(x/\e) \nabla ).
\end{equation}
More precisely, 
 for each $1\le j\le d$ and $1\le \beta\le m$,
$\chi_j^\beta = (\chi_j^{1\beta}, \chi_j^{2\beta}, \dots, \chi_j^{m\beta}) $  is the unique solution
of the following problem:
\begin{equation}\label{corrector}
\left\{
\aligned
& -\text{\rm div} (A\nabla \chi_j^\beta)=\text{\rm div}(A\nabla P_j^\beta) \quad \text{ in } \mathbb{R}^d,\\
 & \chi_j^\beta \text{ is 1 -periodic},\\
 &\int_Y \chi_j^\beta (y)\, dy=0.
 \endaligned
 \right.
 \end{equation}
 In (\ref{corrector}) we have used notation  $Y=[0, 1)^d$ and
 $P_j^\beta =y_j (0, \dots, 1, \dots, 0)$ with $1$ in the $\beta^{th}$ place.
 The homogenized operator is given by $\mathcal{L}_0=-\text{\rm div}(\widehat{A}\nabla)$,
 where $\widehat{A}=(\widehat{a}_{ij}^{\alpha\beta})$ and
 $$
 \widehat{a}_{ij}^{\alpha\beta}
 =\fint_Y \Big[
 a_{ij}^{\alpha\beta}
 + a_{ik}^{\alpha\gamma} \frac{\partial}{\partial y_k} \Big(\chi_j^{\gamma\beta} \Big) \Big]\, dy,
 $$
 where the repeated indices are summed.
 Let
 \begin{equation}\label{B}
 b_{ij}^{\alpha\beta}
 =a_{ij}^{\alpha\beta}
 + a_{ik}^{\alpha\gamma} 
 \frac{\partial}{\partial y_k} \big( \chi_j^{\gamma \beta}\big)
 -\widehat{a}_{ij}^{\alpha\beta}.
 \end{equation}
 Note that
 $$
 \int_Y b_{ij}^{\alpha\beta}\, dy=0 \quad 
 \text{ and } \quad
 \frac{\partial}{\partial y_i}
 \big( b_{ij}^{\alpha\beta} \big)=0.
 $$
 There exist 1-periodic functions  $\phi_{kij}^{\alpha\beta}\in H^1_{\loc}(\mathbb{R}^d) $ such that
 $\int_Y \phi_{kij}^{\alpha\beta} \, dy=0$,
 \begin{equation}\label{flux}
 b_{ij}^{\alpha\beta}
 =\frac{\partial}{\partial y_k} \big( \phi_{kij}^{\alpha\beta} \big)
 \quad \text{ and } \quad
 \phi_{kij}^{\alpha\beta}=-\phi_{ikj}^{\alpha\beta},
 \end{equation}
 where $1\le i, j, k\le d$ and $1\le \alpha, \beta\le m$.
 The functions $\phi=(\phi_{kij}^{\alpha\beta})$ are called flux or dual correctors.
 
Let $u_\e \in H^1(\Omega)$ be a weak solution of the Dirichlet problem,
\begin{equation}\label{DP-10}
\text{\rm div}(A(x/\e)\nabla u_\e)=0 \quad \text{ in }  \Omega \quad \text{ and } \quad  
u_\e=g \quad   \text{ on } \partial\Omega.
\end{equation}
Let $u_0$ be the solution of the homogenized problem,
\begin{equation}\label{DP-h-10}
\text{\rm div}(\widehat{A} \nabla u_0)=0 \quad \text{ in }  \Omega \quad \text{ and } \quad  
u_0=g \quad   \text{ on } \partial\Omega.
\end{equation}
Consider 
\begin{equation}\label{w}
w_\e
=u_\e -u_0 -\e \chi(x/\e) 
 (\eta_\e \nabla u_0),
\end{equation}
where $\eta_\e\in C_0^\infty(\Omega)$ is a cut-off function such that
$0\le \eta_\e\le 1$, $|\nabla \eta_\e |\le C/\e$,
$\eta_\e (x) =0$ if dist$(x, \partial\Omega)< 4\e$,
$\eta(x)=1$ if $x\in \Omega$ and dist$(x, \partial\Omega)\ge 5\e$.
Let 
\begin{equation}\label{Sigma}
\Sigma_t 
=\big\{ x\in \Omega: \text{\rm dist}(x, \partial\Omega) < t \big\},
\end{equation}
where $0<t< \text{\rm diam}(\Omega)$.
 
\begin{lemma}\label{app-lemma-1}
Suppose $A$ satisfies (\ref{ellipticity}) and (\ref{periodicity}).
Also assume that $A\in \text{\rm VMO} (\mathbb{R}^d)$ if $m\ge 2$ and $d\ge 3$.
Let $\Omega$ be a bounded Lipschitz domain with $1\le \text{diam}(\Omega)\le 10$.
Let $w_\e$ be given by (\ref{w}).
Then for any $0<\e<1$,
\begin{equation}\label{app-1-0}
\|\nabla w_\e \|_{L^2(\Omega)}
\le C \Big\{ \|\nabla u_0\|_{L^2(\Sigma_{5\e})}
+ \e \|\nabla^2 u_0\|_{L^2(\Omega\setminus \Sigma_{4\e)}} \Big\},
\end{equation}
where $C$ depends only on $d$, $m$,  $\mu$, the function $\rho$ in (\ref{VMO}) (if $m\ge 2$ and $d\ge 3$),  and the Lipschitz character of $\Omega$.
\end{lemma}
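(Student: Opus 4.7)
The plan is to derive the classical two-scale expansion identity for $\mathcal{L}_\e w_\e$ expressed in divergence form and then apply the basic energy estimate using the fact that $w_\e\in H^1_0(\Omega)$. Note first that $w_\e$ vanishes on $\partial\Omega$: this is because $u_\e=u_0=g$ on $\partial\Omega$ and the cut-off $\eta_\e$ is identically zero in $\Sigma_{4\e}\supset\partial\Omega$. Thus testing $\mathcal{L}_\e w_\e$ against $w_\e$ is legitimate.

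First I would compute
$$
\mathcal{L}_\e w_\e=-\operatorname{div}(A(x/\e)\nabla u_\e)+\operatorname{div}(A(x/\e)\nabla u_0)+\operatorname{div}(A(x/\e)\nabla[\e\chi(x/\e)\eta_\e\nabla u_0]).
$$
The first term is zero, and using $\operatorname{div}(\widehat{A}\nabla u_0)=0$ together with the splitting $1=\eta_\e+(1-\eta_\e)$, the remaining terms can be reorganized as
$$
\mathcal{L}_\e w_\e=\partial_i\bigl\{b_{ij}^{\alpha\beta}(x/\e)\eta_\e\partial_j u_0^\beta\bigr\}+\partial_i\bigl\{(a_{ij}^{\alpha\beta}(x/\e)-\widehat{a}_{ij}^{\alpha\beta})(1-\eta_\e)\partial_j u_0^\beta\bigr\}+\partial_i\bigl\{\e a_{ik}^{\alpha\gamma}(x/\e)\chi_j^{\gamma\beta}(x/\e)\partial_k(\eta_\e\partial_j u_0^\beta)\bigr\},
$$
where $b_{ij}^{\alpha\beta}$ is as in (\ref{B}). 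Now I would invoke the flux corrector representation (\ref{flux}) to write $b_{ij}^{\alpha\beta}(x/\e)=\e\partial_{x_k}[\phi_{kij}^{\alpha\beta}(x/\e)]$ and use the skew-symmetry $\phi_{kij}=-\phi_{ikj}$ against the symmetry of $\partial_i\partial_k$ to eliminate the highest-order term, converting the first summand into
$$
-\e\,\partial_i\bigl\{\phi_{kij}^{\alpha\beta}(x/\e)\,\partial_k(\eta_\e\partial_j u_0^\beta)\bigr\}.
$$
The net outcome is $\mathcal{L}_\e w_\e=\operatorname{div}(H_\e)$ with
$$
H_\e=-\e\phi(x/\e)\nabla(\eta_\e\nabla u_0)+(A(x/\e)-\widehat{A})(1-\eta_\e)\nabla u_0+\e A(x/\e)\chi(x/\e)\nabla(\eta_\e\nabla u_0).
$$

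Next I would apply the energy inequality: since $w_\e\in H^1_0(\Omega)$, ellipticity (\ref{ellipticity}) gives $\|\nabla w_\e\|_{L^2(\Omega)}\le C\|H_\e\|_{L^2(\Omega)}$. To bound $\|H_\e\|_{L^2}$, I would split $\nabla(\eta_\e\nabla u_0)=(\nabla\eta_\e)\nabla u_0+\eta_\e\nabla^2 u_0$ and note that $\nabla\eta_\e$ is supported in $\Sigma_{5\e}\setminus\Sigma_{4\e}$ with $|\nabla\eta_\e|\le C/\e$, while $\eta_\e\nabla^2 u_0$ is supported in $\Omega\setminus\Sigma_{4\e}$. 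The middle term $(A-\widehat{A})(1-\eta_\e)\nabla u_0$ is supported in $\Sigma_{5\e}$ and bounded by $C|\nabla u_0|$. Assembling these contributions yields exactly the right-hand side of (\ref{app-1-0}).

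The key obstacle — and the only place where the VMO hypothesis for $m\ge 2, d\ge 3$ is used — is the need for uniform $L^\infty$ bounds on the correctors $\chi(x/\e)$ and the flux correctors $\phi(x/\e)$, since the terms in $H_\e$ carry these factors pointwise. In the scalar case $m=1$ or when $d=2$, De Giorgi--Nash--Moser or Meyers-type estimates deliver $\chi,\phi\in L^\infty$ directly from boundedness and periodicity; in the systems case $m\ge 2,d\ge 3$, this fails for merely measurable $A$, and one invokes the Schauder-type $C^{0,\alpha}$ estimates valid for VMO coefficients (together with the Poincaré inequality on $Y$) to conclude $\|\chi\|_\infty+\|\phi\|_\infty\le C(d,m,\mu,\rho)$. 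Once this bound is in hand, the previous paragraph's estimate on $\|H_\e\|_{L^2}$ follows with a constant of the form advertised, and the proof is complete. The diameter assumption $1\le\operatorname{diam}(\Omega)\le 10$ enters only to normalize constants (e.g. in the Poincaré inequality underlying the construction of $\phi$).
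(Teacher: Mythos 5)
Your proof is correct and follows essentially the same route as the paper: derive the two-scale expansion identity $\mathcal{L}_\e w_\e=\operatorname{div}(H_\e)$ (which the paper simply quotes as (\ref{L-w}) from the references), invoke boundedness of $\chi$ and $\phi$ (automatic for $m=1$ or $d=2$, via the VMO hypothesis otherwise), and conclude by the energy estimate together with the support properties of $1-\eta_\e$ and $\nabla\eta_\e$. The only difference is that you supply the algebra behind (\ref{L-w}), including the cancellation of the top-order term via the skew-symmetry of the flux correctors, which the paper delegates to \cite{KLS-2012, Shen-book}.
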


\begin{proof}
Note that $w_\e=0$ on $\partial\Omega$.
Let $\phi=\big( \phi_{kij}^{\alpha\beta}\big)$ be given by (\ref{flux}).
A direct computation shows that 
\begin{equation}\label{L-w}
\mathcal{L}_\e (w_\e)
= \text{\rm div}\Big\{  (A^\e-\widehat{A}) (1-\eta_\e)( \nabla u_0 ) \Big\}
+ \e \text{\rm div}
\Big\{ \big(\phi^\e + \chi^\e A^\e \big) 
\nabla \big(\eta_\e (\nabla u_0) \big) \Big\},
\end{equation}
where $A^\e=A(x/\e)$, $\chi^\e=\chi(x/\e)$ and $\phi^\e =\phi(x/\e)$. 
See e.g. \cite{KLS-2012, Shen-book}.
In the  case $m=1$ or $d=2$,
 the correctors $\chi$ and $\phi$ are bounded.
They are also bounded if $m\ge 2$ and  $d\ge 3$ under the assumption  $A\in \text{VMO}(\mathbb{R}^d)$.
As a result, (\ref{app-1-0}) follows from (\ref{L-w}) by the energy estimate.
\end{proof}

\begin{lemma}\label{app-lemma-3}
Let $A$ and $\Omega$ be the same as in Lemma \ref{app-lemma-1}.
Then there exists $\kappa\in (0, 1)$, depending only on $d$, $m$, $\mu$ and the Lipschitz character of $\Omega$, such that
\begin{equation}\label{app-3-0}
\| \nabla u_\e 
-\nabla u_0
-( \nabla \chi)^\e   \eta_\e (\nabla u_0)
\|_{L^2(\Omega)}
\le C \e^{\kappa}
\|g\|_{H^1(\partial\Omega)},
\end{equation}
where 
$(\nabla \chi)^\e =\nabla \chi(x/\e)$ and
$C$ depends only on $d$, $\mu$, $\mu$, the function $\rho$ in (\ref{VMO}) (if $m\ge 2$ and $d\ge 3$), and the Lipschitz character of $\Omega$.
\end{lemma}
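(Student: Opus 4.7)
The plan is to combine Lemma \ref{app-lemma-1} with a reverse H\"older regularity estimate for the homogenized solution $u_0$ in the Lipschitz domain $\Omega$. A direct computation yields the algebraic identity
\begin{equation*}
\nabla u_\e - \nabla u_0 - (\nabla\chi)^\e \eta_\e\nabla u_0 = \nabla w_\e + \e\chi^\e\nabla(\eta_\e\nabla u_0).
\end{equation*}
Expanding $\nabla(\eta_\e\nabla u_0) = (\nabla\eta_\e)\otimes\nabla u_0+\eta_\e\nabla^2 u_0$, using that $\nabla\eta_\e$ is supported in $\Sigma_{5\e}\setminus\Sigma_{4\e}$ with $\e|\nabla\eta_\e|\le C$, and using the boundedness of $\chi$ (which holds under the hypotheses of Lemma \ref{app-lemma-1}), the second term on the right is bounded in $L^2(\Omega)$ by $C\|\nabla u_0\|_{L^2(\Sigma_{5\e})}+C\e\|\nabla^2 u_0\|_{L^2(\Omega\setminus\Sigma_{4\e})}$. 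Combined with Lemma \ref{app-lemma-1}, the problem reduces to proving
\begin{equation*}
\|\nabla u_0\|_{L^2(\Sigma_t)} + t\|\nabla^2 u_0\|_{L^2(\Omega\setminus\Sigma_t)} \le Ct^\kappa\|g\|_{H^1(\partial\Omega)}
\end{equation*}
for $0<t<1$ and some $\kappa\in(0,1/2]$.

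To obtain this I would invoke a Meyers-type reverse H\"older estimate for the constant-coefficient system $\mathrm{div}(\widehat{A}\nabla u_0)=0$ in the Lipschitz domain, yielding an exponent $p>2$, depending only on $d$, $m$, $\mu$ and the Lipschitz character of $\Omega$, such that $\|\nabla u_0\|_{L^p(\Omega)}\le C\|g\|_{H^1(\partial\Omega)}$; this is in the spirit of the $W^{1,p}$ estimate in Lipschitz domains from \cite{Shen-2005}. Since $|\Sigma_t|\le Ct$, H\"older's inequality immediately gives $\|\nabla u_0\|_{L^2(\Sigma_t)}\le Ct^{(p-2)/(2p)}\|g\|_{H^1(\partial\Omega)}$, settling the first term with $\kappa=(p-2)/(2p)$. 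For the second term I would decompose $\Omega\setminus\Sigma_t$ into a Whitney family $\{Q\}$ with $\ell(Q)\approx\mathrm{dist}(Q,\partial\Omega)\ge ct$, apply interior Caccioppoli for the constant-coefficient system to get $\|\nabla^2 u_0\|_{L^2(Q)}\le C\ell(Q)^{-1}\|\nabla u_0\|_{L^2(2Q)}$, upgrade this to $L^p$ via H\"older, and sum scale by scale $\ell(Q)\in[2^jt,2^{j+1}t)$, using that each layer of thickness $2^jt$ contains roughly $(2^jt)^{1-d}$ such cubes and the power-mean inequality on the $\ell^{2/p}$ quasinorm; the resulting geometric series produces $t\|\nabla^2 u_0\|_{L^2(\Omega\setminus\Sigma_t)}\le Ct^{(p-2)/(2p)}\|g\|_{H^1(\partial\Omega)}$, with the same $\kappa$.

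The hard part is the Meyers-type $W^{1,p}$ bound for $u_0$ in the Lipschitz domain with $H^1$ boundary data. Fortunately, since $\widehat{A}$ is constant one has boundary reverse H\"older inequalities on balls centred on $\partial\Omega$, and the standard self-improvement argument upgrades the $L^2$ energy estimate to $L^p$ for some $p>2$ that depends only on the Lipschitz character; no smoothness or symmetry hypothesis is needed at this step. A secondary difficulty is the Whitney summation for the $\nabla^2 u_0$ piece, where one must track exactly how the number of cubes per dyadic layer, their size, and the $\ell(Q)^{-2}$ interior blow-up combine, so as to extract a positive power of $t$ rather than an unbounded constant; the saving feature is precisely the floor $\ell(Q)\ge ct$ on the Whitney scales.
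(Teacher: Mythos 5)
Your reduction is the same as the paper's: you peel off the term $\e\chi^\e\nabla(\eta_\e\nabla u_0)$, absorb it into the right-hand side of Lemma \ref{app-lemma-1} using the boundedness of $\chi$ and $\e|\nabla\eta_\e|\le C$, and reduce everything to the estimate $\|\nabla u_0\|_{L^2(\Sigma_t)}+t\|\nabla^2 u_0\|_{L^2(\Omega\setminus\Sigma_t)}\le Ct^\kappa\|g\|_{H^1(\partial\Omega)}$. Where you diverge is in how you prove that estimate. The paper does it with a weighted $L^2$ bound: it extends $g$ to $G$ with $\int_\Omega|\nabla G|^2\,\delta(x)^{-2\kappa}\,dx\le C\|g\|^2_{H^1(\partial\Omega)}$ (where $\delta(x)=\mathrm{dist}(x,\partial\Omega)$), applies Theorem \ref{g-thm} --- an elementary Hardy-inequality argument valid for any bounded measurable coefficients --- to get $\int_\Omega|\nabla u_0|^2\delta^{-2\kappa}\le C\|g\|^2_{H^1(\partial\Omega)}$, and then reads off the thin-layer bound directly and the Hessian bound from pointwise interior estimates $|\nabla^2 u_0|\lesssim\delta^{-1}$ times local averages of $|\nabla u_0|$. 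Your route instead goes through a global $W^{1,p}$ bound with $p>2$, H\"older on $\Sigma_t$, and a Whitney/dyadic-layer summation; I checked the bookkeeping in that summation and it does produce $\kappa=(p-2)/(2p)$ as you claim. Both routes give a $\kappa$ depending only on $d$, $m$, $\mu$ and the Lipschitz character. The weighted route is somewhat more economical because Theorem \ref{g-thm} is self-contained and is needed elsewhere in the paper anyway, whereas yours requires the Meyers machinery plus a trace-space embedding.

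That last point is the one place where your argument, as written, has a real gap: the claim $\|\nabla u_0\|_{L^p(\Omega)}\le C\|g\|_{H^1(\partial\Omega)}$ does not follow from ``boundary reverse H\"older inequalities'' alone, because $u_0$ has \emph{nonzero} boundary data; a reverse H\"older inequality on a boundary ball holds for solutions vanishing on the boundary, not for $u_0$ itself. To make this step work you must first extend $g$ to some $G\in W^{1,p}(\Omega)$ with $\|\nabla G\|_{L^p(\Omega)}\le C\|g\|_{H^1(\partial\Omega)}$ --- which uses the embedding $H^1(\partial\Omega)\hookrightarrow B^{1-1/p}_{p,p}(\partial\Omega)$, valid only for $2<p<2d/(d-1)$ --- and then apply the Meyers $W^{1,p}$ estimate (for $p$ close to $2$, hence compatible with this restriction) to $v=u_0-G\in H^1_0(\Omega)$ solving $-\mathrm{div}(\widehat A\nabla v)=\mathrm{div}(\widehat A\nabla G)$. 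With that insertion your proof closes; without it the starting $L^p$ bound is unjustified. Note that the paper faces the analogous extension issue (constructing $G$ with the weighted gradient bound) but only needs it in $L^2$ with a dist-weight, which is a softer requirement.
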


\begin{proof}
Note that
$$
\nabla w_\e=\nabla u_\e-\nabla u_0
-(\nabla \chi)^\e \eta_\e (\nabla u_0)
-\e (\chi)^\e \nabla ( \eta_\e (\nabla u_0)).
$$
In view of Lemma \ref{app-lemma-1}, it suffices to prove that 
\begin{equation}\label{app-3-00}
\| \nabla u_0\|_{L^2(\Sigma_{5\e})}
+\e \|\nabla^2 u_0\|_{L^2(\Omega\setminus \Sigma_{4\e})}
\le C \e^\kappa \| g\|_{H^1(\partial\Omega)}.
\end{equation}
To this end, we choose  a function $G\in H^1(\Omega) $ such  that $G=g$ on $\partial\Omega$ and
$$
\int_\Omega |\nabla G|^2 [\text{dist}(x, \partial\Omega)]^{-2\kappa} \, dx \le C\| g\|_{H^1(\partial\Omega)}^2,
$$
where $\kappa\in (0, 1/2)$ is given by Theorem \ref{g-thm}.
Since $-\text{\rm div}(\widehat{A}\nabla (u_0-G))=\text{\rm div}(A\nabla G)$ in $\Omega$,
it follows from Theorem  \ref{g-thm} that
\begin{equation}\label{app-3-1}
\int_\Omega |\nabla u_0|^2 [\text{dist}(x, \partial\Omega)]^{-2\kappa} \, dx \le C\| g\|_{H^1(\partial\Omega)}^2.
\end{equation}
This implies that
$$
\|\nabla u_0\|_{L^2(\Sigma_{5\e})}
\le C \e^{\kappa} \| g\|_{H^1(\partial\Omega)}.
$$
Also, by the interior estimates for the elliptic systems with constant coefficients,
$$
\aligned
\int_{\Omega\setminus \Sigma_{4\e}} |\nabla^2 u_0|^2\, dx
 & \le C\int_{\Omega\setminus \Sigma_{3\e}}
\frac{|\nabla u_0 (x)|^2}
{ [ \text{\rm dist} (x, \partial\Omega)]^2}\, dx \\
&\le C \e^{2\kappa -2}
\| g\|_{H^1(\partial\Omega)}^2,
\endaligned
$$
where we have used (\ref{app-3-1}) for the last inequality.
This, together with (\ref{app-3-1}) and (\ref{app-1-0}), gives (\ref{app-3-00}).
\end{proof}

\begin{lemma}\label{app-lemma-5}
Let $A$ be a matrix satisfying the same conditions as in Lemma \ref{app-lemma-1}.
Let $u_\e\in H^1(B_{2r}) $ be a weak solution of $\text{\rm div}(A(x/\e)\nabla u_\e)=0$ in $B_{2r}$, where $r\ge 100\e$.
Then there exists $u_0\in H^1(B_{3r/2})$ such that
$\text{\rm div} (\widehat{A}\nabla u_0)=0$ in $B_{3r/2}$ such that
\begin{align}
\left(\fint_{B_{3r/2}} |\nabla u_0|^2\right)^{1/2}
 & \le C \left(\fint_{B_{2r}} |\nabla u_\e|^2 \right)^{1/2}, \label{app-5-0}\\
\left(\fint_{B_r}
|\nabla u_\e -\nabla u_0
-( \nabla \chi)^\e  (\nabla u_0)  |^2 \right)^{1/2}
 & \le C \left( \frac{\e}{r} \right)^{\kappa}
\left(\fint_{B_{2r}}
|\nabla u_\e|^2 \right)^{1/2},\label{app-5-1}
\end{align}
where $C$ depends only on $d$, $m$, $\mu$, and the function $\rho$ in (\ref{VMO}) (if $m\ge 2$ and $d\ge 3$).
\end{lemma}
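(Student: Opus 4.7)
The plan is to reduce to unit scale by rescaling, then choose a good radius on which the boundary trace of $u_\e$ is controlled, solve a Dirichlet problem for the homogenized operator on the ball of that radius, and finally invoke Lemma \ref{app-lemma-3} together with the properties of the cutoff $\eta_\e$.

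\textbf{Step 1 (rescaling).} Set $v(x) = r^{-1} u_\e(rx)$ on $B_2$; then $v$ solves $\text{\rm div}(A(x/\e')\nabla v) = 0$ on $B_2$ with small parameter $\e' = \e/r \le 1/100$. Both inequalities (\ref{app-5-0}) and (\ref{app-5-1}) are invariant under this rescaling (with $u_0(y) := v_0(y/r)$ on $B_{3r/2}$), so it suffices to prove the statement in the normalized case $r = 1$, $\e \le 1/100$, on $B_2$. Without loss of generality also subtract the constant $\fint_{B_2} u_\e$, so $\|u_\e\|_{L^2(B_2)} \le C \|\nabla u_\e\|_{L^2(B_2)}$ by Poincar\'e.

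\textbf{Step 2 (choice of good radius).} By Fubini on spheres,
\[
\int_{3/2}^{7/4} \Big( \|u_\e\|_{L^2(\partial B_\rho)}^2 + \|\nabla u_\e\|_{L^2(\partial B_\rho)}^2 \Big)\, d\rho
\le \int_{B_{7/4}\setminus B_{3/2}} \big( |u_\e|^2 + |\nabla u_\e|^2 \big)\, dx
\le C \|\nabla u_\e\|_{L^2(B_2)}^2 .
\]
Hence some $\rho \in (3/2, 7/4)$ satisfies $\|u_\e\|_{H^1(\partial B_\rho)}^2 \le C \|\nabla u_\e\|_{L^2(B_2)}^2$ (the tangential gradient on $\partial B_\rho$ is dominated by the full gradient there).

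\textbf{Step 3 (homogenized replacement).} Let $u_0 \in H^1(B_\rho)$ be the unique solution of $\text{\rm div}(\widehat{A}\nabla u_0) = 0$ in $B_\rho$ with $u_0 - u_\e \in H_0^1(B_\rho)$. Testing the equation against $u_0 - u_\e$ and using ellipticity of $\widehat{A}$ gives $\|\nabla u_0\|_{L^2(B_\rho)} \le C \|\nabla u_\e\|_{L^2(B_\rho)}$, which is exactly (\ref{app-5-0}) (up to trivial inclusion $B_{3/2} \subset B_\rho \subset B_2$).

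\textbf{Step 4 (apply Lemma \ref{app-lemma-3}).} The ball $B_\rho$ is a smooth Lipschitz domain with $\text{\rm diam}(B_\rho) \in (3, 7/2) \subset [1,10]$, so Lemma \ref{app-lemma-3}, applied with $\Omega = B_\rho$ and boundary data $g = u_\e|_{\partial B_\rho}$, yields
\[
\big\| \nabla u_\e - \nabla u_0 - (\nabla\chi)^\e\, \eta_\e (\nabla u_0) \big\|_{L^2(B_\rho)}
\le C \e^\kappa \|u_\e\|_{H^1(\partial B_\rho)}
\le C \e^\kappa \|\nabla u_\e\|_{L^2(B_2)},
\]
by Step 2. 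Here $\eta_\e$ is the cutoff supported in $\{x\in B_\rho : \text{\rm dist}(x,\partial B_\rho) \ge 4\e\}$ with $\eta_\e = 1$ where this distance is $\ge 5\e$.

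\textbf{Step 5 (disposing of $\eta_\e$ on $B_1$).} Since $\rho \ge 3/2$ and $\e \le 1/100$, one has $\rho - 5\e \ge 3/2 - 1/20 > 1$, so $B_1 \subset \{x\in B_\rho : \text{\rm dist}(x,\partial B_\rho) \ge 5\e\}$; thus $\eta_\e \equiv 1$ on $B_1$. Restricting the inequality in Step 4 to $B_1$ and averaging gives (\ref{app-5-1}) in the normalized form; rescaling back via Step 1 gives the desired bounds on $B_r$ and $B_{3r/2}$.

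\textbf{Main obstacle.} The only subtle point is justifying the application of Lemma \ref{app-lemma-3} on $B_\rho$: one must produce a boundary value $g$ that lies in $H^1(\partial B_\rho)$ with the correct quantitative bound. This is handled by the pigeonhole (Fubini) selection of $\rho$ in Step 2, but it is essential that $B_\rho$ be smooth so that the tangential gradient on $\partial B_\rho$ is truly controlled by the gradient of $u_\e$ in an annulus of fixed (rescaled) thickness; everything else is bookkeeping and scaling.
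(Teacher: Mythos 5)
Your proposal is correct and follows essentially the same route as the paper: rescale to $r=1$, use a Fubini/pigeonhole selection of a good radius in $(3/2,2)$ to control the boundary $H^1$ norm of $u_\e$ (the paper subtracts the spherical average $k=\fint_{\partial B_t}u_\e$ where you subtract the solid average over $B_2$ — an immaterial difference), solve the homogenized Dirichlet problem with that data to get (\ref{app-5-0}) by the energy estimate, and apply Lemma \ref{app-lemma-3} on the good ball, noting $\eta_\e\equiv 1$ on $B_1$ to obtain (\ref{app-5-1}).
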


\begin{proof}
By rescaling we may assume $r=1$. There exists $t\in (3/2,2)$ such that
$$
\int_{\partial B_t}
|\nabla u_\e|^2 \, d\sigma 
\le C \int_{B_2} |\nabla u_\e|^2\, dx.
$$
This follows readily by using the polar coordinates.
Let $u_0\in H^1(B_t)$ be the solution of
$\text{\rm div}(\widehat{A}\nabla u_0)=0$ in $B_t$
with $u_0=u_\e -k $ on $\partial B_t$, where $k=\fint_{\partial B_t} u_\e$.
By the energy estimate,
$$
\int_{B_t} |\nabla u_0|^2\, dx \le C \int_{B_t} |\nabla u_\e|^2\, dx,
$$
which yields  (\ref{app-5-0}) with  $r=1$.
To see (\ref{app-5-1}), we apply Lemma \ref{app-lemma-3} with $\Omega=B_t$.
Since $\eta_\e=1$ in $B_1$ and $\| u_\e -k\|_{H^1(\partial B_t)}
\le C \|\nabla u_\e\|_{L^2(\partial B_t)}$, we obtain (\ref{app-5-1}).
\end{proof}

\begin{lemma}\label{app-lemma-6}
Assume $A$ satisfies the same conditions as in Lemma \ref{app-lemma-1}.
Let $\Omega$ be a bounded Lipschitz domain.
Let $u_\e\in H^1(B_{4r}\cap \Omega)$ be a weak solution of $\text{\rm div}(A(x/\e)\nabla u_\e)=0$ in $B_{4r}\cap \Omega$ with
$u_\e =0$ on $B_{4r}\cap \Omega$, where $B_r=B(x_0, r)$ and  $x_0 \in \partial\Omega$.
Assume $ 100 \e< r< cr_0$, where $r_0=\text{\rm diam}(\Omega)$.
Then there exists $u_0\in H^1 (B_{3r/2} \cap \Omega)$ such that
$\text{\rm div}(\widehat{A}\nabla u_0)=0$ in $B_{3r/2} \cap \Omega$,
$u_0=0$ on $B_{3r/2}\cap \partial\Omega$, and
\begin{align}
\left(\fint_{B_{3r/2} \cap \Omega}  |\nabla u_0|^2\right)^{1/2}
 & \le C \left(\fint_{B_{2r}\cap \Omega } |\nabla u_\e|^2 \right)^{1/2}, \label{app-6-0}\\
\left(\fint_{B_r\cap \Omega }
|\nabla u_\e -\nabla u_0
- ( \nabla \chi)^\e (\nabla u_0)\eta_\e   |^2 \right)^{1/2}
 & \le C \left( \frac{\e}{r} \right)^{\kappa}
\left(\fint_{B_{2r}\cap \Omega }
|\nabla u_\e|^2 \right)^{1/2},\label{app-6-1}
\end{align}
where $C$ depends only on $d$, $m$, $\mu$, the function $\rho$ in (\ref{VMO}) (if $m\ge 2$ and $d\ge 3$)
and the Lipschitz character of $\Omega$.
\end{lemma}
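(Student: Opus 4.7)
My plan is to mimic the interior construction of Lemma \ref{app-lemma-5}, exploiting the homogeneous Dirichlet condition $u_\e=0$ on $B_{4r}\cap\partial\Omega$ and reducing \eqref{app-6-1} to Lemma \ref{app-lemma-3} applied on an appropriate Lipschitz sub-domain.

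By the rescaling $x\mapsto x/r$ I may assume $r=1$ and $\e<1/100$. A Fubini argument in polar coordinates about $x_0$ produces some $t\in(3/2,2)$ with
\[
\int_{\partial B_t\cap\Omega}\bigl(|u_\e|^2+|\nabla u_\e|^2\bigr)\,d\sigma \le C\int_{B_2\cap\Omega}\bigl(|u_\e|^2+|\nabla u_\e|^2\bigr)\,dx.
\]
Put $\Omega':=B_t\cap\Omega$; this is a Lipschitz domain whose Lipschitz character and diameter are uniformly controlled in terms of those of $\Omega$. I would then take $u_0\in H^1(\Omega')$ to be the solution of $\text{\rm div}(\widehat{A}\nabla u_0)=0$ in $\Omega'$ with $u_0-u_\e\in H^1_0(\Omega')$; since $u_\e=0$ on $B_t\cap\partial\Omega$, the boundary condition $u_0=0$ on $B_{3r/2}\cap\partial\Omega$ of the statement is automatic. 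Testing $\text{\rm div}(\widehat{A}\nabla(u_0-u_\e))=-\text{\rm div}(\widehat{A}\nabla u_\e)$ against $u_0-u_\e$ gives $\|\nabla u_0\|_{L^2(\Omega')}\le C\|\nabla u_\e\|_{L^2(\Omega')}$, which yields \eqref{app-6-0}.

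For \eqref{app-6-1}, my plan is to apply Lemma \ref{app-lemma-3} on $\Omega'$ with boundary data $g:=u_\e|_{\partial\Omega'}$. Since $g=0$ on $B_t\cap\partial\Omega$ and has tangential derivative bounded by $|\nabla u_\e|$ on $\partial B_t\cap\Omega$, combining the radius selection with the Poincar\'e inequality on $B_2\cap\Omega$ (valid because $u_\e=0$ on $B_4\cap\partial\Omega$) yields
\[
\|g\|_{H^1(\partial\Omega')}\le C\|\nabla u_\e\|_{L^2(B_2\cap\Omega)}.
\]
Lemma \ref{app-lemma-3}, applied to $\Omega'$ after a harmless dilation to normalize its diameter, then produces
\[
\bigl\|\nabla u_\e-\nabla u_0-(\nabla\chi)^\e\eta_\e'(\nabla u_0)\bigr\|_{L^2(\Omega')}\le C\e^\kappa\|\nabla u_\e\|_{L^2(B_2\cap\Omega)},
\]
where $\eta_\e'$ denotes the cut-off attached to $\Omega'$ in the statement of Lemma \ref{app-lemma-3}.

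The final observation is that on $B_1\cap\Omega$ one has $\text{\rm dist}(\cdot,\partial B_t)\ge 1/2\gg 5\e$, so $\eta_\e'$ there depends only on distance to $\partial\Omega$ and may be chosen to coincide with the original cut-off $\eta_\e$ of $\Omega$. Restricting the above estimate to $B_1\cap\Omega$, passing to averaged integrals, and undoing the rescaling then yields \eqref{app-6-1}. The main obstacle, in my view, is not any single deep estimate but the careful bookkeeping: controlling the boundary trace $\|g\|_{H^1(\partial\Omega')}$ through the Fubini selection and Poincar\'e inequality, ensuring the Lipschitz character of $\Omega'$ remains uniform in $t\in(3/2,2)$, and verifying the identification of the two cut-offs on $B_1\cap\Omega$.
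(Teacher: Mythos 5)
Your proposal is correct and follows essentially the same route as the paper, which simply notes that the argument of Lemma \ref{app-lemma-5} carries over with $B(x_0,t)\cap\Omega$ in place of $B_t$ and omits the details. You have supplied exactly those details (the Fubini selection of $t$, the Poincar\'e inequality replacing the subtraction of the mean, and the identification of the cut-offs on $B_1\cap\Omega$), all of which are the intended bookkeeping.
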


\begin{proof}
The proof is similar to that of Lemma \ref{app-lemma-5}.
By rescaling we may assume $r=1$.
In the place of $B_t$ we use $B(x_0, t)\cap \Omega$.
We omit the details.
\end{proof}

\begin{lemma}\label{lemma-H-L}
Assume  $A$ satisfies conditions (\ref{ellipticity})-(\ref{periodicity}) and $A\in \text{\rm VMO}(\mathbb{R}^d)$.
Let $\omega$ be an $A_1$ weight and $\Omega$ a bounded Lipschitz domain with $\text{\rm diam}(\Omega)=1$.
Suppose that Condition (2) in Theorem \ref{main-thm-1}  holds 
in  $\Omega$ with
weight $\omega$ for the homogenized operator $\mathcal{L}_0$.
Let $B_0=B(x_0, r_0)$ with the properties that  either $ 4B_0\subset \Omega$ or $x_0\in \partial\Omega$ and $0<r_0<c_0$.
Then 
\begin{equation}\label{L-e-1}
\int_{B_0\cap \Omega}
\Big\{
\mathcal{M}_{4B_0}^{\e} (|\nabla u_\e| \chi_{4B_0\cap \Omega} ) \Big\}^2
\omega \, dx
\le C \int_{4B_0\cap \Omega} |\nabla u_0|^2 \fint_{B_0} \omega,
\end{equation}
where
$\text{\rm div}(A(x/\e)\nabla u_\e)=0$ in $4B_0\cap \Omega$ and
$u_\e=0$ on $4B_0\cap \Omega$ (if $x_0\in \partial\Omega$).
\end{lemma}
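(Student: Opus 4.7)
The plan is to invoke the boundary real-variable method (Theorem~\ref{real-thm-b} and Remark~\ref{remark-L-b}) with $F = |\nabla u_\varepsilon|$, $f \equiv 0$, and $p = 2$. The exponent $p_0 \in (1, 2)$ is supplied by Meyers' reverse Hölder estimate for $|\nabla u_\varepsilon|$, which is uniform in $\varepsilon$ thanks to the periodic structure of $A$, while $p_1 > 2$ is furnished by Lemma~\ref{lemma-m-1} applied to the constant matrix $\widehat{A}$, whose Condition~(2) is assumed. For each admissible ball $B = B(y_0, r)$ with $r \ge \varepsilon$, I would construct $F_B$ and $R_B$ on $2B \cap \Omega$ in two scale regimes separated by a threshold $K\varepsilon$, where $K \ge 100$ is a large constant to be chosen.

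In the large-scale regime $r \ge K\varepsilon$, I would apply Lemma~\ref{app-lemma-5} (interior, when $4B \subset 2B_0 \cap \Omega$) or Lemma~\ref{app-lemma-6} (boundary, when $y_0 \in 2B_0 \cap \partial\Omega$) to produce a solution $v$ of $\text{\rm div}(\widehat{A}\nabla v) = 0$ on a slightly shrunken ball (with $v = 0$ on $\partial\Omega$ in the boundary case) satisfying
\begin{equation*}
\Bigl(\fint_{B \cap \Omega} \bigl|\nabla u_\varepsilon - \nabla v - (\nabla\chi)^\varepsilon \eta_\varepsilon \nabla v\bigr|^2\Bigr)^{1/2} \le C\, (\varepsilon/r)^{\kappa} \Bigl(\fint_{2B \cap \Omega} |\nabla u_\varepsilon|^2\Bigr)^{1/2}.
\end{equation*}
Set $F_B$ equal to the error integrand and $R_B = |(I + (\nabla\chi)^\varepsilon \eta_\varepsilon)\nabla v|$, so that $|\nabla u_\varepsilon| \le F_B + R_B$. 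Condition~(\ref{real-1b}) then holds with $\eta = C K^{-\kappa}$, which can be forced below the threshold $\eta_0$ by taking $K$ large. For~(\ref{real-2b}), Lemma~\ref{lemma-m-1} applied to $\widehat{A}$ provides the weighted reverse Hölder inequality for $(\nabla v)\omega^{1/2}$ at some exponent $p_1 > 2$; since the VMO assumption on $A$ guarantees $|\nabla \chi| \in L^q_{\loc}$ uniformly for every $q < \infty$, Hölder's inequality lets one absorb the corrector factor $I + (\nabla\chi)^\varepsilon\eta_\varepsilon$ into that estimate at the cost of slightly shrinking $p_1$.

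In the small-scale regime $\varepsilon \le r < K\varepsilon$, the homogenization approximation does not yield a small parameter, so I would simply take $F_B \equiv 0$ and $R_B = F = |\nabla u_\varepsilon|$. Condition~(\ref{real-1b}) is then automatic. For~(\ref{real-2b}), Meyers' interior and boundary reverse Hölder (uniform in $\varepsilon$) gives a self-improvement of $|\nabla u_\varepsilon|$ to $L^q$ for some $q > p_1$, and combining with the Gehring-type reverse Hölder property~(\ref{RH}) of the $A_1$ weight $\omega$ via Hölder's inequality produces the required bound, with $N_2$ depending on the (now fixed) constant $K$. Having verified the hypotheses of Theorem~\ref{real-thm-b} on all balls with $r \ge \varepsilon$, Remark~\ref{remark-L-b} with $p = 2$ yields
\begin{equation*}
\int_{B_0 \cap \Omega} \bigl\{\mathcal{M}^{\varepsilon}_{4B_0}(|\nabla u_\varepsilon|\chi_{4B_0 \cap \Omega})\bigr\}^2 \omega \, dx \le C \Bigl(\fint_{4B_0 \cap \Omega} |\nabla u_\varepsilon|^{p_0}\Bigr)^{2/p_0} \omega(B_0),
\end{equation*}
and one final application of Meyers to pass from $L^{p_0}$ to $L^2$ gives the stated inequality (interpreting $u_0$ on the right-hand side as $u_\varepsilon$, which appears to be the intended statement since no $u_0$ is defined in the hypothesis).

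The main obstacle is the coherent treatment of the intermediate scales $r \in [\varepsilon, K\varepsilon)$, where neither the two-scale expansion nor the weighted reverse Hölder for $\widehat{A}$ is directly usable; the bridging argument must lean on Meyers' scale-invariant self-improvement combined with the self-improvement of $A_1$ weights, and care is needed in choosing $p_1 > 2$ close enough to $2$ that a single exponent serves in both regimes.
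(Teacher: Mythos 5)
Your large-scale regime ($r\ge K\e$) is essentially the paper's argument: the same decomposition, with $F_B$ the two-scale error and $R_B=|\nabla v+(\nabla\chi)^\e\eta_\e\nabla v|$ built from the approximating $\widehat A$-solution of Lemmas \ref{app-lemma-5} and \ref{app-lemma-6}, the small parameter $CK^{-\kappa}$ from the convergence rate, and the absorption of the corrector factor into the weighted reverse H\"older inequality of Lemma \ref{lemma-m-1} by taking $p_1$ strictly between $2$ and the exponent furnished there. The treatment of the scales $\e\le r<K\e$, however, contains a genuine gap. Taking $R_B=|\nabla u_\e|$ there, condition (\ref{real-2b}) demands
$\bigl(\fint_{2B\cap\Omega}|\nabla u_\e|^{p_1}\omega\bigr)^{1/p_1}\le N_2\bigl(\fint_{8B\cap\Omega}|\nabla u_\e|^{p_0}\bigr)^{1/p_0}\bigl(\fint_{B}\omega\bigr)^{1/p_1}$,
which is precisely a weighted reverse H\"older inequality for $\nabla u_\e$ against the given $A_1$ weight --- the kind of estimate the lemma is designed to produce, not one you may assume. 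Your derivation via H\"older and (\ref{RH}) requires $|\nabla u_\e|\in L^{p_1 s'}$ locally with $s'=(1+\sigma)/\sigma$, where $\sigma$ is the Gehring exponent of $\omega$; since $\sigma$ depends on the $A_1$ constant of $\omega$ and may be small, $p_1 s'$ can far exceed the Meyers exponent, which for boundary balls in a Lipschitz domain is bounded in terms of the Lipschitz character and cannot be improved. Having ``$q>p_1$'' is not enough. This is exactly why the unconditional argument of Remark \ref{remark-C-1} is restricted to $C^1$ domains.

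The paper sidesteps the intermediate scales entirely: it verifies (\ref{real-1b})--(\ref{real-2b}) only for balls of radius $r\ge L\e$, applies Remark \ref{remark-L-b} at truncation level $L\e$ to bound the $\mathcal{M}^{L\e}_{4B_0}$-maximal function, and then recovers the statement at level $\e$ from the elementary pointwise comparison $\mathcal{M}^{\e}_{4B_0}\le L^d\,\mathcal{M}^{L\e}_{4B_0}$. Replacing your small-scale step by this observation repairs the argument; the remainder of your proof, including reading the $u_0$ on the right-hand side of (\ref{L-e-1}) as $u_\e$, matches the paper.
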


\begin{proof}

We may also assume $0<\e< c_0$ and $c_0$ is small.
The case $\e\ge c_0$  is trivial.
We consider the case $x_0\in \partial\Omega$ (the interior case $4B_0\subset \Omega$ is similar).
We apply Theorem \ref{real-thm-b} with $F=|\nabla u_\e|$ and $f=0$.
Fix a  large  constant $L>1$.
Let $B=B(y_0, r)$ be a ball with the properties that $r\ge L\e$,
$|B|\le c_1 |B_0|$
and that  either $y_0\in 2B_0\cap\partial  \Omega$ or $4B\subset 2B_0\cap \Omega$.
Again, we only consider the case $y_0\in  2B_0\cap \partial \Omega$.
Note   that $\text{\rm div}(A(x/\e)\nabla u_\e) =0$ in $8B\cap \Omega$ and
$u_\e=0$ on $8B\cap \partial\Omega$.
Let $u_0$ be the solution of $\text{\rm div}(\widehat{A}\nabla u_0)=0$ in $ 4B \cap \Omega$,
constructed in Lemma \ref{app-lemma-6}.
Define
$$
F_B=|\nabla u_\e -\nabla u_0 -( \nabla \chi)^\e (\eta_\e (\nabla u_0) )  |
\quad \text{ and } \quad
R_B= |\nabla u_0 +(\nabla \chi)^\e  (\eta_\e (\nabla u_0)  ) |.
$$
It follows from (\ref{app-6-1}) that
\begin{equation}\label{app-10-2}
\left(\fint_{2B\cap\Omega }
|F_B|^2\right)^{1/2}
\le C \left(\frac{\e}{r}\right)^\kappa
\left(\fint_{6B\cap \Omega} |F|^2\right)^{1/2}.
\end{equation}
By using H\"older's inequality for $F_B$ and the reverse H\"older estimate for $\nabla u_\e$, we obtain 
\begin{equation}\label{app-10-3}
\left(\fint_{2B\cap\Omega }
|F_B|^{p_0}\right)^{1/p_0}
\le C \left(\frac{\e}{r}\right)^\kappa
\left(\fint_{8B\cap \Omega} |F|^{p_0} \right)^{1/p_0},
\end{equation}
where $1< p_0<2$ depends only on $d$, $\mu$ and the Lipschitz character of $\Omega$.

To verify the condition for $R_B$, we note that the VMO condition on $A$ implies that
$|\nabla \chi|\in L^q(Y)$ for any $q>2$.
Since  $r\ge \e$, it follows that 
$$
\left(\fint_{2B\cap \Omega} 
|( \nabla \chi )^\e|^q \right)^{1/q}
\le C \left(\int_Y |\nabla \chi|^q\right)^{1/q}<\infty
$$
for any $2<q<\infty$.
Hence, by H\"older's inequality,  for $2<p_1<p_2$,
$$
\aligned
\left(\fint_{2B\cap \Omega}
|R_B \omega^{1/2} |^{p_1} \right)^{1/p_1}
 & \le C \left(\fint_{2B\cap \Omega} 
|( \nabla u_0) \omega^{1/2}|^{p_1}  (1+| (\nabla \chi)^\e|)^{p_1} \right)^{1/p_1}\\
& \le C \left(\fint_{2B\cap \Omega} 
|( \nabla u_0) \omega^{1/2}|^{p_2}  \right)^{1/p_2}\\
&\le C \fint_{4B\cap \Omega}
|(\nabla u_0)\omega^{1/2}|
\endaligned
$$
where $p_2>2$ is the exponent $p$ in (\ref{eqv-1}) for the homogenized operator $\mathcal{L}_0$.
It follows that
$$
\aligned
\left(\fint_{2B\cap \Omega}
|R_B \omega^{1/2} |^{p_1} \right)^{1/p_1}
&\le C \left(\fint_{4B\cap \Omega}
|\nabla  u_0 |^2 \right)^{1/2} \left(\fint_B \omega \right)^{1/p_1}\\
&\le
C \left(\fint_{6B\cap \Omega}
|F|^2 \right)^{1/2} \left(\fint_B \omega \right)^{1/p_1}\\
&\le
C \left(\fint_{8B\cap \Omega}
|F|^{p_0}  \right)^{1/p_0} \left(\fint_B \omega \right)^{1/p_1}.
\endaligned
$$
This gives the condition (\ref{real-2b}) in Theorem \ref{real-thm-b}.
We now choose $L>1$ so large that  $CL^{-\kappa} \le \eta_0$, where $\eta_0>0$ is given  in Theorem \ref{real-thm-b}.
By Remark \ref{remark-L-b} we obtain (\ref{L-e-1}) with $\mathcal{M}_{4B_0}^{L\e}$ in the 
place of $\mathcal{M}^\e_{4B_0}$.
However, it is not hard to see that $\mathcal{M}_{4B_0}^\e
\le L^d  \mathcal{M}_{4B_0}^{L\e}$.
This completes the proof.
\end{proof}

We are now ready to give the proof of Theorem \ref{main-thm-2}.

\begin{proof}[\bf Proof of Theorem \ref{main-thm-2}]

By dilation we may assume diam$(\Omega)=1$.
We will verify Condition (2) in Theorem \ref{main-thm-1} for the operator $\mathcal{L}_\e$, assuming that the same condition
holds for operators $-\text{\rm div}(\overline{A}\nabla )$ with constant coefficients, where either $\overline{A}=\widehat{A}$ or $\overline{A}$
is obtained from $A$ by averaging over a ball.

Let $B_0=B(x_0, r_0)$, where $ 0< r_0< c_0$ and $c_0>0$  is sufficiently small.
We assume $x_0\in \partial \Omega$ (the interior case $4B_0\subset \Omega$ is similar).
Suppose that $\text{\rm div} (A(x/\e)\nabla u_\e)=0$ in $4B_0\cap \Omega$ and
$u_\e=0$ on $4B_0\cap \partial\Omega$.
We  need  to show that 
\begin{equation}\label{app-10-0}
\left(\fint_{B_{0} \cap \Omega}
|\nabla u_\e|^2\omega\right)^{1/2}
\le C
 \left(\fint_{ 4B_0\cap \Omega}
|\nabla u_\e|^2 \right)^{1/2}
\left(\fint_{B_{0}} \omega\right)^{1/2}.
\end{equation}
We first observe that if $\e\ge c_0$, the estimate (\ref{app-10-0}) follows  directly Theorem \ref{thm-local}.
To see this, we note that 
$$
\rho (r; A^\e)=\rho (r/\e; A) \le \rho(1; A)<\infty,
$$
where $\rho$ is  given by (\ref{VMO}).  Thus, by Theorems \ref{main-thm-1} and \ref{thm-local},
(\ref{app-10-0}) holds uniformly in $\e\ge c_0$.

Suppose $0<\e<c_0$.
It follows from the proof of Theorem \ref{thm-local} that if $B=B(y_0, r)$, where $r= \e$ and
either $y_0\in 2B_0\cap  \partial\Omega$  or $4B\subset \Omega$, then
\begin{equation}\label{s-e-1}
\fint_{B \cap \Omega}
|\nabla u_\e|^2 \omega\, dx
\le C \left (  \fint_{2B\cap \Omega} |\nabla u_\e|\, dx\right)^2
\fint_B \omega.
\end{equation}
This implies that
\begin{equation}\label{s-e-2}
\int_{B \cap \Omega}
|\nabla u_\e|^2 \omega\, dx
\le C \int_{2B\cap \Omega} 
\big\{ \mathcal{M}^\e_{4B_0} ( |\nabla u_\e| \chi_{4B_0\cap \Omega}) \big\}^2 \omega \, dx,
\end{equation}
where the operator $\mathcal{M}^\e_{4B_0} $ is defined by (\ref{maximal-e}).
By s simple covering argument we obtain 
\begin{equation}\label{s-e-3}
\aligned
\int_{B_0 \cap \Omega}
|\nabla u_\e|^2 \omega\, dx
 & \le C \int_{2B_0\cap \Omega} 
\big\{ \mathcal{M}^\e_{4B_0} ( |\nabla u_\e| \chi_{4B_0\cap \Omega}) \big\}^2 \omega \, dx\\
&\le C \int_{4B_0\cap \Omega} |\nabla u_\e|^2
\fint_{B_0} \omega,
\endaligned
\end{equation}
where we have used Lemma \ref{lemma-H-L} for the last  inequality.
\end{proof}

We end this section with a result for $C^1$ domains.

\begin{thm}\label{thm-4.30}
Suppose that $A=A(y)$ satisfies the ellipticity condition (\ref{ellipticity}), the periodicity condition (\ref{periodicity})
and $A\in \text{\rm VMO}(\mathbb{R}^d)$.
Let $\Omega$ be a bounded $C^1$ domain.
Let $u_\e\in H^1_0(\Omega)$ be a weak solution of (\ref{DP-0})
with $f\in L^\infty(\Omega)$.
Then for any $A_1$ weight $\omega$,
\begin{equation}\label{4.30-0}
\int_{\Omega} 
|\nabla u_\e|^2\omega^{\pm 1} \, dx
\le C \int_\Omega |f|^2\omega^{\pm 1} \, dx,
\end{equation}
where $C$ depends only  on $d$, $m$, the function $\rho$ in (\ref{VMO}), the $A_1$ constant of $\omega$,  and $\Omega$. 
\end{thm}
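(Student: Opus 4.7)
The plan is to invoke Theorem \ref{main-thm-2}, which reduces the weighted estimate for $\mathcal{L}_\e$ to the analogous estimate for constant-coefficient operators $-\text{\rm div}(\overline{A}\nabla)$ on $\Omega$, where $\overline{A}$ is either the homogenized matrix $\widehat{A}$ or an average of $A$ over a ball. Since every such $\overline{A}$ satisfies the ellipticity condition (\ref{ellipticity}), it is enough to establish, uniformly in such $\overline{A}$, the weighted $L^2$ estimate on the $C^1$ domain $\Omega$ with weights $\omega$ and $\omega^{-1}$ separately.

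For the weight $\omega$, I would appeal directly to Remark \ref{remark-C-1} applied to the constant-coefficient operator $-\text{\rm div}(\overline{A}\nabla)$. Since $\Omega$ is $C^1$ and $\overline{A}$ trivially lies in $\text{\rm VMO}(\mathbb{R}^d)$, the interior and boundary reverse H\"older estimates for solutions of $\text{\rm div}(\overline{A}\nabla u)=0$ hold for every exponent $p>2$, and combining them with H\"older's inequality and the self-improving property (\ref{RH}) of the $A_1$ weight $\omega$ verifies Condition (2) of Theorem \ref{main-thm-1}. Theorem \ref{main-thm-1} then produces the weighted estimate (\ref{w-e-1}) for $-\text{\rm div}(\overline{A}\nabla)$, and Theorem \ref{main-thm-2} transports it to $\mathcal{L}_\e$, giving (\ref{4.30-0}) with weight $\omega$.

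For the weight $\omega^{-1}$, I would argue by duality rather than attempt to apply Theorem \ref{main-thm-1} directly (since $\omega^{-1}$ is typically not in $A_1$). The adjoint $A^*$ satisfies the same hypotheses as $A$, so the step above, applied to $A^*$, furnishes for every $g\in L^\infty(\Omega)$ a solution $v_\e\in H^1_0(\Omega)$ of $-\text{\rm div}(A^*(x/\e)\nabla v_\e)=\text{\rm div}(g)$ satisfying $\int_\Omega|\nabla v_\e|^2\omega\,dx\le C\int_\Omega|g|^2\omega\,dx$. Pairing the equations for $u_\e$ and $v_\e$ gives $\int_\Omega \nabla u_\e\cdot g\,dx=\int_\Omega f\cdot\nabla v_\e\,dx$, whence by Cauchy--Schwarz against the factorization $\omega^{-1/2}\cdot\omega^{1/2}$,
\[
\Big|\int_\Omega \nabla u_\e\cdot g\,dx\Big|
\le \Big(\int_\Omega|f|^2\omega^{-1}\,dx\Big)^{1/2}
\Big(\int_\Omega|\nabla v_\e|^2\omega\,dx\Big)^{1/2}
\le C\Big(\int_\Omega|f|^2\omega^{-1}\Big)^{1/2}\Big(\int_\Omega|g|^2\omega\Big)^{1/2}.
\]
Taking the supremum over $g\in L^\infty(\Omega)$ with $\int_\Omega|g|^2\omega\,dx\le 1$ and using the duality $L^2(\omega^{-1})=(L^2(\omega))^*$ under the unweighted pairing yields $\int_\Omega|\nabla u_\e|^2\omega^{-1}\,dx\le C\int_\Omega|f|^2\omega^{-1}\,dx$.

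The only real verification in this plan is that the hypothesis of Theorem \ref{main-thm-2} is available for every averaged matrix $\overline{A}$ uniformly, which is precisely the content of Remark \ref{remark-C-1} and ultimately rests on the classical $W^{1,p}$ theory for constant-coefficient elliptic systems on $C^1$ domains for all $1<p<\infty$. I do not expect a substantive obstacle: the reduction through Theorems \ref{main-thm-1} and \ref{main-thm-2} is engineered exactly to convert such a local reverse H\"older bound for solutions with constant coefficients into the desired weighted estimate for $\mathcal{L}_\e$, and the passage from $\omega$ to $\omega^{-1}$ is a standard duality once the theorem is in hand for $A^*$.
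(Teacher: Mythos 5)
Your proof is correct, but it follows a different path from the paper's. The paper verifies Condition (2) of Theorem \ref{main-thm-1} \emph{directly for the oscillating operator} $\mathcal{L}_\e$: it cites the uniform-in-$\e$ $W^{1,p}$ theory of \cite{Shen-2008-W, Shen-2017-W} to get the reverse H\"older inequality $\big(\fint_{B\cap\Omega}|\nabla u_\e|^p\big)^{1/p}\le C\big(\fint_{2B\cap\Omega}|\nabla u_\e|^2\big)^{1/2}$ for all $p>2$ for local solutions of $\mathcal{L}_\e u_\e=0$ in the $C^1$ domain, combines this with H\"older and (\ref{RH}) to get (\ref{cond-1}), and then applies Theorem \ref{main-thm-1} once — Theorem \ref{main-thm-2} is never invoked. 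You instead route through Theorem \ref{main-thm-2}, verifying the hypothesis only for the constant-coefficient matrices $\overline{A}$ via Remark \ref{remark-C-1} (constant matrices being trivially VMO, and the constants uniform since every $\overline{A}$ inherits the ellipticity bound $\mu$). Your version has the virtue of being self-contained within the paper's own machinery — it needs only the classical constant-coefficient reverse H\"older estimates on $C^1$ domains rather than the external uniform $W^{1,p}$ results for $\mathcal{L}_\e$ — at the cost of carrying the full weight of the homogenization arguments of Sections \ref{section-small} and \ref{section-large}; the paper's version is shorter given those references. Both proofs finish with the same duality step for $\omega^{-1}$ (yours correctly spells out the pairing identity $\int_\Omega g\cdot\nabla u_\e=\int_\Omega f\cdot\nabla v_\e$ and the isometry $L^2(\omega^{-1})=(L^2(\omega))^*$ under the unweighted pairing, which the paper leaves implicit), so I see no gap.
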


\begin{proof}
Let $B=B(x_0, r)$ with a ball with the properties that $|B|\le c_1 |\Omega|$
and either $4B\subset \Omega$ or $x_0\in \partial\Omega$.
Let $u_\e$ be a weak solution of
$\text{\rm div}(A(x/\e)\nabla u_\e)=0$ in $4B\cap \Omega$ with
$u_\e=0$ on $4B\cap \partial\Omega$ (in the case $x_0\in \partial\Omega$).
It follows from \cite{Shen-2008-W, Shen-2017-W} that
$$
\left(\fint_{B\cap \Omega}
|\nabla u_\e|^p \right)^{1/p}
\le C \left(\fint_{2B\cap \Omega} 
|\nabla u_\e|^2 \right)^{1/2}
$$
for any $p>2$.
By H\"older's inequality and (\ref{RH}),
this gives  the inequality (\ref{cond-1})  for any $\omega\in A_1(\mathbb{R}^d)$.
By Theorem \ref{main-thm-1}  we obtain  (\ref{4.30-0}) for $\omega^{+1}$.
Since $A^*$ satisfies the same conditions as $A$, the case $\omega^{-1}$ follows by
duality.
\end{proof}

If $\Omega$ is a Lipschitz domain,
the  inequality (\ref{4.30-0}) may not be true for all $A_1$ weights, as 
this would imply the $W^{1, p}$ estimate $\|\nabla u_\e\|_{L^p(\Omega)}
\le C_p \| f\|_{L^p(\Omega)}$ for  $1<p<\infty$, by a general extrapolation result.

%%%%%%%%%%%%%%%%%%%%%%%%%%%%%%%%%%%

\section{Proof of Theorem \ref{main-thm-3}}\label{section-sp}

In this section we consider  the weight $
\omega_\sigma   (x)=[\text{\rm dist} (x, \partial\Omega)]^\sigma $,
where $-1< \sigma <1$ and $\Omega$ is a  Lipschitz domain.
Recall that  $w_\sigma $ is an $A_1$ weight if $-1<\sigma  \le 0$,
and $\omega_\sigma  \in A_p(\mathbb{R}^d)$ for $p>1$ if $-1< \sigma < p-1$.

\begin{lemma}\label{lemma-lip-1}
Let  $A$ be  a constant matrix satisfying  (\ref{ellipticity}) and
 $\Omega$  a bounded Lipschitz domain.
Also assume that $A^*=A$ if $m\ge 2$.
Let $u\in H^1_0(\Omega) $ be a weak solution of
$-\text{\rm div}(A\nabla u) =\text{\rm div}(f)$ in $\Omega$,
where $f\in L^\infty(\Omega)$.
Then, for any $-1< \sigma <1$,
\begin{equation}\label{lip-1}
\int_\Omega 
|\nabla u |^2 [\text{\rm dist}(x, \partial\Omega)]^\sigma\, dx
\le C\int_\Omega
 |f|^2 [\text{\rm dist}(x, \partial\Omega)]^\sigma \, dx,
\end{equation}
 where
$C$ depends only on $d$, $m$, $\mu$, $\sigma$ and the Lipschitz character of $\Omega$.
\end{lemma}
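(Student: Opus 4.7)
The plan is to invoke Theorem~\ref{main-thm-1} after a duality reduction exploiting $A^*=A$. Since $A$ is symmetric (automatic for $m=1$; assumed for $m\ge 2$), the solution operator $T\colon f\mapsto \nabla u$ for the Dirichlet problem (\ref{DP-1}) is self-adjoint in the unweighted pairing on $L^2(\Omega;\mathbb{R}^{md})$: expanding $\int_\Omega Tf\cdot g\,dx$ through the Dirichlet bilinear form and using $u,v\in H^1_0(\Omega)$ yields $\int_\Omega Tf\cdot g\,dx=\int_\Omega f\cdot Tg\,dx$. Consequently, boundedness of $T$ on $L^2(\Omega,\omega_\sigma\,dx)$ is equivalent to boundedness on $L^2(\Omega,\omega_\sigma^{-1}\,dx)=L^2(\Omega,\omega_{-\sigma}\,dx)$. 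It therefore suffices to prove (\ref{lip-1}) in the range $\sigma\in(-1,0]$, where $\omega_\sigma$ is an $A_1$ weight on $\mathbb{R}^d$ and Theorem~\ref{main-thm-1} is applicable.

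With $\sigma\in(-1,0]$ fixed, Theorem~\ref{main-thm-1} reduces the lemma to verifying the local reverse-H\"older-type condition (\ref{cond-1}) for $\omega=\omega_\sigma$ and the constant-coefficient operator $-\text{\rm div}(A\nabla\,\cdot\,)$. The interior case $4B\subset\Omega$ is immediate: on $2B$, the distance function $\text{\rm dist}(\cdot,\partial\Omega)$ varies by a factor at most $3$, so $\omega_\sigma$ is comparable to $\fint_B\omega_\sigma$ there, and (\ref{cond-1}) collapses to the trivial monotonicity $\fint_B|\nabla u|^2\le 2^d\fint_{2B}|\nabla u|^2$. The boundary case $B=B(x_0,r)$ with $x_0\in\partial\Omega$ is the substantive step. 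Here I would combine three ingredients: (i) the reverse H\"older estimate $(\fint_{B\cap\Omega}|\nabla u|^p)^{1/p}\le C(\fint_{2B\cap\Omega}|\nabla u|^2)^{1/2}$ for some $p>2$, available for symmetric constant-coefficient elliptic systems in Lipschitz domains (Meyers, Jerison--Kenig, Dahlberg); (ii) the $A_1$ lower bound $\fint_B\omega_\sigma\gtrsim \inf_B\omega_\sigma\approx r^\sigma$; and (iii) the sharp weighted Hardy inequality $\int_\Omega|u|^2[\text{\rm dist}(x,\partial\Omega)]^{\sigma-2}\,dx\le C\int_\Omega|\nabla u|^2[\text{\rm dist}(x,\partial\Omega)]^\sigma\,dx$, valid in Lipschitz domains for $|\sigma|<1$.

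The main obstacle is extending the admissible range of $\sigma$ beyond what a naive H\"older step yields. Combining reverse H\"older with exponent $p$ and local integrability of $\omega_\sigma^{p/(p-2)}$ requires $p>2/(1-|\sigma|)$, and since $p$ is bounded in Lipschitz domains (e.g.\ $p<3+\gamma$ for the scalar Laplacian in $d\ge 3$), this covers only $|\sigma|<(p-2)/p$, well short of the claimed range. To reach $|\sigma|<1$, I would localize the testing argument to $2B\cap\Omega$ by choosing $\phi=u\eta^2[\text{\rm dist}(\cdot,\partial\Omega)]^\sigma$ with a suitable cutoff $\eta$, and use the weighted Hardy inequality to absorb the cross terms generated by $\nabla[\text{\rm dist}^\sigma]$ via Young's inequality. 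The Lipschitz structure of $\Omega$ is exactly what makes the weighted Hardy inequality sharp up to $|\sigma|<1$, matching the desired range. The remaining bookkeeping is a Whitney-type decomposition near $\partial\Omega$ to pass from the resulting weighted Caccioppoli estimate back to the local condition~(\ref{cond-1}).
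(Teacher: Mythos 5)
Your reduction is the same as the paper's: use $A^*=A$ to dualize and restrict to $\sigma\in(-1,0]$ where $\omega_\sigma\in A_1$, invoke Theorem \ref{main-thm-1}, dispose of the interior case by comparability of the weight on $2B$, and concentrate on verifying (\ref{cond-1}) at the boundary. You also correctly diagnose that the naive route (reverse H\"older with exponent $p$ plus H\"older against $\omega_\sigma^{p/(p-2)}$) only reaches $|\sigma|<(p-2)/p$. The problem is with your proposed remedy.

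The weighted Caccioppoli argument with test function $u\eta^2[\mathrm{dist}(\cdot,\partial\Omega)]^\sigma$, followed by Young's inequality and the weighted Hardy inequality to absorb the cross terms, does not give the full range $|\sigma|<1$. The cross term produced by $\nabla[\mathrm{dist}^\sigma]$ is of size $|\sigma|\int|u|\,|\nabla u|\,\mathrm{dist}^{\sigma-1}$; after Cauchy--Schwarz and Hardy it returns a multiple $C\,|\sigma|^2 C_H$ of the gradient term, and absorption requires this product to be less than $1$. Since $C_H$ depends on the ellipticity ratio and the Lipschitz character and is not close to the sharp one-dimensional constant, this only works for $|\sigma|$ small. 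This is exactly the content of the paper's Theorem \ref{g-thm}, which is proved by precisely your argument and yields only $|\sigma|\le 2\kappa$ with $\kappa$ small --- notably \emph{without} any symmetry hypothesis, which is a sign that this soft argument cannot be the mechanism behind the sharp range. Your assertion that the sharpness of the weighted Hardy inequality up to $|\sigma|<1$ transfers to the absorption step is the gap: sharpness of Hardy controls when the right-hand side is finite, not the size of the constant you must beat in the absorption.

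What the paper actually does at the boundary is quantitatively different: writing $\partial\Omega$ locally as a Lipschitz graph, it invokes the $L^2$ nontangential-maximal-function (Rellich-type) estimate
\begin{equation*}
\int_{B\cap\partial\Omega}\bigl|(\nabla u)^*_{cr}\bigr|^2\,d\sigma\le \frac{C}{r}\int_{2B\cap\Omega}|\nabla u|^2\,dx
\end{equation*}
for solutions of symmetric constant-coefficient systems vanishing on $4B\cap\partial\Omega$ (this is where $A^*=A$ is genuinely used, via the Rellich identity, and it is a theorem of Dahlberg--Kenig--Verchota type, not an energy estimate). One then splits $\fint_{B\cap\Omega}|\nabla u|^2\omega_\sigma$ into a boundary layer, where the weight is integrated in the normal variable against the nontangential maximal function --- giving a factor $r^{1+\sigma}$ for every $\sigma>-1$ --- and the complementary region where $\omega_\sigma\le Cr^\sigma$. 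This is the step that delivers the full range $-1<\sigma<0$, and it has no analogue in your proposal. To repair your argument you would need to import this boundary harmonic-analysis input; the Caccioppoli--Hardy machinery alone cannot produce it.
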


\begin{proof}
We may assume $-1<\sigma <0$; the case $0<\sigma <1$ follows by duality.
Since $\omega_\sigma\in A_1(\mathbb{R}^d)$,
 by Theorems  \ref{main-thm-1},
  we only need to check the condition (\ref{cond-1}).

Let $B=B(x_0, r)$ be a ball with $|B|\le c_1|\Omega|$.
The case $4B\subset \Omega$ is trivial.
To treat the case $x_0\in \partial\Omega$,
we assume that $\text{\rm div}({A} \nabla u)=0$ in $4B \cap \Omega$ and
$u=0$ on $ 4B \cap \partial\Omega$.
Without loss of generality, we may  assume that
$$
\Omega\cap 10B
=\big\{
(x^\prime,  x_d)\in \mathbb{R}^d: \
x_d >\psi (x^\prime) \big\},
$$
where $\psi: \mathbb{R}^{d-1} \to \mathbb{R}$ is a Lipschitz function with $\psi (0)=0$.
Let
$$
(\nabla u)_r^*(x^\prime, \psi(x^\prime))
=\sup \Big\{
|\nabla u(x, \psi(x^\prime) +s)|: \ 0<s<r \Big\}.
$$
It follows from the nontangential-maximal-function estimates (see e.g. \cite{Kenig-book, Shen-book}) that
$$
\int_{B\cap \partial\Omega}
|(\nabla u)^*_{c r} |^2\, d\sigma
\le \frac{C}{r}
\int_{2B\cap \Omega }
|\nabla u|^2\, dx.
$$
We point out  that  if $m\ge 2$,
the estimate above requires the condition $A^*=A$.
In the scalar case $m=1$, the symmetry condition is not needed,
as one may write $\text{\rm div}(A\nabla u) =(1/2)  \text{\rm div} ((A+A^*)\nabla u))$.
Hence,
$$
\aligned
\fint_{B\cap \Omega}
|\nabla u|^2 \omega_\sigma \, dx
&\le 
C r^{1+ \sigma -d}
\int_{B \cap  \partial\Omega} 
|(\nabla u)^*_{cr} |^2\, d\sigma
+ Cr^{\sigma }  \fint_{2B\cap \Omega}
|\nabla u|^2\, dx\\
&\le Cr^{\sigma }  \fint_{2B\cap \Omega}
|\nabla u|^2\, dx\\
& \le C \fint_{2B\cap \Omega} |\nabla u|^2\, dx
\cdot \fint_B \omega_\sigma,
\endaligned
$$
which gives (\ref{cond-1})
\end{proof}

\begin{proof}[\bf Proof of Theorem \ref{main-thm-3}]

By duality we may assume $-1<\sigma<0$.
Since $\omega_\sigma \in A_1(\mathbb{R}^d)$,
in view of Theorem \ref{main-thm-2},
it suffices to establish the weighted estimate (\ref{w-e-1}) with $\omega=\omega_\sigma$
for weak solutions in $H^1_0(\Omega)$  of
$-\text{\rm div} (\overline{A}\nabla u)=\text{\rm div}(f)$ in $\Omega$,
where  the constant matrix $\overline{A}$ is either $\widehat{A}$ or obtained from $A$ by averaging over a ball.
But this is already done in Lemma \ref{lemma-lip-1}. Indeed,
 in both case, $\overline{A}$ satisfies (\ref{ellipticity}).
Also, since $A$ is symmetric for $m\ge 2$, 
so  is  $\overline{A}$.
\end{proof}

We end this section with a weighted inequality with the weight $\omega_\alpha$
for any matrix satisfying (\ref{ellipticity}).
The inequality was used in the proof of Lemma \ref{app-lemma-3}.

\begin{thm}\label{g-thm}
Let $A$   be a matrix satisfying (\ref{ellipticity}) and $\Omega$ a bounded Lipschitz domain.
Let $u\in H^1_0(\Omega)$ be a weak solution of
$-\text{\rm div}(A\nabla u) =\text{\rm div} (f)$ in $\Omega$,
where $f\in L^2(\Omega)$.
Then there exists $\kappa\in (0, 1/2)$, depending only on $d$, $m$, $\mu$ and
the Lipschitz character of $\Omega$, such that  for $|\sigma  |\le  2\kappa$,
\begin{equation}\label{g-w}
\int_\Omega
|\nabla u|^2 [ \text{\rm dist} (x, \partial\Omega)]^\sigma \, dx
\le  C \int_\Omega
|f|^2 [ \text{\rm dist} (x, \partial\Omega)]^\sigma \, dx,
\end{equation}
where 
$C>0 $ depend only on $d$, $m$,  $\mu$ and the Lipschitz character of $\Omega$.
\end{thm}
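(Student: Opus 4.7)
The plan is to deduce the theorem from Theorem \ref{main-thm-1}, with the reverse-H\"older condition (\ref{cond-1}) verified by combining the Meyers--Gehring self-improvement for $|\nabla u|$ in Lipschitz domains with a direct computation of averages of the weight $\omega_\sigma = [\text{\rm dist}(\cdot,\partial\Omega)]^\sigma$. Since $A^*$ satisfies the same ellipticity constants as $A$, a standard testing duality (exactly as at the end of the proof of Lemma \ref{lemma-lip-1}) shows that the weighted inequality (\ref{g-w}) for $A$ with weight $\omega_\sigma$, $0\le \sigma\le 2\kappa$, follows from the same inequality applied to $A^*$ with weight $\omega_{-\sigma}$. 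Thus it suffices to treat the range $-2\kappa \le \sigma \le 0$, where $\omega_\sigma \in A_1(\mathbb{R}^d)$ with constant controlled by the Lipschitz character of $\Omega$, and Theorem \ref{main-thm-1} reduces the problem to verifying (\ref{cond-1}).

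Fix $B = B(x_0,r)$ as in Theorem \ref{main-thm-1}(2) and let $u$ be the corresponding solution of $\text{\rm div}(A\nabla u)=0$ on $4B\cap\Omega$, vanishing on $4B\cap\partial\Omega$ in the boundary case. The classical Meyers--Gehring argument (the interior version, plus the boundary version via extension by zero and a Caccioppoli inequality on balls centered on $\partial\Omega$) yields an exponent $p_0>2$, depending only on $d$, $m$, $\mu$ and the Lipschitz character of $\Omega$, for which
$$\left(\fint_{B\cap\Omega} |\nabla u|^{p_0}\right)^{1/p_0} \le C\left(\fint_{2B\cap\Omega}|\nabla u|^2\right)^{1/2}.$$
Setting $q := p_0/(p_0-2) > 1$ and applying H\"older's inequality with exponents $p_0/2$ and $q$ gives
$$\fint_{B\cap\Omega}|\nabla u|^2 \omega_\sigma \, dx \le \left(\fint_{B\cap\Omega}|\nabla u|^{p_0}\right)^{2/p_0}\left(\fint_{B\cap\Omega}\omega_\sigma^q\right)^{1/q} \le C\fint_{2B\cap\Omega}|\nabla u|^2 \cdot \left(\fint_B \omega_\sigma^q\right)^{1/q}.$$

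Now choose $\kappa := (p_0-2)/(4p_0) \in (0,1/2)$; the hypothesis $|\sigma|\le 2\kappa$ then forces $|\sigma q|\le 1/2$, so $\omega_\sigma^q = \omega_{\sigma q}$ is again a distance-to-boundary power with exponent in $(-1,1)$. A Lipschitz graph parametrization of $\partial\Omega$ near $B$ gives, for every $\alpha \in (-1,1)$,
$$\fint_B [\text{\rm dist}(x,\partial\Omega)]^\alpha\, dx \approx \big(\max\{r,\ \text{\rm dist}(x_0,\partial\Omega)\}\big)^\alpha,$$
with implicit constants depending only on $\alpha$ and the Lipschitz character of $\Omega$. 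Applying this to $\alpha = \sigma q$ and then to $\alpha = \sigma$ produces the reverse-H\"older-type inequality $(\fint_B \omega_\sigma^q)^{1/q} \le C\fint_B \omega_\sigma$, which combined with the previous display yields exactly (\ref{cond-1}) and, via Theorem \ref{main-thm-1}, completes the proof.

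The main obstacle is essentially bookkeeping rather than conceptual: both the boundary Meyers--Gehring self-improvement and the two-sided size estimate for $\fint_B \omega_\alpha$ are standard in the Lipschitz setting, but one must verify that the final exponent $\kappa = (p_0-2)/(4p_0)$ depends only on $d$, $m$, $\mu$ and the Lipschitz character of $\Omega$; this is automatic since the Meyers exponent $p_0$ already has that dependence.
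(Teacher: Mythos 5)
Your argument is correct, but it is a genuinely different proof from the one in the paper. The paper proves Theorem \ref{g-thm} by a direct, self-contained energy argument: one tests the equation against $u\,\psi_t^{2\sigma}$ with $\psi_t=\mathrm{dist}(\cdot,\partial\Omega)+t$, uses Hardy's inequality $\int_\Omega |u|^2[\mathrm{dist}(x,\partial\Omega)]^{-2}\,dx\le C\int_\Omega|\nabla u|^2\,dx$ to control the commutator terms, absorbs them under the smallness condition $C|\sigma|^2\le 1/2$ (which is where $\kappa$ comes from), and lets $t\to 0^+$ via Fatou; no real-variable machinery and no Meyers estimate is needed. You instead route the statement through Theorem \ref{main-thm-1}, verifying the reverse H\"older condition (\ref{cond-1}) by combining the boundary Meyers--Gehring exponent $p_0>2$ with the elementary estimate $\fint_B\omega_\alpha\approx(\max\{r,\mathrm{dist}(x_0,\partial\Omega)\})^\alpha$ for $\alpha\in(-1,1)$, which yields the needed reverse H\"older inequality for $\omega_\sigma$ of order $q=p_0/(p_0-2)$ once $|\sigma q|\le 1/2$; the case $\sigma>0$ is then recovered by the same duality the paper uses elsewhere. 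This is legitimate --- there is no circularity, since the proof of Theorem \ref{main-thm-1} nowhere invokes Theorem \ref{g-thm} (which enters the paper only in Lemma \ref{app-lemma-3}) --- and your $\kappa=(p_0-2)/(4p_0)$ depends only on the permitted parameters. Your route has the virtue of being structurally identical to Lemma \ref{lemma-lip-1} (the constant-coefficient, full-range $|\sigma|<1$ version, where the Meyers exponent is replaced by nontangential-maximal-function estimates), which makes clear exactly why only a small range of $\sigma$ survives for rough coefficients; the paper's route is lighter and keeps this auxiliary theorem independent of the main machinery. Two small points you should make explicit: the passage from $f\in L^\infty$ (as in Theorem \ref{main-thm-1}(1)) to $f\in L^2$ with finite weighted norm requires the routine truncation/Fatou density argument, and the average in (\ref{cond-1}) is $\fint_B\omega$ over the full ball while your H\"older step produces $\fint_{B\cap\Omega}\omega_\sigma^q$, so one uses $|B\cap\Omega|\ge c|B|$ for Lipschitz $\Omega$ to pass between the two.
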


\begin{proof}
The result  is probably well known. We provide a proof here for reader's convenience.
By duality we may assume $\sigma<0$.
The proof uses Hardy's  inequality,
\begin{equation}\label{Hardy}
\int_\Omega
|u|^2 [\text{\rm dist} (x, \partial\Omega) ]^{-2} \, dx
\le C \int_\Omega |\nabla u|^2 \, dx
\end{equation}
for any $u\in H_0^1(\Omega)$,
where   $C$ depends only on $d$ and the Lipschitz character of $\Omega$.
Now, let  $u\in H^1_0(\Omega)$ be a weak solution of
$-\text{\rm div}(A\nabla u) =\text{\rm div} (f)$ in $\Omega$,
where $f\in L^2(\Omega)$.
Let
$
\psi_t  (x) = \text{\rm dist} (x, \partial\Omega) + t,
$
where $t>0$.
Note that $|\nabla \psi_t |\le 1$.
Using 
$$
\int_{\Omega} A\nabla u \cdot \nabla (u \psi_t^{2\sigma })\, dx
=-\int_\Omega f \cdot \nabla (u \psi_t^{2\sigma } )\, dx
$$
and the Cauchy inequality,
we obtain
\begin{equation}\label{g-1}
\aligned
\int_\Omega |\nabla u|^2 \psi_t^{2\sigma } \, dx
 & \le C \int_\Omega |u|^2 |\nabla \psi_t^{\sigma }|^2\, dx
+ C \int_\Omega
|f|^2 \psi_t^{2\sigma}\,dx\\
  & \le C |\sigma|^2  \int_\Omega |u|^2 | \psi_t^{\sigma-1 }|^2\, dx
+ C \int_\Omega
|f|^2 \psi_t^{2\sigma}\,dx,
\endaligned
\end{equation}
where $C$ depends only on $\mu$.
Since $u\psi_t^\sigma \in H_0^1(\Omega)$, by Hardy's inequality (\ref{Hardy}),
$$
\aligned
 \int_\Omega |u|^2 | \psi_t^{\sigma-1 }|^2\, dx
 &\le \int_\Omega |u \psi_t ^\sigma|^2 [\text{\rm dist} (x, \partial\Omega) ]^{-2}\, dx\\
 & \le C \int_\Omega 
 |\nabla (u \psi_t^\sigma)|^2\, dx\\
 &\le C \int_\Omega |\nabla u|^2 \psi_t^{2\sigma} \, dx
 + C |\sigma|^2 \int_\Omega |u |^2 | \psi_t ^{\sigma -1}|^2\, dx,
\endaligned
$$
where $C$ depends only on $d$ and  the Lipschitz character of $\Omega$.
Thus, if  $C|\sigma|^2\le (1/2)$, 
$$
 \int_\Omega |u|^2 | \psi_t^{\sigma-1 }|^2\, dx
\le  C \int_\Omega |\nabla u|^2 \psi_t^{2\sigma} \, dx.
$$
This, together with (\ref{g-1}), gives
$$
\int_\Omega |\nabla u|^2 \psi_t^{2\sigma } \, dx
\le  C |\sigma|^2 \int_\Omega |\nabla u|^2 \psi_t^{2\sigma } \, dx
+ C \int_\Omega
|f|^2 \psi_t^{2\sigma}\,dx.
$$
Again, if $ C |\sigma|^2\le (1/2)$, then 
$$
\int_\Omega |\nabla u|^2 \psi_t^{2\sigma } \, dx
\le C \int_\Omega
|f|^2 \psi_t^{2\sigma}\,dx.
$$
As a result, we have proved that if $C|\sigma|^2 \le (1/2)$, where $\sigma<0$ and $C$ depends on  $d$, $\mu$ and the Lipschitz character of $\Omega$,
$$
\int_\Omega |\nabla u|^2 \psi_t^{2\sigma } \, dx
\le C \int_\Omega
|f|^2 [\text{\rm dist}(x, \partial\Omega) ]^{2\sigma}\,dx.
$$
By letting $t\to 0^+$ and using Fatou's Lemma we obtain (\ref{g-w}).
\end{proof}

 \bibliographystyle{amsplain}
 
\bibliography{S20201.bbl}

\bigskip

\begin{flushleft}

Zhongwei Shen,
Department of Mathematics,
University of Kentucky,
Lexington, Kentucky 40506,
USA.

E-mail: zshen2@uky.edu
\end{flushleft}

\bigskip

\end{document}